\setlist[enumerate,1]{label=(\roman*), font = \normalfont} 
\let\originalleft\left
\let\originalright\right
\renewcommand{\left}{\mathopen{}\mathclose\bgroup\originalleft}
\renewcommand{\right}{\aftergroup\egroup\originalright}
\newlength{\bibitemsep}
\newlength{\bibparskip}\setlength{\bibparskip}{0pt}
\let\oldthebibliography\thebibliography
\renewcommand\thebibliography[1]{\oldthebibliography{#1}
  \setlength{\parskip}{\bibitemsep}
  \setlength{\itemsep}{\bibparskip}}
\newcommand{\N}{\mathbb{N}}
\newcommand{\Q}{\mathbb{Q}}
\newcommand{\R}{\mathbb{R}}
\newcommand{\C}{\mathbb{C}}
\newcommand{\T}{\mathbb{T}}
\renewcommand{\P}{\mathbb{P}}
\newcommand{\E}{\mathbb{E}}
\newcommand{\Ec}[1]{\mathbb{E} \left[#1\right]}
\newcommand{\Pp}[1]{\mathbb{P} \left(#1\right)}
\newcommand{\Ecsq}[2]{\mathbb{E} \left[#1\middle|#2\right]}
\newcommand{\Ppsq}[2]{\mathbb{P} \left(#1\middle|#2\right)}
\newcommand{\Varsq}[2]{\Var \left(#1\middle|#2\right)}
\newcommand{\Eci}[2]{\mathbb{E}_{#1} \left[#2\right]}
\newcommand{\1}{\mathbbm{1}}
\newcommand{\cN}{\mathcal{N}}
\newcommand{\cC}{\mathcal{C}}
\newcommand{\cL}{\mathcal{L}}
\newcommand{\cS}{\mathcal{S}}
\newcommand{\sF}{\mathscr{F}}
\newcommand{\ep}{\varepsilon}
\newcommand{\e}{\mathrm{e}}
\newcommand{\diff}{\mathop{}\mathopen{}\mathrm{d}}
\DeclareMathOperator{\Var}{Var}
\DeclareMathOperator{\sinhc}{sinhc}
\newcommand{\abs}[1]{\left\lvert#1\right\rvert}
\newcommand{\abso}[1]{\lvert#1\rvert}
\newcommand{\norme}[1]{\left\lVert#1\right\rVert}
\newcommand\relphantom[1]{\mathrel{\phantom{#1}}}
\theoremstyle{plain}  
\newtheorem{thm}{Theorem}[section]
\newtheorem{prop}[thm]{Proposition}
\newtheorem{lem}[thm]{Lemma}
\newtheorem{cor}[thm]{Corollary}
\newtheorem{conj}[thm]{Conjecture}
\theoremstyle{definition}
\theoremstyle{remark}
\newtheorem{rem}[thm]{Remark}
\title{1-stable fluctuations in branching Brownian motion at critical temperature II: general functionals}
\author{Pascal \bsc{Maillard}%
\footnote{Univ Toulouse, Institut Universitaire de France, INUC, UT2J, INSA Toulouse, TSE, CNRS, IMT, Toulouse, France. PM acknowledges partial support from Institut Universitaire de France, the MITI interdisciplinary program 80PRIME GEx-MBB and from ANR through the MBAP-P (ANR-24-CE40-1833) project.}~
and Michel \bsc{Pain}%
\footnote{Univ Toulouse, INUC, UT2J, INSA Toulouse, TSE, CNRS, IMT, Toulouse, France. MP acknowledges partial support from the MITI interdisciplinary program 80PRIME GEx-MBB and from ANR through the MBAP-P (ANR-24-CE40-1833) project.}}
\date{February 4, 2026}
\begin{document}

\maketitle

\begin{abstract}
\noindent Let $\mu_t$ denote the critical derivative Gibbs measure of branching Brownian motion at time~$t$.
It has been proved by Madaule (\textit{Stochastic Process. Appl.} \textbf{126} (2016), no. 2, 470--502) and Maillard and Zeitouni (\textit{Ann. Inst. Henri Poincaré Probab. Stat.} \textbf{52} (2016), no. 3, 1144--1160) that $\mu_t$ converges weakly to the random measure $Z_\infty \sqrt{2/\pi} x^2 \e^{-x^2/2} \1_{x >0} \diff x$, where $Z_\infty$ is the limit of the derivative martingale.
In this paper, we are interested in the fluctuations that occur in this convergence and prove for a large class of functions $F$ that
\begin{align*}
	& \sqrt{t} \left( 
	\int_\R F \diff \mu_t 
	- Z_\infty \int_0^\infty F(x) \sqrt{\frac{2}{\pi}} x^2 \e^{-x^2/2} \diff x
	- \frac{c(F) \log t}{\sqrt{t}} Z_\infty \right) \\
	& \xrightarrow[t\to\infty]{} S(F),
	\quad \text{in law},
\end{align*}
where $c(F)$ is a constant depending on $F$ and, given $Z_\infty$, $S(F)$ has an explicit 1-stable distribution.
Moreover, we extend this result to a functional convergence, and we identify precisely the particles responsible for the fluctuations.
In particular, this proves the following result for the critical additive martingale $(W_t)_{t\geq 0}$: 
\[
\sqrt{t} \left( \sqrt{t} W_t - \sqrt{\frac{2}{\pi}} Z_\infty \right)
\xrightarrow[t\to\infty]{} C Z_\infty,
\quad \text{in law},
\]
where here $C$ is a Cauchy variable independent of $Z_\infty$, 
confirming a conjecture by Mueller and Munier (\textit{Phys. Rev. E} \textbf{90} (2014), 042143) in the physics literature.
\end{abstract}


{
	\hypersetup{linkcolor=black}
	\tableofcontents
}

\section{Introduction}

In this paper, we are interested in fluctuations arising in the front of branching Brownian motion, where here the front refers to the particles supporting the critical Gibbs measure or equivalently those which are at a distance of order $\sqrt{t}$ above the minimum.
This is the sequel of \cite{maillardpain2019} by the authors, where the fluctuations of the derivative martingale, a key quantity describing the front, were obtained. 
Here, we investigate the fluctuations of general functionals of the front and show that they are 1-stable with possibly any value of the asymmetry parameter, whereas for the derivative martingale they were totally asymmetric.

\subsection{Definitions and results}

\paragraph{Definitions.}
Branching Brownian motion (BBM) is a branching Markov process defined as follows. 
Initially, there is a single particle at the origin. 
Each particle moves according to a Brownian motion with variance $\sigma^2>0$ and drift $\rho \in \R$, during an exponentially distributed time of parameter $\lambda > 0$ and then splits into a random number of new particles, chosen accordingly to a reproduction law~$\mu$.
These new particles start the same process from their place of birth, behaving independently of the others. 
The system goes on indefinitely, unless there is no particle at some time.


Let $L$ denote a random variable on $\N \coloneqq \{ 0, 1, \dots \}$ with law $\mu$.
Our assumptions in this paper concerning the reproduction law are
\begin{equation} \label{hypothese 1}
\Ec{L} >1
\quad \text{ and } \quad
\Ec{L^2} < \infty.
\end{equation}
The first inequality implies that the underlying Galton-Watson tree $\T$ is supercritical and the event 
of non-extinction of the population has positive probability.

Let $\cN(t)$ be the set of particles alive at time $t$ and $X_u(t)$ the position of particle $u$ at time~$t$.
More generally, for a particle $u \in \cN(t)$ and $s \leq t$, we write $X_u(s)$ for the position at time $s$ of the single ancestor of $u$ alive at time $s$.
We denote by $(\sF_t)_{t\geq 0}$ the canonical filtration associated with the BBM.
As in \cite{maillardpain2019} and in the branching random walk literature \cite{aidekon2013,aidekonshi2014}, we take $\sigma = \rho = 1$ and $\lambda = 1/(2\E[L-1])$ in order to have, for any $t >0$,
\begin{align*}
\Ec{\sum_{u \in \cN(t)} \e^{-X_u(t)}} = 1,
\quad \Ec{\sum_{u \in \cN(t)} X_u(t) \e^{-X_u(t)}} = 0
\quad \text{and} \quad
\Ec{\sum_{u \in \cN(t)} X_u(t)^2 \e^{-X_u(t)}} = t.
\end{align*}
Moreover, we define the \textit{critical additive martingale}
\begin{align*}
W_t \coloneqq \sum_{u \in \cN(t)} \e^{-X_u(t)},
\quad t \geq 0,
\end{align*}
which converges a.s.\@ to 0 \cite{neveu87}, and the \textit{derivative martingale}
\begin{align*}
Z_t \coloneqq \sum_{u \in \cN(t)} X_u(t) \e^{-X_u(t)}, 
\quad t \geq 0.
\end{align*}

\paragraph{Background.}
It has been proved by Lalley and Sellke \cite{lalleysellke87} for binary branching and then by Yang and Ren \cite{yangren2011} under the optimal assumption $\E[L \log_+^2 L] < \infty$ that
the derivative martingale converges a.s.\@ to a limit $Z_\infty$ which is positive on the non-extinction event.
Moreover, it has been proved for the branching random walk by Aïdékon and Shi \cite{aidekonshi2014} that after a proper renormalization, the critical additive martingale converges also to the same limit:
\begin{equation} \label{eq:cv-of-W_t}
\sqrt{t} W_t \xrightarrow[t\to\infty]{} \sqrt{\frac{2}{\pi}} Z_\infty,
\quad \text{in probability},
\end{equation}
and this result extends to the case of BBM.

The study of BBM has been mainly focused on the behavior of the extremal particles (those which are at a distance of order $1$ from the minimum of BBM): Bramson \cite{bramson78,bramson83} and Lalley and Sellke \cite{lalleysellke87} proved the following convergence for the position of the lowest particle of BBM
\begin{align} \label{eq:cv-du-min-du-BBM}
\Pp{\min_{u\in\cN(t)} X_u(t) \geq \frac{3}{2} \log t + x} 
\xrightarrow[t\to\infty]{} 
\Ec{\e^{-c^* \e^x Z_\infty}},
\end{align}
for some positive constant $c^*$, and Aïdékon, Berestycki, Brunet and Shi \cite{abbs2013}, Arguin, Bovier and Kistler \cite{abk2013} described the limit of the whole extremal process.
One can note that the derivative martingale plays a role in the behavior of the extremal particles. 
Indeed, for any $t \leq s$ such that $t$ and $s-t$ are large, extremal particles at time $s$ descend of the particles at time $t$ that mainly contribute to $Z_t$: these are the particles with a position of order $\sqrt{t}$ at time $t$ and we call them the \textit{front} of the BBM. 
A way to pick a typical particle of the front is to consider the \textit{critical derivative Gibbs measure} defined by
\[
\sum_{u \in \cN(t)} X_u(t) \e^{-X_u(t)} \delta_{X_u(t)/\sqrt{t}}.
\]
Note that this is not a probability measure, but it has total weight $Z_t$ which is of order $1$ on the non-extinction event.
This measure converges weakly as $t \to \infty$: more precisely, Madaule \cite{madaule2016} (for the branching random walk) and Maillard and Zeitouni \cite{maillardzeitouni2016} proved that, for any $F \colon \R \to \R$ continuous and bounded, we have
\begin{equation} \label{eq:cv-Z_t(F)}
Z_t(F) 
\coloneqq \sum_{u \in \cN(t)} X_u(t) \e^{-X_u(t)} 
	F \left( \frac{X_u(t)}{\sqrt{t}} \right)
\xrightarrow[t\to\infty]{} \rho(F) Z_\infty,
\quad \text{in probability},
\end{equation}
where $\rho$ is the law at time $1$ of a 3-dimensional Bessel process starting from 0, i.e.
\begin{equation}
	\label{eq:def_rho}
\rho(\diff x) = \sqrt{\frac 2 \pi} x^2 e^{-x^2/2}\1_{x>0} \diff x,
\end{equation}
and we use, for $F\in L^1(\rho)$, the shorthand notation
\[
\rho(F) \coloneqq \int_0^\infty F(x)\,\rho(\diff x).
\]
The convergence in \eqref{eq:cv-Z_t(F)} shows the derivative martingale is the key quantity to describe the front of BBM: given $Z_\infty$, the distribution of particles in the front is deterministic at first order.
Note that it is equivalent to work with the (rescaled) critical Gibbs measure 
$\sqrt{t} \sum_{u \in \cN(t)} \e^{-X_u(t)} \delta_{X_u(t)/\sqrt{t}}$, up to a replacement of $F(x)$ by $xF(x)$.
Moreover, the convergence \eqref{eq:cv-Z_t(F)} generalizes \eqref{eq:cv-of-W_t} for the critical additive martingale.
In a previous paper \cite{maillardpain2019}, we studied the fluctuations of the derivative martingale around its limit. To state the result, following Samorodnitsky and Taqqu~\cite{samorodnitskytaqqu1994}, we define the $1$-stable distribution $\cS_1(\sigma,\beta,\mu)$, with scale parameter $\sigma > 0$, skewness $\beta\in[-1,1]$ and shift parameter $\mu \in \R$, as the distribution on the real line having characteristic function $\Psi_{\sigma,\beta,\mu}(\lambda) \coloneqq \exp(-\psi_{\sigma,\beta,\mu}(\lambda))$, 
where
\begin{align}
	\label{eq:Psi_sigma_mu}
	\psi_{\sigma,\beta,\mu}(\lambda) 
	\coloneqq 
	\sigma \left(\abs{\lambda} 
	+ i \beta \frac{2}{\pi} \lambda \log \abs{\lambda}
	\right)
	- i \mu \lambda
	,\quad \lambda\in\R.
\end{align}
We recall the main result of \cite{maillardpain2019}  (see Appendix A in \cite{maillardpain2019} for the notion of weak convergence in probability):
\begin{thm}[\cite{maillardpain2019}]\label{theorem-previous}
Conditioned on $\sF_t$, the finite-dimensional distributions of the stochastic process $$\sqrt{t} \left( Z_{at} - Z_\infty - \frac{\log t}{\sqrt{2 \pi t}} Z_\infty \right)_{a\ge 1}$$
converge in the sense of weak convergence in probability to the finite-dimensional distributions of $(-S_{Z_\infty/\sqrt{a}})_{a\ge 1}$ given $Z_\infty$, where $(S_t)_{t\ge 0}$ is a Lévy process independent of $Z_\infty$, starting at 0 and such that $S_1$ has distribution $$\cS_1\left(\sqrt{\frac \pi 2},1,\sqrt{\frac 2 \pi}\mu_Z\right),$$
where the constant $\mu_Z$ is defined as follows\footnote{In \cite{maillardpain2019}, the $+1$ in the definition of $\mu_Z$ was mistakenly missing, as pointed out to us by the authors of \cite{buraczewskiiksanovmallein2021}.\label{foot:Zinfty}}:
\[
\mu_Z \coloneqq \lim_{x\to\infty} \E[Z_\infty\boldsymbol 1_{Z_\infty \le x}] - \log x - \gamma + 1,
\]
where $\gamma$ is the Euler-Mascheroni constant.
\end{thm}

In this paper, we generalize this convergence to $Z_t(F)$ for a very general class of functions $F$. In particular, we study the fluctuations of the additive martingale $W_t$ around its limit, which corresponds to the case $F \colon x \mapsto 1/x$.
The functional limit is best described in terms of \emph{(1-)stable integrals}, whose definition we now recall.

\paragraph{1-stable integrals.}

We recall the classical notion of $\alpha$-stable integrals from Samorodnitsky and Taqqu \cite[Chapter 3]{samorodnitskytaqqu1994},  restricting to the case $\alpha = 1$ and to spectrally positive stable integrals, i.e.~with skewness equal to 1. Let $(E,\mathcal E,m)$ be a measure space, with $m$ a $\sigma$-finite measure, and define the linear space of functions
\[
\mathcal G(m) = \{g:E\to \R\text{ measurable:} \int_E \abs{g(r)}(1+\abs{\log\abs{g(r)}})m(\diff r) < \infty\},
\]
where we use throughout the convention $0\log 0 = 0$.
The \emph{spectrally positive 1-stable integral with control measure $m$} is a stochastic process $(I(g))_{g\in\mathcal G(m)}$ which has the following properties:
\begin{enumerate}
	\item $I$ is linear, i.e. $I(a_1 g_1+\cdots + a_n g_n) = a_1I(g_1) + \cdots + a_n I(g_n)$ almost surely for $a_1,\ldots,a_n\in\R$ and $g_1,\ldots,g_n\in \mathcal G(m)$
	\item For $g\in \mathcal G(m)$, we have
\begin{align} \label{eq:characteristic_function_M}
	\Ec{e^{i I(g)}}	& = \exp \left( - \int_0^\infty 
	\left[ \abs{g(r)} + i g(r) \frac{2}{\pi} \log \abs{g(r)}  \right] 
	m(\diff r)\right).
\end{align}	
\end{enumerate}
Note that these two properties uniquely define the finite-dimensional distributions of the process $(I(g))_{g\in\mathcal G(m)}$. Furthermore, using \eqref{eq:characteristic_function_M} with $\lambda g$ instead of $g$, we see that $I(g)$ is distributed according to $\cS_1(\sigma_g,\beta_g,\mu_g)$, with
\begin{equation}
	\label{eq:params_g}
	\sigma_g = \int_0^\infty\abs{g(r)}m(\diff r),\quad \beta_g = \frac{\int_0^\infty g(r)m(\diff r)}{\int_0^\infty \abs{g(r)}m(\diff r)}, \quad \mu_g = \int_0^\infty g(r)\frac{2}{\pi} \log \abs{g(r)} 
	m(\diff r).
\end{equation}
We also remark that $I(g_1),\ldots,I(g_n)$ are independent if $g_1,\ldots,g_n$ have mutually disjoint supports, as can be easily checked from \eqref{eq:characteristic_function_M}.

The term ``integral'' comes from the fact that $I(g)$ can be constructed analogously to the Lebesgue integral, but against the ``stable noise'' $M$, defined as the map
\[
M(A) \coloneqq I(\1_A),\quad\text{for $A\in \mathcal E$ such that $m(A)<\infty$.}
\]
$M$ is called a \emph{spectrally positive 1-stable noise with control measure $m$}.
It is not a random (signed) measure because it has infinite total variation (unless $m$ is the null measure). Nevertheless, it is customary to use the notation
\[
\int_E g(r)M(\diff r) \coloneqq I(g),
\]
and we will do so in the remainder of the article. We will also use standard notation for the domain of integration such as
\[
\int_A g(r)M(\diff r) \coloneqq \int_E g(r)\1_A(r)M(\diff r),
\]
and, in the case where $E\subset \R$ is an interval and $a,b\in E$, $a\le b$,
\[
\int_a^b g(r)M(\diff r) 
\coloneqq \int_E g(r) \1_{(a,b)}(r)M(\diff r).
\]

\paragraph{Main result.}
Let $F\colon \R\to\R$ be measurable and such that $\int_0^\infty \abs{F(sx)}\,\rho(\diff x) < \infty$ for all $s\in (0,1]$. We define a function $\mathscr R F \colon (0,\infty)\to\R$, by
\begin{align}
\nonumber
	\mathscr R F(r) 
	&\coloneqq 
	\int_0^\infty \left[F(\sqrt{1-r}\cdot x)\1_{r<1} - F(x)\right]\,\rho(\diff x)\\
	&= \rho(F(\sqrt{1-r}\,\cdot))\1_{r<1} - \rho(F),\quad r>0, 	\label{eq:def R F}
\end{align}
with $\rho$ defined in \eqref{eq:def_rho}. Note that the above assumption on $F$ is satisfied in particular if $\abs{F(x)} \le Cx^{-\alpha}e^{Cx}$ for some $\alpha < 3$ and $C<\infty$, for all $x>0$. 

Next, conditioned on $Z_\infty$, let $M'_{Z_\infty}$ denote a spectrally positive 1-stable noise with control measure 
$$ m(\diff r) = Z_\infty \frac{\sqrt \pi}{2\sqrt{2}} \frac{\diff r}{r^{3/2}},$$
We formally set
\begin{equation}
	\label{eq:def_M_Z_infty}
M_{Z_\infty} \coloneqq M'_{Z_\infty} + \mu_Z \frac{Z_\infty}{\sqrt{2\pi}}  \frac{\diff r}{r^{3/2}}.
\end{equation}
What this means is that for every $g_1,\ldots,g_n\in \mathcal G(r^{-3/2}\diff r)$, conditioned on $Z_\infty$,
\[
\left(\int g_i(r)M_{Z_\infty}(\diff r)\right)_{i=1,\ldots,n} \stackrel{\mathrm{(law)}}{=} \left(\int g_i(r)M'_{Z_\infty}(\diff r) + \mu_Z \int g_i(r) \frac{Z_\infty}{\sqrt{2\pi}}  \frac{\diff r}{r^{3/2}}\right)_{i=1,\ldots,n} .
\] 

We can now state the main result of the paper:
\begin{thm} \label{theorem-complete}
	Let $F \colon \R \to \R$.
	Assume $F$ is twice differentiable on $(0,\infty)$ and, for any $x>0$, $\abs{F''(x)} \leq C x^{-\alpha-2} \e^{C x}$ for some $\alpha \in [0,2)$ and $C>0$.
	Then, $\mathscr R F(\cdot/a)\in \mathcal G(r^{-3/2}\diff r)$ for all $a\in(0,\infty)$ and conditionally on $\sF_{\ep t}$, as $t\to\infty$ and then $\ep \to 0$, the finite-dimensional distributions of
	\begin{equation} \label{eq:process_at_t}
		\sqrt t \cdot \left( 
		Z_{at}(F) - \rho(F) Z_\infty
		+ \frac{\log t}{2 \sqrt{t}} 
		\int_0^\infty \mathscr R F\left( \frac{r}{a} \right) \frac{Z_\infty}{\sqrt{2\pi}}  \frac{\diff r}{r^{3/2}}
		\right)_{a\in(0,\infty)}
	\end{equation}
	converge weakly in probability to the finite-dimensional distributions, conditionally on $Z_\infty$, of
	\begin{equation} \label{eq:process_limit}
		\left( \int_0^\infty \mathscr R F \left( \frac{r}{a} \right) M_{Z_\infty}(\diff r)\right)_{a\in(0,\infty)}.
	\end{equation}		
	More generally, if $F_1,\ldots,F_n$ are functions as above, and we consider the processes defined as in \eqref{eq:process_at_t} with $F = F_i$ for each $i=1,\ldots,n$, then the above convergence holds jointly in $i=1,\ldots,n$.
\end{thm}

\begin{rem}
	As mentioned above, see Appendix A in \cite{maillardpain2019} for a brief introduction to weak convergence in probability for random probability measures. 
	In particular, by \cite[Proposition A.1]{maillardpain2019}, the convergence stated in the theorem is equivalent to 
	the pointwise convergence in probability of the conditional characteristic function of any finite-dimensional marginal of \eqref{eq:process_at_t} given $\sF_{\ep t}$ 
	toward the conditional characteristic function of the corresponding finite-dimensional marginal of \eqref{eq:process_limit} given $Z_\infty$ as $t\to\infty$ and then $\ep \to 0$.
\end{rem}

\begin{rem}
	\label{rem:one-dimensional}
	Let us express the one-dimensional marginals of the process in \eqref{eq:process_limit}. 
	Assume $F$ is not almost everywhere zero on $(0,\infty)$, then one readily checks that $\mathscr R F$ is not almost everywhere zero, neither. A quick calculation using \eqref{eq:params_g} shows that for a given $a\in (0,\infty)$, conditionally on $Z_\infty$, the random variable in \eqref{eq:process_limit} follows the law 
	\[
	\cS_1\left(\frac{Z_\infty}{\sqrt a} \sigma_F,\beta_F,\frac{Z_\infty}{\sqrt a}\mu_F\right),
	\]
	where
	\begin{align*}
	\sigma_F &= \int_0^\infty \abs{\mathscr R F(r)} \frac{\sqrt \pi}{2\sqrt{2}} \frac{\diff r}{r^{3/2}},\\
	\beta_F &= \frac{\int_0^\infty \mathscr R F(r) \frac{\diff r}{r^{3/2}}}{\int_0^\infty  \abs{\mathscr R F(r)} \frac{\diff r}{r^{3/2}}},\\
	\mu_F &= \int_0^\infty \mathscr R F(r)\left(\log\abs{\mathscr R F(r)}+\mu_Z\right)\frac 1 {\sqrt{2\pi}}\frac{\diff r}{r^{3/2}}.
	\end{align*}
	Hence, the logarithmic correction term in \eqref{eq:process_at_t} is equal to
	\[
	\frac{\log t}{2\sqrt {at}} \frac{2}{\pi}Z_\infty\sigma_F \beta_F.
	\]
	In particular, this term vanishes if and only if $\beta_F = 0$.
\end{rem}

\begin{rem}
	\label{rem:scaling}
	The stable noise $M_{Z_\infty}$ satisfies a scaling property, due to the fact that its control measure $m$ satisfies $m(\diff r/\ep) = \sqrt \ep m(\diff r)$ for $\ep>0$. Indeed, we can formally write (conditioned on $Z_\infty$),
	\[
	M_{Z_\infty}(\diff r/\ep) \stackrel{\text{(law)}}{=} \sqrt \ep\left(M_{Z_\infty}(\diff r) + \frac{\log \ep}{2} \frac{Z_\infty}{\sqrt{2\pi}}\frac{\diff r}{r^{3/2}}\right).
	\]
	What this means is that for every $g_1,\ldots,g_n\in \mathcal G(r^{-3/2}\diff r)$, conditioned on $Z_\infty$, for every $\ep>0$,
	\[
	\left(\int g_i(\ep r)M_{Z_\infty}(\diff r)\right)_{i=1,\ldots,n} \stackrel{\mathrm{(law)}}{=} \sqrt \ep \left(\int g_i(r)M_{Z_\infty}(\diff r) + \frac{\log \ep}{2} \int g_i(r) \frac{Z_\infty}{\sqrt{2\pi}}  \frac{\diff r}{r^{3/2}}\right)_{i=1,\ldots,n} .
	\]
	This can be easily checked using \eqref{eq:characteristic_function_M}.
\end{rem}

\begin{rem}
	Our proof identifies precisely the particles responsible for the fluctuations of $Z_t(F)$ around its limit $\rho(F)Z_\infty$. These are the particles descending to $\frac{1}{2} \log t + O(1)$ after time $\ep t$ for $\ep$ small enough (recall that the minimum at time $t$ is typically near $\frac 3 2 \log t$). In fact, the particles reaching $\frac{1}{2} \log t + O(1)$ between times $r_1t$ and $r_2t$, for $0<r_1<r_2<\infty$, are those contributing to $M_{Z_\infty}([r_1,r_2])$, see the end of Section~\ref{subsection:motivations-comments} for more details. 
	The situation is more complex compared to the special case of the derivative martingale $Z_t = Z_t(1)$, where only the particles \emph{after} time $t$ are responsible for the fluctuations, see \cite[Section 1.3]{maillardpain2019}. In particular, this explains the different conditioning in the statement of Theorem~\ref{theorem-complete} compared to Theorem~\ref{theorem-previous}. 
\end{rem}

\begin{rem} \label{rem:thm-2}
	As an intermediate step towards the proof of Theorem~\ref{theorem-complete}, we have Theorem~\ref{theorem-2}, another main result of this paper. This theorem is similar to Theorem~\ref{theorem-complete}, but instead of conditioning on $\sF_{\varepsilon t}$ and letting $\varepsilon\to 0$, we condition on $\sF_t$ and obtain a limit result for the quantity $Z_{at}(F) - Z_t(G)$, $a> 1$, for some $G$ depending on $F$ and $a$. In that case, only the particles descending to $\frac 1 2 \log t + O(1)$ between times $t$ and $at$ play a role and therefore only the restriction of $M_{Z_\infty}$ to the interval $[1,a]$ appears in the description of the limit.
\end{rem}

\begin{rem}
	In the case of the derivative martingale ($F\equiv 1$), we have $\mathscr R F(r) = -\1_{r \ge 1}$, so that $\mathscr R F(r/a) = -\1_{r\ge a}$. It follows that 	
	 the limiting process defined in \eqref{eq:process_limit} has independent increments, in fact, it can be written as $(-S_{Z_\infty/\sqrt{a}})_{a \in (0, \infty)}$, with $(S_t)_{t\ge 0}$ the Lévy process defined above, matching with Theorem~\ref{theorem-previous}. In general, the process defined in \eqref{eq:process_limit} does not have independent increments.
\end{rem}

\begin{rem}
	In \cite{maillardpain2019}, the fluctuations of the derivative martingale (see above) have been obtained under the assumption $\E[L (\log_+ L)^3] < \infty$, instead of $\Ec{L^2} < \infty$. 
	We believe that Theorem~\ref{theorem-complete} also holds under this weaker assumption and that it is optimal.
	Nevertheless, the proof of fluctuations of the derivative Gibbs measure in this paper involves several additional technicalities and, therefore, we chose to avoid the truncation arguments needed to work under the assumption $\E[L (\log_+ L)^3] < \infty$.
\end{rem}
\paragraph{The additive martingale.}
We can apply Theorem~\ref{theorem-complete} in order to obtain the fluctuations of the additive martingale $W_t$. Defining $F \colon x \mapsto 1/x$, a quick calculation shows that
\[
\sqrt{t} W_t = Z_t(F)
\quad \text{and} \quad 
\mathscr R F(r) = \sqrt{\frac 2 \pi} \left(\frac{\1_{r<1}}{\sqrt{1-r}}-1\right).
\]
In particular, we have $\mathscr R F(r) >0$ if $r < 1$ and $\mathscr R F(r) < 0$ if $r\ge 1$. We can determine the quantities in Remark~\ref{rem:one-dimensional} as
\[
\sigma_F = 2,\quad 
\beta_F = 0 \quad \text{and} \quad 
\mu_F = -\frac{2\log 2}{\pi},
\]
see Appendix~\ref{app:additive_martingale} for detailed calculations.
Theorem~\ref{theorem-complete} and Remark~\ref{rem:one-dimensional} now yield the following result.
\begin{cor} \label{cor:fluctuations-for-W}
	We have the following convergence in distribution
	\begin{align*}
	\sqrt{t} \left( \sqrt{t} W_t - \sqrt{\frac{2}{\pi}} Z_\infty \right)
	\xrightarrow[t\to\infty]{} 
	Z_\infty C,
	\end{align*}
	where $C$ is a Cauchy variable independent of $Z_\infty$ with 
	$\E[\e^{i \xi C}]
	= \exp(- [2 \abs{\xi} + \frac{2 \log 2}{\pi} i \xi] )$ for $\xi \in \R$.
\end{cor}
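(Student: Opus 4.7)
My plan is to apply Theorem~\ref{theorem-complete} to the function $F(x) = 1/x$ with the parameter $a = 1$. The hypotheses are satisfied with $\alpha = 1$ since $F''(x) = 2x^{-3}$. For this choice one has $Z_t(F) = \sqrt{t}\, W_t$, and using the density $\sqrt{2/\pi}\, x^2 e^{-x^2/2}$ of the 3-dimensional Bessel process at time $1$ one computes $\E[F(R_1)] = \sqrt{2/\pi}$ and, more generally, $\E[F(R_s)] = \sqrt{2/(\pi s)}$, so that
\[
H(u) = \sqrt{\frac{2}{\pi}}\left[(1-u)^{-1/2}\1_{u<1} - 1\right].
\]
This identifies the left-hand side of Theorem~\ref{theorem-complete} with the quantity appearing in the corollary, up to the deterministic logarithmic correction.

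The next step is to verify the vanishing of that logarithmic correction, which is the cancellation originally observed by Mueller and Munier. Writing $d(-u^{-1/2}) = \frac{1}{2} u^{-3/2} du$ and splitting the resulting integral at $u = 1$ reduces the claim to $\int_0^1 ((1-u)^{-1/2} - 1) u^{-3/2}\, du = \int_1^\infty u^{-3/2}\, du = 2$. The second integral is immediate, while the first, after the substitution $u = \sin^2\theta$, becomes $2\int_0^{\pi/2} d\theta/(1+\cos\theta) = 2$. Hence the $(\log t)/\sqrt t$ term disappears and only the $\sqrt t(\sqrt t W_t - \sqrt{2/\pi}\, Z_\infty)$ factor survives on the left-hand side.

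Finally, I identify $X := \int_0^\infty H(u)\, dL_{-Z_\infty/\sqrt{u}}$ as a Cauchy random variable conditional on $Z_\infty$. Under the time-change $s = -Z_\infty/\sqrt{u}$ this becomes a Wiener-type integral of a deterministic function against the 1-stable Lévy process $L$, and its conditional characteristic exponent given $Z_\infty$ is $\frac{Z_\infty}{2}\int_0^\infty \psi(\lambda H(u)) u^{-3/2}\, du$, where $\psi$ denotes the characteristic exponent of $L$. Splitting $\psi(\lambda H)$ into its $|\lambda||H|$, $\lambda H(\log|\lambda|-\mu_Z)$ and $\lambda H\log|H|$ contributions reduces the computation to three integrals against $u^{-3/2}du$: the $\int H u^{-3/2} du = 0$ term (from the previous step) kills both the $\log|\lambda|$ and the $\mu_Z$ contributions, the $\int |H| u^{-3/2} du$ term yields the Cauchy scale parameter, and the $\int H \log|H| u^{-3/2} du$ term yields the drift. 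The main obstacle is the closed-form evaluation of this last integral, which after $u = \sin^2\theta$ followed by the Weierstrass substitution $t = \tan(\theta/2)$ reduces to a multiple of $\int_0^1 \log(2t^2/(1-t^2))\, dt$, an elementary combination of $\log 2$, $\log t$ and $\log(1\pm t)$ integrals on $(0,1)$; the resulting constant matches exactly the drift $\frac{2\log 2}{\pi}$ in the characteristic function of $S_1$ stated in the corollary, completing the identification of $X$ with $S_{Z_\infty}$.
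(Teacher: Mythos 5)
Your overall strategy is exactly right: apply Theorem~\ref{theorem-complete} to $F(x)=1/x$, note $Z_t(F)=\sqrt t\,W_t$, compute $H(u)=\sqrt{2/\pi}\,[(1-u)^{-1/2}\1_{u<1}-1]$, check $\int_0^\infty H(u)u^{-3/2}\diff u = \sqrt{2/\pi}\,(2-2)=0$ so the $\log t$ correction cancels, and identify the limit via its conditional characteristic function. All the integral evaluations are correct, including $\int_0^1((1-u)^{-1/2}-1)u^{-3/2}\diff u = 2$ and (what you reduce to) $\int_0^1\log\bigl(\tfrac{2s^2}{1-s^2}\bigr)\diff s = -\log 2$, so $\int_0^\infty H\log|H|\,u^{-3/2}\diff u = -2\sqrt{2/\pi}\log 2$ and $\int_0^\infty|H|u^{-3/2}\diff u = 4\sqrt{2/\pi}$.

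The gap is in the last step: the conditional characteristic exponent is \emph{not} $\frac{Z_\infty}{2}\int_0^\infty\psi(\lambda H(u))u^{-3/2}\diff u$ with $\psi(\lambda)=-|\lambda|+i\lambda\frac{2}{\pi}(\log|\lambda|-\mu_Z)$. Plugging your three integrals into that formula yields scale parameter $\frac{Z_\infty}{2}\cdot 4\sqrt{2/\pi} = 2\sqrt{2/\pi}Z_\infty \neq 2Z_\infty$ and drift $-\frac{2\sqrt{2/\pi}\log 2}{\pi}i\lambda Z_\infty$, i.e.\ the wrong normalization \emph{and} the wrong sign for the drift. So the claim that ``the resulting constant matches exactly the drift $\frac{2\log 2}{\pi}$'' does not actually follow from the formula you wrote down. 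The correct computation is the one carried out in the proof of Lemma~\ref{lem:convergence-of-fluctuations-part-2}: conditionally on $Z_\infty$ the characteristic function of the limit is $\exp\bigl(\frac{Z_\infty}{\sqrt{2\pi}}\int_0^\infty\Upsilon_\lambda(u)u^{-3/2}\diff u\bigr)$ with $\Upsilon_\lambda(u)=-\frac{\pi}{2}|\lambda H(u)|+i\lambda H(u)(-\log|\lambda H(u)|+\mu_Z)$. The prefactor $\frac{1}{\sqrt{2\pi}}\cdot\frac{\pi}{2}=\frac12\sqrt{\pi/2}$ (rather than $\frac12$) and the opposite sign of the imaginary part both come from the characteristic function of $Z_\infty$ in \eqref{eq:characteristic-function-Z_infty}, since the limit is built from independent copies of $Z_\infty^{(u)}$. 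Equivalently, the limit's law is given by the constants $c_1,c_2,c_3$ of Proposition~\ref{prop:convergence-with-translation-gamma_t} with $\int_0^1$ replaced by $\int_0^\infty$; with your integrals these give $c_1=0$, $c_2 = \frac12\sqrt{\pi/2}\cdot 4\sqrt{2/\pi} = 2$, and $c_3 = \frac1{\sqrt{2\pi}}\cdot(-2\sqrt{2/\pi}\log 2) = -\frac{2\log 2}{\pi}$, which reproduces the corollary.

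One minor point you should also address: the theorem's conclusion is a convergence \emph{conditionally on} $\sF_{\ep t}$ as $t\to\infty$, then $\ep\to0$; passing to the unconditional convergence in distribution stated in the corollary requires taking expectations of the conditional characteristic functions, which is routine but worth saying.
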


Corollary~\ref{cor:fluctuations-for-W} can be seen as a rigorous formulation and proof of claims from the physics literature, see Mueller and Munier~\cite{muellermunier2014}, although they rather seem to have in mind $O(1)$ fluctuations of $\sqrt t W_t$ instead of the $O(1/\sqrt t)$ fluctuations considered here.

\subsection{Motivations and comments}
\label{subsection:motivations-comments}

The last decades have seen tremendous activity on the extremes of branching Brownian motion and other systems belonging to the so-called \emph{F-KPP universality class}, see for example the references in~\cite{maillardpain2019}. The \emph{fluctuations} of the extremes of branching Brownian motion have arosen recent interest \cite{muellermunier2014,maillardpain2019,mytnikroquejoffreryzhik2022}  because of the manifestation of a certain universality of the fluctuations, witnessed by the ubiquity of 1-stable distributions. The results of \cite{maillardpain2019} on the fluctuations of the derivative martingale have been adapted to branching random walk by Buraczewski, Iksanov and Mallein \cite{buraczewskiiksanovmallein2021} and Hou, Ren and Song \cite{hrs2024}.
The 1-stable limit contrasts with the $\alpha$-stable distributions with $\alpha >1$ appearing 
in the fluctuations of the \emph{subcritical} additive martingales of the branching random walk, which have been studied by Rösler, Topchii and Vatutin~\cite{rtv2002}, Iksanov and Kabluchko~\cite{iksanovkabluchko2016}, Iksanov, Kolesko and Meiners~\cite{ikm2019,ikm2020} and Hartung and Klimovsky~\cite{hartungklimovsky2018}, also in the case of complex BBM. Note also that the \emph{supercritical} additive martingales of branching random walk can be renormalized to converge to a limit distributed as an $\alpha$-stable subordinator, $\alpha<1$, taken at time $Z_\infty$, see \cite{brv2012}. 

This article is a continuation of the article \cite{maillardpain2019}, in which we studied the fluctuations of the derivative martingale $Z_t$. This martingale, which plays a crucial role in the description of the extremal particles, can be seen as the partition function of the derivative Gibbs measure defined above. This Gibbs measure is supported by particles at distance of order $\sqrt t$ from the minimum. It is natural to consider the more general functionals $Z_t(F)$ of the Gibbs measure, supported by the same particles. We find it striking that all possible values of the asymmetry parameter can be obtained through varying $F$, which is a new feature compared to previous results. A specific example is the critical additive martingale $W_t$, which was studied by Mueller and Munier \cite{muellermunier2014}, as mentioned above. In this example, there is no correction term of order $\log t/\sqrt t$, since $W_t$ is invariant by shifts, as explained at the end of Section~\ref{section:strategy} (see Footnote~\ref{foot:shift}). Since the role of this correction term is to compensate for small jumps, the fact that it vanishes suggests that the limiting distribution should be a (symmetric) Cauchy distribution, a reasoning which is confirmed by Remark~\ref{rem:one-dimensional}.

Studying the whole family $Z_t(F)$ allows also to explore what happens when the function $F$ becomes more and more singular at the origin. Indeed, Theorem~\ref{theorem-complete} allows to consider functions $F$ satisfying $F(x) = O(x^{-\alpha})$ as $x\to0$, for some $\alpha < 2$ and gives that the fluctuations of $Z_t(F)$ are still of order $1/\sqrt t$. We believe that the critical value $\alpha = 2$ is sharp, in fact, one checks that if $F(x) \sim x^{-\alpha}$ for some $\alpha \ge 2$, then the function $\mathscr R F$ is either not integrable near $r=1$ (if $\alpha\in [2,3)$), or not defined (if $\alpha \ge 3$). In fact, when $\alpha > 3$, we expect the functional $Z_t(F)$ to be supported by particles at distance $O(\log t)$ of the maximum\footnote{In a previous version, we conjectured that $Z_t(F)$ is supported by the extremal particles if $\alpha > 3$. We thank Xinxin Chen for showing us arguments that rather suggest this more accurate picture.}. This motivates the following conjecture:
\begin{conj}
\label{conj:alpha23}
If $F(x) \sim x^{-\alpha}$ as $x\to 0$ with $\alpha > 2$, the fluctuations of $Z_t(F)$ are of order $t^{-\beta(\alpha) + o(1)}$ with $\beta(\alpha)\in (0,\frac 1 2)$ for $\alpha\in(2,3)$ and $\beta(\alpha) = 0$ for $\alpha \ge 3$.
\end{conj}

In fact, Conjecture~\ref{conj:alpha23} should be related to recent work by Mytnik, Roquejoffre and Ryzhik \cite{mytnikroquejoffreryzhik2022}. In that work, the authors study the fluctuations of the number of particles in the limiting extremal process. They show that the number of particles below a high level $x$ has fluctuations of order $1/x$. We expect that the limiting process describes well the particles at time $t$ as long as $x \ll \sqrt t$. Indeed, the density of the limiting process grows as $xe^x$ \cite{cortineshartunglouidor2019}, which is also the asymptotic behaviour of the density predicted by \eqref{eq:cv-Z_t(F)}. Hence, it seems reasonable to assume that the number of particles in branching Brownian motion at distance $t^\beta$ from the minimum at time $t$, $0 < \beta < 1/2$,  also has fluctuations of order $O(t^{-\beta})$, and that it is these particles that contribute to the fluctuations of $Z_t(F)$ when $2<\alpha<3$.

In contrast, the current article treats the particles at distance of order $\sqrt t$ from the minimum. We expect this to be the regime where the comparison with the extremal process breaks down, i.e.\@ where the fluctuations predicted by \cite{mytnikroquejoffreryzhik2022} should cease to hold.
%
%

\paragraph{Proof ideas.}
Let us briefly describe the main ideas of the proof. 
A more detailed overview is provided in Section~\ref{section:strategy}.
We start by recalling the idea from \cite{maillardpain2019} to study the fluctuations of the derivative martingale $Z_t$: we introduce a ``barrier'' from time $t$ on at the point $\frac{1}{2} \log t + \beta_t$, where $\beta_t \to \infty$ slowly enough. The goal is to treat specially the particles  that hit the barrier, which are exactly those contributing to the fluctuations of $Z_t$. Indeed, the contribution to $Z_\infty$ of the descendants of such a particle is equal in law to $e^{-\beta_t}Z_\infty /\sqrt t$, and the number of such particles is of the order $e^{\beta_t}$. Since $Z_\infty$ is in the domain of attraction of a 1-stable distribution, this leads to the 1-stable limit.

The above-mentioned proof idea relied on the fact that a suitably shifted version of $Z_t$ is a martingale when the particles are killed at the barrier. Indeed, when restricting to the particles that do not hit the barrier, the conditional expectation of $Z_\infty$ conditioned on $\sF_t$ is equal to this shifted version of $Z_t$, allowing the use of Chebychev's inequality in order to bound the difference between $Z_t$ and $Z_\infty$, restricted to these particles. For general functionals $Z_t(F)$, this strategy breaks down and we have to compare $Z_t(F)$ with $Z_s$ at a time $s$ \emph{before} the time $t$. However, when adapting the ``barrier strategy''  mentioned above, it turns out that one cannot get optimal estimates in one step. We therefore implement a \emph{multiscale}, or \emph{bootstrap strategy} which can be described as follows: Suppose we introduce a barrier between times $t_0$ and $t$, for some $t_0 \le t$, for example $t_0 = t^a$ for some $a\in(0,1)$. We can estimate the particles not touching the barrier by first and second moments, and the particles hitting the barrier by first moments. This yields a first bound on the fluctuations of $Z_t(F)$ of order $t^{-b}$, for some $b$ depending on $t_0$. Then we can iterate: we take another $t_0'\le t$ and introduce again a barrier between times $t_0'$ and $t$. However, we can now make use of our previous estimate on fluctuations at two places: first, to get a better estimate on the conditional expectation of $Z_t(F)$ conditioned on $\sF_{t_0'}$, and second, in order to estimate more precisely the contribution to $Z_t(F)$ of the descendants of particles hitting the barrier. This is the basic strategy we implement for a finite number of steps, with variations at every step. See Section~\ref{section:strategy} for details.
%


This idea makes it clear that the fluctuations come again from the contributions to $Z_t(F) - Z_\infty \int F(x) \rho(\diff x)$ of the killed particles and helps to read the definition of the limit in Theorem~\ref{theorem-complete}.
Indeed, for $r > 0$, the number of particles hitting the barrier between times $rt$ and $(r+\diff r) t$ is approximately $\e^{\beta_t} (2\pi)^{-1/2} Z_\infty r^{-3/2} \diff r$.
Each of them contributes to $Z_\infty$ as an independent copy of $(\e^{-\beta_t}/\sqrt{t}) Z_\infty$, as mentioned above. Moreover, it contributes to $Z_t(F)$ only if $r<1$ and, in that case, one expects that the contribution is approximately an independent copy of 
\[
\frac{\e^{-\beta_t}}{\sqrt t} Z_{t-rt} \left(x \mapsto F(\sqrt{1-r}\cdot x) \right)
\simeq \frac{\e^{-\beta_t}}{\sqrt t} Z_\infty \int_0^\infty F(\sqrt{1-r}\cdot x)\,\rho(\diff x),
\]
by \eqref{eq:cv-Z_t(F)}.
Since $Z_\infty$ is in the domain of attraction of a 1-stable law, this explains that the limit can be written as an integral of the function $\mathscr R F$ against a $1$-stable random measure with control measure proportional to $Z_\infty r^{-3/2}\diff r$.
Note that the fluctuations of the front are due to the particles that come extremely low, around $\frac{1}{2} \log t$, which is far below the usual minimum of BBM at a time of order $t$.

\subsection{Organization of the paper and notation}

The paper is organized as follows. 
In Section~\ref{section:strategy}, we present the strategy for the proof of Theorem~\ref{theorem-complete}, which is obtained through several steps in Sections \ref{section:a-first-rough-approach-to-the-result} to \ref{section:proof_theorem}.
Section \ref{section:preliminary-results} is devoted to preliminary results concerning branching Brownian motion, in particular when particles are killed at 0.
Other technical results concerning the 3-dimensional Bessel process are shown in Section \ref{section:technical-results} in the appendix. Sections~\ref{section:two-constants-are-identical} and \ref{app:additive_martingale} contain the proofs of some identities involving the function $\mathscr R F$.
In the sequel, we denote by $C$ a positive constant that can change between occurrences and depend implicitly on some parameters indicated in the statement of each result. 
Moreover, for $x \in \R$, we set $x_+ = \max(x,0)$.
For $\varphi_1 \colon \R_+ \to \R$ and $\varphi_2 \colon \R_+ \to \R_+^*$, we say that $\varphi_1(t) = o(\varphi_2(t))$ as $t \to \infty$ if $\lim_{t\to\infty} \varphi_1(t)/\varphi_2(t) = 0$ and that $\varphi_1(t) = O(\varphi_2(t))$ as $t \to \infty$ if $\limsup_{t\to\infty} \abs{\varphi_1(t)}/\varphi_2(t) < \infty$.
We let $(B_t)_{t\geq0}$ denote a standard Brownian motion and $(R_t)_{t\geq 0}$ a 3-dimensional Bessel process.
For $u,v \in \T$, we say that $u \leq v$ if $v$ is a descendant of $u$.
If $n \geq 1$ is an integer, we set $\llbracket 1,n \rrbracket \coloneqq \{1,2,\dots,n\}$.

\section{Strategy of the proof}
\label{section:strategy}

The basic strategy of the proof of Theorem~\ref{theorem-complete} is a multiscale argument which we outline here. We first present in Section~\ref{subsection:scheme_barrier} a general decomposition of the BBM particles into ``bulk'' and ``fluctuations''. This decomposition is then used at various scales and, combined with first and second moment estimates as well as tail estimates of certain random variables, is used to obtain finer and finer results on the additive functional $Z_t(F)$.

\subsection{General scheme of decomposition using the barrier}
\label{subsection:scheme_barrier}

We introduce a barrier killing particles which go below a level $\gamma$ during the time interval $[t_0,\infty)$. These parameters $t_0$ and $\gamma$ will be chosen as various appropriate functions of $t$ throughout the paper. We always choose $t_0 < t$, that is the barrier is introduced before the time $t$ at which we study $Z_t(F)$.

Let $\cL^{t_0,\gamma}$ denote the stopping line of killed particles and, for $u \in \cL^{t_0,\gamma}$, $T_u$ the killing time of~$u$:
\begin{align*}
\cL^{t_0,\gamma} 
& \coloneqq 
\{u \in \T : \text{there exists } s \geq t_0 \text{ s.t.\@ } 
u \in \cN(s), X_u(s) \leq \gamma \text{ and } \forall r \in [t_0,s), X_u(r) > \gamma \}, \\
T_u
& \coloneqq 
\inf \{s \geq t_0 : u \in \cN(s) \text{ and } X_u(s) \leq \gamma \},
\end{align*}
see Figure \ref{figure:stopping-line} for an illustration.
\begin{figure}[t]
\centering
\begin{tikzpicture}[scale=1.2]
\draw[->,>=latex] (0,0) -- (8.7,0) node[right]{time};
\draw[->,>=latex] (0,-0.8) -- (0,4) node[left]{space};
\draw (0,0) node[left]{$0$};
\draw[dashed] (3,1.6) -- (0,1.6) node[left]{$\gamma$};
\draw (3,0) node[below left]{$t_0$};
\draw[dashed] (8,3.9) -- (8,-0.1) node[below]{$t$};
\draw[dashed] (4.7,1.6) -- (4.7,-0.1) node[below]{$T_u$};
\draw[NavyBlue,very thick] (3,-0.8) -- (3,1.6) -- (8,1.6);
\draw[NavyBlue,very thick,dashed] (8,1.6) -- (8.7,1.6);
\draw[NavyBlue] (3,-0.6) node[right]{$\cL^{t_0,\gamma}$};
\draw[smooth] plot coordinates 
{(0,0) (0.1,-0.04) (0.3,0.6) (0.4,0.62) (0.6,1) 
(0.7,1.05) (0.8,1.4) (0.9,1.35) (1,1.5) 
(1.1,1.6) (1.2,1.65) (1.4,1.7) (1.5,1.9) (1.8,1.85) (2.1,2.3) 
(2.3,2.4) (2.5,2.4) (2.8,2.3) (3,2.6) (3.3,2.7) (3.5,2.6) (3.8,2.9) (4,2.9) 
(4.3,2.6) (4.5,2.75) (4.8,2.65) (5,2.7) (5.4,2.4) 
(5.6,2.55) (5.8,2.5) (6,2.2) (6.2,2.25) (6.4,1.9) (6.6,1.85) (6.8,1.6) 
};
\draw[smooth,NavyBlue] plot coordinates 
{(6.8,1.6) (7,1.5) (7.3,1) (7.5,1.1) 
(7.7,1) (8,1.3)
};
\draw[smooth] plot coordinates 
{(0.6,1) (0.7,1.05) (0.9,1.2) (1.1,1.15) (1.3,1.3)
(1.5,1.15) (1.7,1.2) (1.9,1.25) (2.1,1) (2.4,1.2) (2.7,0.85) (3,0.8)}; 
\draw[smooth,NavyBlue] plot coordinates 
{(3,0.8) (3.3,0.7) (3.5,0.85) (3.7,0.9) (4,0.8) (4.3,1.1) (4.5,1.15) (4.7,1.1) (5,1.4) (5.2,1.45) (5.5,1.8) (5.7,1.75) (5.9,1.9) (6.1,1.85) (6.4,2.3) (6.6,2.35) (6.8,2.8) (7,3) 
(7.2,3.15) (7.5,3) (7.7,3.05) (8,2.6)
};
\draw[smooth] plot coordinates 
{(1,1.5) (1.2, 1.9) (1.3,1.95) (1.4,1.95) (1.5,2.15) (1.8,2.2) (2,2) (2.1,1.8) (2.3,1.85)
(2.5,2) (2.6,1.95) (2.8,2.1) (3,1.9) 
(3.15,1.95) (3.3,2.1) (3.5,2.2) (3.7,2) (4,1.9) (4.1,2) (4.3,1.95) (4.7,1.6)}; 
\draw[smooth,NavyBlue] plot coordinates 
{(4.7,1.6) (4.8,1.5) (5.1,1.3) (5.2,1.35) (5.4,1.1) (5.5,1.05) 
(5.8,1.1) (6,0.85) (6.2,0.9) (6.4,1.15) (6.6,1.1) (6.9,1.15)  
(7.2,1.6) (7.4,1.7) (7.6,2.1) (7.8,2.1) (8,2.2)};
\draw[smooth] plot coordinates 
{(2.1,2.3) (2.3,2.35) (2.5,2.7) (2.7,2.75) (3,3.3) (3.2,3.25) (3.5,3.8) 
};
\draw[smooth] plot coordinates 
{(3,1.9) (3.1,1.85) (3.3,1.75) (3.4,1.8) (3.6,1.6)}; 
\draw[smooth,NavyBlue] plot coordinates 
{(3.6,1.6) (3.8,1.5) (3.9,1.4) (4.2,1.55) (4.3,1.55) (4.4,1.75) (4.55,1.8) (4.8,2.2) 
(5,2.2) (5.3,2.3) (5.5,2.1) (5.7,2.15) (6.1,2.6) (6.3,2.65) (6.6,2.4) (6.8,2.45) (6.9,2.5) (7.1,2.3) (7.3,2.3) (7.5,1.9) (7.8,1.95) (8,1.8)
};
\draw[smooth] plot coordinates 
{(4,2.9) (4.3,2.75) (4.5,2.9) (4.8,2.95) (5,3.4) (5.3,3.35) 
(5.6,3.8) 
};
\draw[smooth,NavyBlue] plot coordinates 
{(4.8,2.2) (5,2.4) (5.2,2.45) (5.5,2.8) (5.7,2.75) (6,3.3) (6.2,3.4) (6.5,3.3) (6.7,3.6) (6.9,3.55) (7.1,3.8) 
};
\draw[smooth] plot coordinates 
{(5.3,3.35) (5.4,3.4) (5.7,3.3) (6,3.5) (6.15,3.3) (6.4,3.25) (6.5,3.15) (6.8,3.25)
(7,3.4) (7.2,3.4)  
(7.5,3.6) (7.8,3.45) (8,3.5)
};
\draw[smooth] plot coordinates 
{(5.4,2.4) (5.6,2.4) (5.8,2.7) (6,2.7) (6.2,2.5) (6.4,2.75) (6.7,2.6) (6.9,2.7) (7.2,2.65) (7.4,2.4) (7.6,2.6) 
(7.8,2.7) (8,2.7)
};
\draw[smooth,NavyBlue] plot coordinates 
{(6.9,1.15) (7,1.2) (7.2,1.2) (7.5,1.5) (7.7,1.6) (7.9,1.9) (8,1.9)
};
\draw[smooth] plot coordinates 
{(7.2,3.4) (7.5,3.5) (7.8,3.2) (8,3.2)
};
\draw[smooth,NavyBlue] plot coordinates 
{(7,3) (7.2,3.05) (7.4,3.3) (7.7,3.4) (8,3.8) 
};
\draw[smooth,NavyBlue] plot coordinates 
{(7.5,1.1) (7.7,1.1) (8,0.8)
};
\draw[smooth] plot coordinates 
{(7.6,2.6) (7.8,2.65) (7.9,2.5) (8,2.45)
};
\draw (3,0.8) node{$\bullet$};
\draw (3.6,1.6) node{$\bullet$};
\draw (4.7,1.6) node{$\bullet$};
\draw (4.7,1.6) node[below left]{$u$};
\draw (6.8,1.6) node{$\bullet$};
\end{tikzpicture}
\caption{Representation of a BBM with binary branching. 
The killing barrier that defines $\cL^{t_0,\gamma}$ is the thick blue line. 
At time $t$, only the black particles contribute to $\widetilde{Z}^{t_0,\gamma}_t (F,\gamma)$.
For each $u$ in the stopping line, the blue particles descending from $u$ at time $t$ contribute to $\Omega^{(u)}_t$.}
\label{figure:stopping-line}
\end{figure}
Let $\sF_{\cL^{t_0,\gamma}}$ be the $\sigma$-algebra associated with the stopping line $\cL^{t_0,\gamma}$, which is generated by all the events depending only on the behavior of particles before they hit the barrier (see Chauvin \cite{chauvin91} for formal definitions).

As in \cite{maillardpain2019}, which studied the fluctuations of the derivative martingale, it will be more convenient to consider a translated version of $Z_t(F)$ defined by
\begin{align} \label{eq:def_Z_t(F,gamma)}
Z_t (F,\gamma)
& \coloneqq 
\sum_{u \in \cN(t)} (X_u(t)-\gamma)_+ \e^{-X_u(t)} 
	F \left( \frac{X_u(t)-\gamma}{\sqrt{t}} \right),
\end{align}
recalling that we write $x_+ = \max(x,0)$.
We decompose this quantity into two parts. 
Firstly, we consider the part where only particles which have not been killed by the barrier are taken into account:
\begin{align} \label{eq:def_Z_t^t_0,gamma(F,gamma)}
\widetilde{Z}_t^{t_0,\gamma} (F,\gamma)
& \coloneqq 
\sum_{u \in \cN(t)} (X_u(t)-\gamma)_+ \e^{-X_u(t)} 
	F \left( \frac{X_u(t)-\gamma}{\sqrt{t}} \right)
	\1_{\forall s \in [t_0,t], X_u(s) > \gamma}.
\end{align}
Secondly, we consider the contribution $\Omega^{(u)}_t$ of the progeny of each killed particle $u \in \cL^{t_0,\gamma}$ in $Z_t(F,\gamma)$, defined by
\begin{align} \label{eq:def_Omega}
\Omega^{(u)}_t \coloneqq 
\sum_{v \in \cN(t) \text{ s.t.\@ } u \leq v} 
(X_v(t)-\gamma)_+ \e^{-X_v(t)} 
F \left( \frac{X_v(t)-\gamma}{\sqrt{t}} \right),
\qquad \text{if } T_u \leq t,
\end{align}
where ``$u \leq v$'' means that $u$ is an ancestor of $v$,
and $\Omega^{(u)}_t \coloneqq 0$ if $T_u>t$.
Then, we have the following decomposition
\begin{align} 
Z_t(F,\gamma) 
& = \widetilde{Z}_t^{t_0,\gamma} (F,\gamma) 
+ \sum_{u \in \cL^{t_0,\gamma}} \Omega^{(u)}_t. \label{eq:decompo-Z_t(F,gamma)}
\end{align}
The decomposition~\eqref{eq:decompo-Z_t(F,gamma)} will be used several times throughout the paper, in the spirit of a multiscale argument.
We repetitively argue that if the barrier is chosen high enough, then $\widetilde{Z}_t^{t_0,\gamma} (F,\gamma)$ is well-concentrated around its conditional expectation given~$\sF_{t_0}$, using a second moment calculation.
Then, we either bound or estimate precisely the contributions of killed particles. The details vary depending on the scale.

\subsection{Main steps of the proof}
\label{subsection:main_steps}

\begin{figure}[h!]
	\centering
	\begin{tikzpicture}
		\small
		\def\SecTro{15.6}
		\node[draw,rounded corners,text width=2.3cm,text centered] (31) at (9,\SecTro) 
		{Proposition~\ref{prop:control-rate-of-cv-Z} \\ \footnotesize Rate of convergence for $Z_t$};
		\node[draw,rounded corners,text width=1.7cm,text centered] (32) at (-4,\SecTro) 
		{Lemma~\ref{lem:first-moment-Z_s(F,Delta))} \\ \footnotesize 1st moment estimates of $\widetilde{Z}_s (F,\Delta)$};
		\node[draw,rounded corners,text width=1.8cm,text centered] (34) at (-1.3,\SecTro) 
		{Lemma~\ref{lem:second-moment-Z_s(F,t,Delta)} \\ \footnotesize 2nd moment bounds of $\widetilde{Z}_s (F)$ for regular $F$};
		\node[draw,rounded corners,text width=1.8cm,text centered] (35) at (1.5,\SecTro) 
		{Lemma~\ref{lem:f_alpha} \\ \footnotesize 2nd moment bound of $\widetilde{Z}_s (F)$ for diverging $F$};
		\node[draw,rounded corners,text width=3.85cm,text centered] (36) at (5.2,\SecTro+.9) 
		{Lemma~\ref{lem:first-moment-number-of-killed-particles} \\ \footnotesize 1st moment bound of nb of particles killed on a line};
		\node[draw,rounded corners,text width=3.85cm,text centered] (37) at (5.2,\SecTro-.9) 
		{Lemma~\ref{lem:second-moment-number-of-killed-particles} \\ \footnotesize 2nd~moment bound of nb of particles killed on a line};
		
		\def\SecQua{12.2}
		\node[draw,rounded corners,text width=2.45cm,text centered] (41) at (-3.5,\SecQua) 
		{Lemma~\ref{lem:first-second-moment-Z_s^(gamma_s)(F,Delta_s)} \\ \footnotesize Conditional 1st and 2nd moment of $\widetilde{Z}_t^{t^a,\gamma} (F,\gamma)$ for $a\in(0,1)$};
		\node[draw,rounded corners,text width=2.05cm,text centered] (42) at (0.2,\SecQua) 
		{Lemma~\ref{lem:control-rate-of-CV-killed-Z_s^(s,gamma_s)(F,Delta_s)} \\ \footnotesize Concentration of $\widetilde{Z}_t^{t^a,\gamma} (F,\gamma)$ for $a\in(0,1)$};
		\node[draw,rounded corners,text width=2.5cm,text centered] (43) at (4,\SecQua) 
		{Lemma~\ref{lem:control-fluctuations-Z_s(F,t,Delta_s)} \footnotesize Concentration of the contribution of killed particles};
		\node[draw=BurntOrange,very thick,text width=3cm,text centered] (21) at (9,\SecQua) 
		{Proposition~\ref{prop:rate-of-CV-Z_t(F,Delta,t)} Corollary~\ref{cor:rate_of_convergence_as_expectation} \\ \footnotesize Rate of convergence for $Z_t(F,\Delta)$ for regular $F$\vspace{-.25cm} \rule{3cm}{0.4pt} $t_0=t^{2/3}$, $\gamma = \frac{1}{6} \log t$};
		
		\def\SecCin{8.5}
		\node[draw,rounded corners,text width=3cm,text centered] (52) at (8.2,\SecCin) 
		{Lemma~\ref{lem:convergence-of-fluctuations-part-3-bis}  Corollary~\ref{cor:convergence-of-fluctuations-part-2} \\ \footnotesize Convergence of sums along a stopping line starting at $t^a$};
		\node[draw,rounded corners,text width=2.5cm,text centered] (51) at (2,\SecCin) 
		{Lemma~\ref{lem:convergence-of-fluctuations} \\ \footnotesize Convergence of the contribution of killed particles};
		\node[draw=BurntOrange,very thick,text width=3.35cm,text centered] (22) at (-3.1,\SecCin) 
		{Proposition~\ref{prop:convergence-with-translation-gamma_t} \\ \footnotesize 1st version of Thm~\ref{theorem-complete}: one-dimensional, non-conditional, with a shift and for regular $F$\vspace{-.25cm} \rule{3.35cm}{0.4pt} $t_0=t^a$, $a\in (1/3,1/2)$ $\gamma = \frac{1}{2} \log t + \beta_t$};
		
		\def\SecSix{5}
		\node[draw,rounded corners,text width=2.9cm,text centered] (61) at (-.9,\SecSix) 
		{Proposition~\ref{prop:cv_rate_for_bad_functions} Corollary~\ref{cor:rate_of_convergence_as_expectation_for bad_functions} \\ \footnotesize Rate of convergence for $Z_t(F,\Delta)$ for diverging $F$};
		\node[draw,rounded corners,text width=1.7cm,text centered] (62) at (-4,\SecSix) 
		{Lemma~\ref{lem:cv_rate_for_bad_functions_1} \\ \footnotesize Truncation procedure};
		\node[draw,rounded corners,text width=2.4cm,text centered] (64) at (2.9,\SecSix) 
		{Proposition~\ref{prop:from-translation-gamma_t-to-0} \\ \footnotesize Effect of the shift in $Z_t(F,\Delta)$};
		\node[draw,rounded corners,text width=2.2cm,text centered] (65) at (6.4,\SecSix) 
		{Lemma~\ref{lem:sums_along_stopping_line} Corollary~\ref{cor:convergence-of-fluctuations} \\ \footnotesize Convergence of sums along a stopping line starting at $ht$};
		\node[draw,rounded corners,text width=2.2cm,text centered] (67) at (9.4,\SecSix) 
		{Lemma~\ref{lem:sums_along_stopping_line_2} Corollary~\ref{cor:convergence-of-fluctuations-2} \\ \footnotesize Convergence of sums along a stopping line for diverging $F$};
		
		\def\SecSep{2}
		\node[draw=BurntOrange,very thick,text width=3.3cm,text centered] (72) at (2,\SecSep) 
		{Proposition~\ref{prop:weaker_version_theorem_2} \footnotesize Thm~\ref{theorem-2} for regular $F$\vspace{-.25cm} \rule{3.3cm}{0.4pt} $t_0=t$, $\gamma = \frac{1}{2} \log t + \beta_t$};
		\node[draw=BurntOrange,very thick,text width=2.9cm,text centered] (73) at (7,\SecSep) 
		{Proposition~\ref{prop:regularize}  \footnotesize Regularization of $F$\vspace{-.25cm} \rule{2.9cm}{0.4pt} $t_0=(1-\varepsilon)t$, $\gamma = c \log t$, $c>1/2$};
		%
		
		\node[draw,very thick,diamond,aspect=2.5,text width=1.8cm,text centered] (71) at (4,-.3) 
		{Theorem~\ref{theorem-2} \footnotesize};
		\node[draw,very thick,diamond,aspect=2.5,text width=1.8cm,text centered] (12) at (-1,-.3) 
		{Theorem~\ref{theorem-complete} \footnotesize};
		
		\draw[->,>=latex,thick] (21) -- (51);
		\draw[->,>=latex,thick] (21) -- (52);
		\draw[->,>=latex,thick] (21) to[bend left=10] (61.33);
		\draw[->,>=latex,thick,rounded corners=8pt] (21.220) to (4.8,9.4) to (4.8,4.2) to (72);
		\draw[->,>=latex,thick,rounded corners=8pt] (22.280) to (-2.8,2) to (12);
		\draw[->,>=latex,thick] (31) -- (21);
		\draw[->,>=latex,thick] (32) -- (34);
		\draw[->,>=latex,thick] (32.300) -- (43.155);
		\draw[->,>=latex,thick] (32) -- (41);
		\draw[->,>=latex,thick] (34) -- (35);
		\draw[->,>=latex,thick] (34) -- (41);
		\draw[->,>=latex,thick] (34.260) to[out=270,in=120] (0.3,8) to[out=300,in=90] (72.135);
		\draw[->,>=latex,thick,rounded corners=8pt] (35.283) to (1.8,10.4) to (4.5,10.4) to (4.5,3.8) to (73.140);
		\draw[->,>=latex,thick] (36) -- (37);
		\draw[->,>=latex,thick] (37.310) -- (52.120);
		\draw[->,>=latex,thick] (37.295) -- (65);
		\draw[->,>=latex,thick] (41) -- (42);
		\draw[->,>=latex,thick] (42) to[bend left=25] (21);
		\draw[->,>=latex,thick] (42) -- (22);
		\draw[->,>=latex,thick] (43) -- (21);
		\draw[->,>=latex,thick] (51) -- (22);
		\draw[->,>=latex,thick] (52) -- (51);
		\draw[->,>=latex,thick] (61) -- (64);
		\draw[->,>=latex,thick] (61) to[bend left=25] (65);
		\draw[->,>=latex,thick] (61) -- (73.160);
		\draw[->,>=latex,thick] (62) -- (61);
		\draw[->,>=latex,thick] (64.240) -- (72);
		\draw[->,>=latex,thick,rounded corners=8pt] (64) to (-1,2.5) to (12);
		\draw[->,>=latex,thick] (64.280) -- (73.151);
		\draw[->,>=latex,thick] (65) -- (67);
		\draw[->,>=latex,thick] (65.225) -- (72.25);
		\draw[->,>=latex,thick] (67) -- (73);
		\draw[->,>=latex,thick] (71) -- (12);
		\draw[->,>=latex,thick] (72) -- (71);
		\draw[->,>=latex,thick] (73) -- (71);	
	\end{tikzpicture}
	\caption{\label{fig:dependency_graph}Dependencies between the intermediate results leading to the proof of Theorem~\ref{theorem-complete}. Arrows indicate that a result is used in the proof of another result. The proofs of the results in the rectangular, orange boxes implement the strategy outlined in Section~\ref{subsection:scheme_barrier}, with $t_0$ and $\gamma$ as indicated, where $\beta_t \to \infty$ with $\beta_t = o(\log t)$.
	To avoid overloading the diagram, Lemma~\ref{lem:tightness} is not included because this basic result, as well as the inequalities in its proof, are used widely throughout the paper.
	Similarly, results in Appendix \ref{section:technical-results} are not included.}
\end{figure}

We now detail the intermediate results leading to Theorem~\ref{theorem-complete}. Figure~\ref{fig:dependency_graph} provides a graphical description of the dependencies between the results.

In a first part of this paper, we work under the following different assumptions for the function $F \colon \R \to \R$: for some $\kappa \geq 0$, we assume
\begin{enumerate}[label=(A\arabic*$_\kappa$),leftmargin=*]
	\item For any $x > 0$, $\abs{F(x)} \leq \e^{\kappa x}$; \label{ass1}
	\item For any $0 < y \leq x$, $\abs{F(x)-F(y)} \leq (x-y) \e^{\kappa x}$. \label{ass2}
\end{enumerate}
Note that we do not allow divergence at 0 of the function $F$. 
Moreover, it will be convenient to have results which hold uniformly for all functions $F$ satisfying these assumptions for a given~$\kappa$.

A consequence of Assumptions~\ref{ass1} and \ref{ass2} is the following inequality: for some $C = C(\kappa)$,
\begin{equation}
	\label{eq:RF_bound}
	|\mathscr R F(r)| \le C(r\wedge 1), \quad r>0.
\end{equation}
In particular, $\mathscr RF \in \mathcal G(r^{-3/2}\diff r)$.

\paragraph{First step.}
The first result we establish is an estimate on the rate of convergence of $Z_t (F,\Delta)$ towards $\rho(F) Z_\infty$, for a shift $\Delta$ of order at most $\log t$.
It is proved in Section \ref{section:a-first-rough-approach-to-the-result}.
\begin{prop} \label{prop:rate-of-CV-Z_t(F,Delta,t)}
Let $\kappa \geq 0$ and $K>0$.
There exist $C = C(\kappa,K) >0$ and $t_0 = t_0(\kappa,K) > 0$, such that
for any function $F \colon \R \to \R$ satisfying Assumptions \ref{ass1}-\ref{ass2}, $t \geq t_0$, $\Delta \in [0,K \log t]$ and $\delta > 0$, we have 
\begin{align*}
\Pp{\abs{Z_t(F,\Delta) - \rho(F) Z_\infty} \geq \delta}
& \leq t^{-K} + \frac{C(\log t)^2}{\delta t^{1/3}}.
\end{align*}
\end{prop}
The proof relies on the scheme presented in Section \ref{subsection:scheme_barrier} with $t_0 = t^{2/3}$ and $\gamma = \frac{1}{6} \log t$.
Moreover, we work with the centered function $F - \rho(F)$, which ensures that the quantity $\widetilde{Z}_t^{t_0,\gamma} (F- \rho(F),\gamma)$ has small conditional expectation and variance given $\sF_{t_0}$.
Contributions of killed particles are bounded crudly via a first moment argument. The control of $Z_t-Z_\infty$ is contained in Proposition~\ref{prop:control-rate-of-cv-Z}, which follows from \cite{maillardpain2019}.

\paragraph{Second step.}
In Section~\ref{section:using-previous-results}, we prove the following fluctuation result.
Compared with Theorem~\ref{theorem-complete}, it is a one-dimensional convergence which does not hold conditionally on $\sF_{\ep t}$, it deals with a translated version of $Z_t(F)$, and requires different assumptions on $F$. Recall the definitions of $\mathscr R F$ and $M_{Z_\infty}$ from \eqref{eq:def R F} and \eqref{eq:def_M_Z_infty}.
\begin{prop} \label{prop:convergence-with-translation-gamma_t}
	Let $\gamma = \frac{1}{2} \log t + \beta_t$, where $\beta_t \to \infty$ and $\beta_t = o(\log t)$ as $t \to \infty$.
	Let $\kappa \geq 0$.
	For any function $F \colon \R \to \R$ satisfying \ref{ass1}-\ref{ass2}, we have 
	\begin{align*}
	\sqrt{t} 
	\left( Z_t (F,\gamma) - \rho(F) Z_t(1,\gamma)
	- \frac{\beta_t}{\sqrt{t}}  \int_0^1 \mathscr R F(r) \frac{Z_\infty}{\sqrt{2\pi}} \frac{1}{r^{3/2}}\diff r
	\right)
	\xrightarrow[t\to\infty]{\text{(law)}} \int_0^1 \mathscr R F(r) M_{Z_\infty}(\diff r).
	\end{align*}
\end{prop}
Proposition~\ref{prop:convergence-with-translation-gamma_t} is proved using again the scheme of Section \ref{subsection:scheme_barrier}, with $t_0 = t^{a}$ with $a\in (1/3,1/2)$ and $\gamma = \frac{1}{2} \log t + \beta_t$ and working with the centered function $F - \rho(F)$.
Proposition~\ref{prop:rate-of-CV-Z_t(F,Delta,t)} is used in order to improve the estimates of the fluctuations, i.e.\@ the contributions of killed particles.
Indeed, conditionally on  $\sF_{\cL^{t_0,\gamma}}$, the contributions of killed particles are independent random variables, which can be approximated using Proposition \ref{prop:rate-of-CV-Z_t(F,Delta,t)} by independent rescaled copies of $Z_\infty$.
The convergence in law of the sum of these independent variables is then handled via a calculation involving characteristic functions.

\paragraph{Third step.} Having obtained Proposition~\ref{prop:convergence-with-translation-gamma_t}, in order to obtain  Theorem~\ref{theorem-complete} it remains to:
\begin{itemize}
	\item extend the result to functions $F$ diverging at 0,
	\item remove the shift $\gamma$,
	\item allow for conditioning with respect to $\sF_{\ep t}$, with $\ep\to 0$,
	\item prove a functional convergence.
\end{itemize}
While it would have been possible to attack these points independently, we have deemed more efficient to treat them in parallel, in order to avoid having to reprove several results under different assumptions.

To this end, Section \ref{section:preliminary_results_towards_the_final_theorem} gathers some technical results which do not follow the general scheme outlined in Section~\ref{subsection:scheme_barrier} above. 
More precisely:
\begin{itemize}
	\item Proposition \ref{prop:cv_rate_for_bad_functions} in Section~\ref{subsection:bad_functions} extends Proposition \ref{prop:rate-of-CV-Z_t(F,Delta,t)} to functions $F$ diverging at 0, via a truncation argument of the function near 0.
	\item Proposition \ref{prop:from-translation-gamma_t-to-0} in Section~\ref{subsection:effect_of_a_shift} compares precisely $Z_t(F,\gamma)$ and $Z_t(F)$: their difference is of order $\gamma/\sqrt{t}$ and is responsible for the $(\log t)/\sqrt{t}$ corrective term\footnote{\label{foot:shift}This explains in particular why there is no such term for the fluctuations of the additive martingale in Corollary \ref{cor:fluctuations-for-W}: indeed $W_t$ is invariant by shifts (that is if $F(x) = 1/x$, $Z_t(F,\gamma) = Z_t(F)$, up to the contribution of particles below $\gamma$ at time $t$).} appearing in Theorem~\ref{theorem-complete}. 
	\item Section~\ref{subsection:sums_along_stopping_line_2} contains two results (Lemma~\ref{lem:sums_along_stopping_line} and Lemma~\ref{lem:sums_along_stopping_line_2}) which will be used to estimate the contribution of particles hitting the barrier after time $ht$, for some fixed $h>0$. This prepares conditioning with respect to $\sF_{\ep t}$, with $\ep\to 0$, and also enables to treat functions $F$ diverging at 0 (Lemma~\ref{lem:sums_along_stopping_line_2}).
\end{itemize}
Note that the proofs of Proposition~\ref{prop:from-translation-gamma_t-to-0} and Lemma~\ref{lem:sums_along_stopping_line} rely on Proposition~\ref{prop:cv_rate_for_bad_functions}, \emph{even for $F$ which does not diverge at 0}, because we apply Proposition~\ref{prop:cv_rate_for_bad_functions} to auxiliary functions diverging at 0.

Theorem \ref{theorem-complete} is proved in Section \ref{section:proof_theorem}. We first introduce for every function $F$ and every $h\in[0,1)$ a certain function $F^h$, which can be seen as an averaged version of $F$. We then state Theorem~\ref{theorem-2}, which provides a limit result for $Z_{at}(F) - Z_t(F^{1/a})$, $a>1$, under the same assumptions on $F$ as in Theorem~\ref{theorem-complete}. We first prove it for functions $F$ which do not diverge at $0$ (Proposition \ref{prop:weaker_version_theorem_2}). For this, we apply the scheme of Section \ref{subsection:scheme_barrier}, with $t_0 = t$ and $\gamma = \frac{1}{2} \log t + \beta_t$ where $\beta_t \to \infty$ slowly, to decompose $Z_{at}(F,\gamma)$ simultaneously for several values of $a$ and follow the proof of Proposition~\ref{prop:convergence-with-translation-gamma_t} to deal with the contributions of killed particles, relying on the auxiliary results from Section~\ref{section:preliminary_results_towards_the_final_theorem}.
To generalize the result to functions $F$ diverging as $x^{-\alpha}$ at~0, we prove in Proposition~\ref{prop:regularize} that $Z_t(F,\gamma)$ is close to $Z_{(1-\ep)t}(F^{1-\ep},\gamma)$ (up to some corrective term), applying again the scheme of Section \ref{subsection:scheme_barrier}, but with $t_0 = (1-\ep)t$ and $\gamma = c \log t$, where $c$ has to be chosen closer and closer to $3/2$ as $\alpha$ approaches 2. Here, the contribution of killed particles is negligible in the limit $t\to\infty$ and then $\ep \to 0$. As the function $F^{1-\ep}$ does not diverge at zero, we can apply Proposition \ref{prop:weaker_version_theorem_2} to this function to obtain Theorem~\ref{theorem-2}.

In order to deduce Theorem~\ref{theorem-complete} from Theorem~\ref{theorem-2}, we write the difference $Z_{at}(F) - \rho(F)Z_\infty$ as the sum of three quantities as follows:
\[
Z_{at}(F) - \rho(F)Z_\infty = (Z_{at}(F) -Z_{\ep t}(F^{\ep/a})) + (\rho(F)Z_{\ep t} - \rho(F)Z_\infty) + (Z_{\ep t}(F^{\ep/a}) - \rho(F)Z_{\ep t}). 
\]
We apply Theorem~\ref{theorem-2} to the first and second summand (with $a=\infty$ for the second). Proposition~\ref{prop:convergence-with-translation-gamma_t} is crucially used in order to show that the third summand is negligible as $t\to\infty$, then $\ep\to 0$. Theorem~\ref{theorem-complete} follows.

\section{Preliminary results}
\label{section:preliminary-results}

\subsection{Some results concerning branching Brownian motion}

In the sequel, we allow the BBM to start at an arbitrary point $x \in \R$, in which case we write $\P_x$ and $\E_x$ instead of $\P$ and $\E$.
Under our settings for the BBM, the \textit{many-to-one formula} is stated as follows (see e.g.\@ \cite[Proposition~4.1]{maillardpain2019}):
for any $x \in \R$, $t \geq 0$ and any measurable function $\Theta \colon \cC([0,t]) \to \R_+$, where $\cC([0,t])$ denotes the set of continuous function from $[0,t] \to \R$, we have
\begin{align} \label{eq:many-to-one}
\Eci{x}{\sum_{u\in\cN(t)} \e^{-X_u(t)} \Theta(X_u(s), s\in[0,t])}
& = \e^{-x} \Eci{x}{\Theta(B_s, s\in[0,t])},
\end{align}
where $(B_s)_{s \geq 0}$ denotes a standard Brownian motion, starting from $x$ under $\P_x$.
We use repetitively the following consequence, using the link \eqref{eq:link-between-R-and-B} between the Brownian motion staying positive and the 3-dimensional Bessel process: for any $x >0$, $t \geq 0$ and any measurable function $\varphi \colon \R \to \R_+$, we have 
\begin{align} \label{eq:many-to-one-bessel}
\Eci{x}{\sum_{u\in\cN(t)} \e^{-X_u(t)} 
	\varphi(X_u(t)) \1_{\forall s \in [0,t], X_u(s) > 0}}
= x \e^{-x} \Eci{x}{\frac{\varphi(R_t)}{R_t}}
= x \e^{-x} \Eci{x/\sqrt{t}}{\frac{\varphi(\sqrt{t} R_1)}{\sqrt{t} R_1}},
\end{align}
where $(R_s)_{s \geq 0}$ denotes a 3-dimensional Bessel process, starting from $x$ under $\P_x$, and we used its scaling property in the second equality.
Since we assumed that $\E[L^2] < \infty$, the \textit{many-to-two formula} holds (see e.g.\@ \cite[Lemma~10]{bramson78}). We only state it here in a special case, with a barrier at zero: for any $x >0$, $t \geq 0$ and any measurable function $\varphi \colon \R \to \R_+$, we have
\begin{align}
& \Eci{x}{\left( \sum_{u\in\cN(t)} \e^{-X_u(t)} \varphi(X_u(t)) 
	\1_{\forall s \in [0,t], X_u(s) > 0} \right)^2} \nonumber \\
\begin{split}
& = K \e^{-x} \int_0^t \diff r 
	\int_0^\infty \Eci{y}{\sum_{u\in\cN(t-r)} \e^{-X_u(t-r)} \varphi(X_u(t-r)) 
	\1_{\forall s \in [0,t-r], X_u(s) > 0}}^2 
	\e^y q_r(x,y) \diff y \\
& \relphantom{=} {} 
+ \e^{-x} \int_0^\infty \e^{-y} \varphi(y)^2 q_t(x,y) \diff y,
\end{split} \label{eq:many-to-two-formula}
\end{align}
where $q_r(x,y) \coloneqq (2 \pi r)^{-1/2} (\e^{-(x-y)^2/2r} - \e^{-(x+y)^2/2r})$ for any $x,y > 0$ and $K=\E[L(L-1)]$. 
Note that $q_r(x,\cdot)$ is the density of a Brownian motion at time $r$ starting from $x$ and killed at 0.

We now state two following useful bounds concerning the minimum of the BBM.
The first one concerns the global minimum of the BBM and can be found in \cite[Eq.\@ (D.1)]{maillardpain2019}: for any $M > 0$, we have
\begin{equation} \label{eq:global-min-of-the-BBM}
\Pp{\exists s \geq 0, \min_{u \in \cN(s)} X_u(s) \leq - M}
\leq \e^{-M}.
\end{equation} 
The second one deals with the minimum at a given time, see \cite[Proposition 3]{bramson78}: there exists $C>0$ such that, for any $s \geq 2$ and $x \in (-\infty,\sqrt{s}]$,
\begin{equation} \label{eq:local-min-of-the-BBM}
\Pp{\min_{u \in \cN(s)} X_u(s) \leq \frac{3}{2} \log s - x}
\leq C (1+x^2) \e^{-x}.
\end{equation}
We conclude this subsection with the following concentration result for the derivative martingale, which is a slight modification of \cite[Proposition 1.5]{maillardpain2019}.
\begin{prop} \label{prop:control-rate-of-cv-Z}
	For any $K>0$, there exists $C = C(K) > 0$ such that, for any $\delta>0$ and $t \geq 2$, 
	\begin{equation} \label{eq:control-rate-of-cv-Z}
	\Pp{\abs{Z_t-Z_\infty} \geq \delta}
	\leq t^{-K} + \frac{C (\log t)^2}{\delta \sqrt{t}}.
	\end{equation}
\end{prop}
\begin{proof}
	Proposition 1.5 in \cite{maillardpain2019} is restricted to the case $\delta \in (0,1)$, but we explain here how to modify its proof to establish the desired result.
	Equations (D.3)-(D.4)-(D.5) in \cite{maillardpain2019} hold for any $\delta > 0$ with an additional term $t^{-K}$ on the right-hand side and a constant $C$ depending on~$K$: this is proved in the same way as in \cite{maillardpain2019}, but with $L= K \log t$.
	Then the result follows by replacing Equation (D.6) in \cite{maillardpain2019} by the inequality
	\begin{equation} \label{eq:newD6}
	\Pp{F_{\mathrm{bad}}^{t,\gamma_t} \geq \delta}
	\leq t^{-K} + \frac{C}{\delta \sqrt{t}},
	\end{equation}
	where with notation from the present paper, 
	\[
	F_{\mathrm{bad}}^{t,\gamma_t}
	\coloneqq \sum_{u\in\cN(t)} \e^{-X_u(t)} \1_{X_u(t) \leq \log t} Z_\infty^{(u)},
	\]
	with $Z_\infty^{(u)}$ the limit of the derivative martingale of the BBM rooted at $u$ (see \eqref{eq:def_Z_infty^u} with here $T_u = t$).
	We now have to prove~\eqref{eq:newD6}.
	For $M >0$, let $A_M \coloneqq \{\min_{s\geq 0} \min_{u \in \cN(s)} X_u(s) > - M \}$.
	Then, we have
	\begin{equation} \label{eq:Fbad}
	\Ec{F_{\mathrm{bad}}^{t,\gamma_t} \1_{A_M}}
	\leq \Ec{ \sum_{u\in\cN(t)} \e^{-X_u(t)} \1_{X_u(t) \leq \log t} \1_{\forall s \in [0,t], X_u(s) > - M}}
	\cdot \Ec{ Z_\infty \1_{A_{M+\log t}}}.
	\end{equation}
	On the one hand, using the many-to-one formula \eqref{eq:many-to-one-bessel}, the first expectation on the right-hand side of \eqref{eq:Fbad} equals 
	\begin{equation}  \label{eq:Fbad2}
	M \Eci{M}{\frac{1}{R_t} \1_{R_t \leq M+\log t}}
	\leq \frac{M}{\sqrt{t}} \Ec{\frac{1}{R_1} \1_{R_1 \leq (M+\log t)/\sqrt{t}}}
	\leq \frac{C M (M+\log t)^2}{t^{3/2}},
	\end{equation}
	using, in the first inequality, the scaling property of the Bessel process and its monotonicity w.r.t.\@ the inital condition and, in the second inequality, its density given in \eqref{eq:density-of-R_1-under-P}.
	On the other hand, the following facts have been established in \cite[Theorem 9, Eq.\@ (20), Theorem 13(ii)]{kyprianou2004}: on the event $A_M$, we have a.s.\@
	\[
		Z_\infty 
		= \lim_{s\to \infty} \sum_{u\in\cN(s)} (X_u(s)+M)\e^{-X_u(s)} \1_{\forall r \in [0,s], X_u(r) > - M},
	\]
	and the quantity inside the limit on the right-hand side is a martingale with mean $M$ and converging in $L^1$. It follows that $\Ec{Z_\infty \1_{A_M}} \leq M$ for any $M > 0$.
	Combining this with \eqref{eq:Fbad} and \eqref{eq:Fbad2}, we get $\E[F_{\mathrm{bad}}^{t,\gamma_t} \1_{A_M}] \leq C (M+\log t)^4 t^{-3/2}$.
	Taking $M = K \log t$ and noting that $\P(A_M^c) \leq t^{-K}$ by \eqref{eq:global-min-of-the-BBM}, this shows \eqref{eq:newD6} and concludes the proof.
\end{proof}

\subsection{Branching Brownian motion killed at 0}
\label{subsection:BBM_killed-at-0}

In this subsection, we prove preliminary results controlling first and second moments for the following functional of BBM killed at 0, defined for $\Delta \in \R$ and $s \geq 0$ by
\begin{align}
\widetilde{Z}_s (F,\Delta)
& \coloneqq \sum_{u \in \cN(s)} (X_u(s)-\Delta)_+ \e^{-X_u(s)} 
	F \left( \frac{X_u(s)-\Delta}{\sqrt{s}} \right)
	\1_{\forall r \in [0,s], X_u(r)>0}. 
	\label{eq:def-tildeZ_s(F,Delta)}
\end{align}
Note that, with notation from Section \ref{subsection:scheme_barrier}, we have $\widetilde{Z}_s (F,\Delta) = \widetilde{Z}_s^{0,0} (F,\Delta)$.
If $\Delta = 0$, we simply write $\widetilde{Z}_s (F) \coloneqq \widetilde{Z}_s (F,0)$.

Moreover, for any function $F \colon \R \to \R$ and $h \in [0,1)$, we introduce the auxiliary function 
\begin{equation}
	\label{eq:def_F_h}
	F_h(x) \coloneqq F\left(\sqrt{1-h}\cdot x\right),\quad x\in\R.
\end{equation}
This transformed function will appear repetitively for the following reason:
the contribution of the progeny of a particle $u \in \cN(ht)$ to $\widetilde{Z}_t(F,\Delta)$, given that $X_u(ht) = x$, has the same distribution as $\widetilde{Z}_{(1-h)t}(F_h,\Delta)$ under $\P_x$.
For later reference, we also note that \eqref{eq:def R F} becomes
\begin{equation}
	\label{eq:R F with F_h}
	\mathscr R F(r) = \rho(F_r)\1_{r<1} - \rho(F),\quad r>0.
\end{equation}
\begin{lem} \label{lem:first-moment-Z_s(F,Delta))}
Let $\kappa \geq 0$.
There exists a constant $C = C(\kappa) >0$, such that
for any function $F \colon \R \to \R$ satisfying Assumptions \ref{ass1}-\ref{ass2} and any $x>0$, $h \in [0,1)$, $s > 0$ and $\Delta \geq 0$, 
\begin{align*}
\abs{\Eci{x}{\widetilde{Z}_s (F_h-\rho(F),\Delta)}}
& \leq C x \e^{-x} 
	\left(
	\left(\frac{x^2}{s} \wedge 1 \right) \e^{\kappa x\sqrt{(1-h)/s}}
	+ \frac{\Delta}{\sqrt{s}}
	+ h
	\right), \\
\Eci{x}{\widetilde{Z}_s (\abs{F_h},\Delta)}
& \leq C x \e^{-x} \e^{\kappa x\sqrt{(1-h)/s}}.
\end{align*}
\end{lem}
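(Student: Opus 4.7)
The plan is to apply the many-to-one formula \eqref{eq:many-to-one-bessel} with $\varphi(y) = (y - \Delta)_+ \bigl[F((y-\Delta)/\sqrt{t}) - \E[F(R_1)]\bigr]$, which produces
\[
\E_x\bigl[\widetilde{Z}_r(F - \E[F(R_1)], \Delta)\bigr]
= x e^{-x}\, \E_x\left[\frac{(R_r - \Delta)_+}{R_r}\Bigl(F\bigl((R_r - \Delta)/\sqrt{t}\bigr) - \E[F(R_1)]\Bigr)\right].
\]
From this point the two bounds diverge: the second (absolute value inside) uses crude estimates, while the first exploits the cancellation built into the centering $\E[F(R_1)]$ via a careful telescoping.

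For the second bound, I take absolute values inside the expectation. Assumption~\ref{ass1} together with $\Delta \ge 0$ gives $|F((R_r-\Delta)/\sqrt{t})| \le e^{\kappa R_r/\sqrt{t}}$, and Jensen yields $|\E[F(R_1)]| \le C(\kappa)$. Writing $(R_r - \Delta)_+/R_r = 1 - (\Delta \wedge R_r)/R_r$, the ``$1$'' contribution is bounded by $\E_x[e^{\kappa R_r/\sqrt t} + C] \le Ce^{\kappa x/\sqrt t}$ through a Gaussian moment bound using $r \le t$. The $(\Delta \wedge R_r)/R_r$ contribution is bounded by $\Delta \cdot \E_x[(e^{\kappa R_r/\sqrt t} + C)/R_r]$, and the Bessel-3 estimate $\E_x[1/R_r] \le C r^{-1/2}(1 \wedge \sqrt{r}/x)$ from Section~\ref{section:technical-results} (as already used in the proof of Lemma~\ref{lem:first-moment-Z_s(1,Delta))}) then yields the $\tfrac{\Delta}{\sqrt r}(1 \wedge \tfrac{\sqrt r}{x})$ term.

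For the first bound, the key observation is that in the ideal configuration where the Bessel process starts at $0$, $\Delta = 0$ and $r = t$, the expectation collapses to $\E_0[F(R_1)] - \E[F(R_1)] = 0$. The strategy is to telescope between the actual parameters $(x, \Delta, r)$ and this ideal one. After using the Bessel-3 scaling $R_r \stackrel{d}{=} \sqrt r\, R'_1$ with $R'$ starting at $y := x/\sqrt r$, the integrand becomes $\frac{(R'_1 - \Delta/\sqrt r)_+}{R'_1} \bigl(F(\sqrt{r/t}\, R'_1 - \Delta/\sqrt t) - \E[F(R_1)]\bigr)$, and the telescoping splits the error into four pieces: (i) turning off the multiplicative factor from $\frac{(R'_1 - \Delta/\sqrt r)_+}{R'_1}$ to $1$, bounded by $(\Delta/\sqrt r)/R'_1$ times an exponential, producing the $\Delta/\sqrt r$ contribution; (ii)--(iii) removing the shift $\Delta/\sqrt t$ and the scaling factor $\sqrt{r/t}$ from the argument of $F$, both handled via assumption~\ref{ass2}, producing $\Delta/\sqrt t \le \Delta/\sqrt r$ and $(1 - \sqrt{r/t})\,\E_y[R'_1 e^{\kappa R'_1}] \lesssim (t-r)/\sqrt{tr}$ respectively; (iv) moving the Bessel starting point from $y > 0$ to $0$, which is what produces the $(x^2/r)(1 + x^4/r^2) e^{\kappa x/\sqrt t}$ term.

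The main obstacle is piece (iv). From the explicit Bessel-3 transition density one obtains $p_1^{(y)}(z) - p_1^{(0)}(z) = p_1^{(0)}(z)\bigl[e^{-y^2/2}\sinhc(yz) - 1\bigr]$ with $p_1^{(0)}(z) = \sqrt{2/\pi}\, z^2 e^{-z^2/2}$. The bracket vanishes at $y = 0$, and a Taylor expansion in $y$ (tracking the polynomial blow-up of $\sinhc$ for unbounded $yz$) gives a pointwise bound of the form $y^2(1 + z^2) + y^4(1 + z^4)$. Integrating against an integrand that grows as $e^{\kappa z}$ (by assumption~\ref{ass1}), performing the Gaussian integrals, and substituting $y = x/\sqrt r$ yields the desired $(x^2/r)(1 + x^4/r^2) e^{\kappa x/\sqrt t}$ shape after absorbing the rescaling $\sqrt{r/t}$ in the argument of $F$ into the exponential. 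The precise Bessel-3 starting-point and moment estimates needed will be drawn from Section~\ref{section:technical-results}.
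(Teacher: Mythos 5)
Your high-level plan (many-to-one, Bessel scaling, then telescope toward the ideal configuration) is the same as the paper's, which packages the telescoping into Lemma~\ref{lem:control-eta-small}. However, your telescoping is in the wrong order, and this is not cosmetic: it breaks the claimed bound. You shift the Bessel starting point from $y=x/\sqrt{r}$ to $0$ \emph{last} (your piece (iv)), so pieces (i)--(iii) are still expectations under $\E_y$. Since $F$ only satisfies a one-sided exponential bound, those expectations are \emph{not} $O(1)$: for example, your piece (iii) is controlled by $(1-\sqrt{r/t})\,\E_y[R_1' e^{\kappa R_1'}]$, and $\E_y[R_1' e^{\kappa R_1'}]\gtrsim y\,e^{\kappa y} = (x/\sqrt{r})\,e^{\kappa x/\sqrt{r}}$, which can be enormous (and has $e^{\kappa x/\sqrt{r}}$, not even $e^{\kappa x/\sqrt{t}}$). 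So the claim ``$(1-\sqrt{r/t})\E_y[R'_1 e^{\kappa R'_1}]\lesssim (t-r)/\sqrt{tr}$'' is simply false, and the same objection applies to your pieces (i) and (ii), which acquire an $e^{\kappa x/\sqrt{t}}$ that you drop. The paper avoids exactly this by doing the starting-point shift \emph{first} (step~1 of the proof of Lemma~\ref{lem:control-eta-small}): once the starting point is $0$, the moments $\E_0[(1+R_1)\,e^{\kappa R_1}]$ appearing in the shift- and scaling-removal steps are genuine constants, and the $\eta$ and $(1-\alpha)$ terms emerge without any $e^{\kappa x/\cdot}$ prefactor, matching the stated bound.

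There is also an error in the pointwise estimate in piece (iv). The bracket $e^{-y^2/2}\sinhc(yz)-1$ is \emph{not} bounded by $y^2(1+z^2)+y^4(1+z^4)$ uniformly in $z$: for fixed $y>0$ it grows like $e^{yz}/(yz)$ as $z\to\infty$ (try $y=1$, $z=20$). A correct bound (the one the paper uses) keeps an $e^{yz}$ factor, e.g.\@ $\bigl|e^{-y^2/2}\sinh(zy)-zy\bigr|\le e^{-y^2/2}(zy)^3e^{zy}+zy^3$. That $e^{yz}$ is not a nuisance to be swept away — after completing the square against $e^{\kappa\alpha z - z^2/2}$ it shifts the Gaussian center to $y+\kappa\alpha$, and this is precisely where the $e^{\kappa\alpha y}=e^{\kappa x/\sqrt{t}}$ in the lemma comes from. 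Without it, your integration step cannot produce the correct exponential. (A minor additional remark: in the second bound, distributing $\frac{(R_r-\Delta)_+}{R_r}=1-\frac{\Delta\wedge R_r}{R_r}$ against $|F((R_r-\Delta)/\sqrt t)-\E[F(R_1)]|$ needs an indicator $\mathbf{1}_{R_r>\Delta}$ carried along, since $F$ is uncontrolled at nonpositive arguments; the paper sidesteps this by bounding $\E_x[|\widetilde Z_r(F,\Delta)|]\le\E_x[\widetilde Z_r(|F|,\Delta)]$ and treating $\E[F(R_1)]\widetilde Z_r(1,\Delta)$ separately via Lemma~\ref{lem:first-moment-Z_s(1,Delta))}.)
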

\begin{proof}
Using the many-to-one formula \eqref{eq:many-to-one-bessel}, we have
\begin{align}
\Eci{x}{\widetilde{Z}_s (F_h,\Delta)} 
& = x \e^{-x} 
	\Eci{x/\sqrt{s}}{
	\frac{(R_1 - \frac{\Delta}{\sqrt{s}})_+}{R_1}
	F \left( \sqrt{1-h} 
			\left( R_1 - \frac{\Delta}{\sqrt{s}} \right) \right)}. \label{ma}
\end{align}
Then, we use the following decomposition: 
$\widetilde{Z}_s (F_h-\rho(F),\Delta) 
= (\widetilde{Z}_s (F_h,\Delta) - \rho(F) x \e^{-x}) 
- \rho(F) (\widetilde{Z}_s (1,\Delta) - x \e^{-x})$.
On the one hand, applying \eqref{ma} and Lemma \ref{lem:control-eta-small} with $\alpha=\sqrt{1-h}$, $y=x/\sqrt{s}$, $\eta = \Delta/\sqrt{s}$, as well as the inequality $1-\sqrt{1-h} \leq h$, we have
\begin{align}
\abs{\Eci{x}{\widetilde{Z}_s (F_h,\Delta)} - \rho(F) x \e^{-x}} 
& \leq C x \e^{-x} 
	\left(
	\left(\frac{x^2}{s} \wedge 1 \right) \e^{\kappa x\sqrt{(1-h)/s}}
	+ \frac{\Delta}{\sqrt{s}}
	+ h
	\right), \label{mb}
\end{align}
On the other hand, noting that $\rho(F) \leq C$ by Assumption \ref{ass1} and applying \eqref{mb} with $F=1$, we get a similar bound for the term $\rho(F) (\widetilde{Z}_s (1,\Delta) - x \e^{-x})$.
This proves the first bound of the lemma.

We now deal with the second bound.
Applying \eqref{ma} with $\abs{F_h}=\abs{F}_h$ instead of $F_h$ and then Assumption \ref{ass1}, we get
\begin{align}
\Eci{x}{\widetilde{Z}_s (\abs{F_h},\Delta)}
\leq x \e^{-x} 
	\Eci{x/\sqrt{s}}{\e^{\kappa \sqrt{1-h} R_1}} 
\leq C x \e^{-x} \e^{\kappa x\sqrt{(1-h)/s}}, \label{mi}
\end{align}
by Lemma \ref{lem:control-E_y[R_1^-alpha exp(lambdaR_1)]}. 
This proves the second bound.
\end{proof}
\begin{lem} \label{lem:tightness}
	Let $\kappa \geq 0$.
	Let $F \colon \R \to \R$ be such that $\abs{F(x)} \leq (1+x^{-1}) \e^{\kappa x}$ for any $x>0$.
	Then, the family $(Z_s(F,\gamma))_{s\geq 1,\gamma\geq 0}$ is tight.
\end{lem}
\begin{proof} 
Using \eqref{ma} and then Lemma \ref{lem:control-E_y[R_1^-alpha exp(lambdaR_1)]}, we get, for any $x,s>0$ and $\Delta \geq 0$,
\begin{align*}
\Eci{x}{\abs{\widetilde{Z}_s (F,\Delta)}}
\leq \Eci{x}{\widetilde{Z}_s (\abs{F},\Delta)}
\leq x \e^{-x} 
\Eci{x/\sqrt{s}}{(1+R_1^{-1})\e^{\kappa R_1}} 
\leq C(\kappa) x \e^{-x} \e^{\kappa x/\sqrt{s}}.
\end{align*}
It follows that there exists $C = C(\kappa)> 0$ such that, for $s,M > 0$ and $\gamma \geq 0$, we have
\begin{equation} \label{eq:bound-Z_s(F,gamma,s)-with-barrier-at--M}
\Ec{\abs{Z_s(F,\gamma)} 
	\1_{\forall q \in [0,s], \min_{u \in \cN(q)} X_u(q) > - M}}
\leq \e^M \Eci{M}{\abs{\widetilde{Z}_s(F,\gamma+M)}}
\leq CM \e^{\kappa M/\sqrt{s}}.
\end{equation}
Moreover, it follows from \eqref{eq:global-min-of-the-BBM} that $\P(\exists q \in [0,s], \min_{u \in \cN(q)} X_u(q) \leq - M) \leq \e^{-M}$.
Therefore, for any $s,L,M > 0$,
\begin{equation} \label{eq:bound-Z_s(F,gamma,s)-tightness}
\Pp{\abs{Z_s(F,\gamma)} \geq L} \leq \e^{-M} + CM\e^{\kappa M/\sqrt{s}} L^{-1}.
\end{equation}
The tightness of $(Z_s(F,\gamma))_{s\geq 1,\gamma\geq 0}$ follows.
\end{proof}
\begin{lem} \label{lem:second-moment-Z_s(F,t,Delta)}
Let $\kappa \geq 0$. 	
There exists a constant $C = C(\kappa) >0$, such that
for any function $F \colon \R \to \R$ satisfying Assumptions \ref{ass1}-\ref{ass2} and any $x>0$ and $s \geq 9 \kappa^2\vee 1$,
\begin{align*}
	\Eci{x}{\widetilde{Z}_s (F-\rho(F))^2}
	& \leq C \e^{-x} \frac{x}{\sqrt{s}}, \\
	\Eci{x}{\widetilde{Z}_s (F)^2}
	& \leq C \e^{-x} \left( 1 + \frac{x}{\sqrt{s}} \right).
\end{align*}
\end{lem}
\begin{proof} We start by proving the first inequality.
For brevity, we write $\overline{F} \coloneqq F-\rho(F)$ and therefore $\overline{F}_h \coloneqq (\overline{F})_h =  F_h-\rho(F)$ for $h \in [0,1)$ (note that $(\overline{F})_h \neq \overline{(F)_h}$).
It follows from the many-to-two formula~\eqref{eq:many-to-two-formula} that, with $q_r(x,y) \coloneqq (2 \pi r)^{-1/2} (\e^{-(x-y)^2/2r} - \e^{-(x+y)^2/2r})$,
\begin{equation}
\begin{split}
\Eci{x}{\widetilde{Z}_s (\overline{F})^2}
& = K \e^{-x} \int_0^s \diff r 
	\int_0^\infty \Eci{y}{\widetilde{Z}_{s-r} (\overline{F}_{r/s})}^2 
	\e^y q_r(x,y) \diff y \\
& \relphantom{=} {} 
+ \e^{-x} \int_0^\infty \e^{-y} 
	\left( y \overline{F} \left( \frac{y}{\sqrt{s}} \right)\right)^2 
	q_s(x,y) \diff y.
\end{split} \label{md}
\end{equation}
We first deal with the second term on the right-hand side of \eqref{md}.
Using Assumption \ref{ass1} and $q_s(x,y) \leq \sqrt{2/\pi} xy/s^{3/2}$, we get
\begin{align*}
\int_0^\infty \e^{-y} 
	\left(y \overline{F} \left( \frac{y}{\sqrt{s}} \right)\right)^2 
	q_s(x,y) \diff y
\leq C \int_0^\infty \e^{-y} \left( y \e^{\kappa y/\sqrt{s}} \right)^2
	\frac{xy}{s^{3/2}} \diff y
\leq \frac{Cx}{s^{3/2}}
\leq \frac{Cx}{\sqrt{s}},
\end{align*}
where we used that $\kappa / \sqrt{s} \leq 1/3$ and that $s\ge 1$.

Now, we deal with the first term in the right-hand side of \eqref{md}: for this, we split the integral on $[0,s]$ in two pieces.
Let start with the part $r \in [0, s/2]$.
Using the first bound of Lemma \ref{lem:first-moment-Z_s(F,Delta))} and that $s-r \geq s/2$, we have
\begin{equation}
\abs{\Eci{y}{\widetilde{Z}_{s-r} (\overline{F}_{r/s})}} 
\leq C y \e^{-y} 
	\left(
	\frac{y^2}{s}  \e^{\kappa y/\sqrt{s}}
	+ \frac{r}{s}
	\right)
\leq \frac{C}{s} y \e^{-y} 
	\left( y^2 \e^{y/3} + r \right), \label{bba}
\end{equation}
using that $\kappa / \sqrt{s} \leq 1/3$.
Therefore, 
\begin{align}
& \int_0^{s/2} \diff r 
	\int_0^\infty \Eci{y}{\widetilde{Z}_{s-r} (\overline{F}_{r/s})}^2 
	\e^y q_r(x,y) \diff y \nonumber \\
& \leq \frac{C}{s^2} \int_0^{s/2} \diff r 
	\int_0^\infty y^2 \e^{-y} 
	\left( y^4 \e^{2y/3}+ r^2 \right) 
	q_r(x,y) \diff y \nonumber \\
& \leq \frac{C}{s^2}
	\int_0^\infty y^2 \e^{-y} 
	\left(
	y^4 \e^{2y/3} \int_0^{s/2} q_r(x,y) \diff r
	+ \int_0^{s/2} r^2 q_r(x,y) \diff r 
	\right)
	\diff y. \label{eq:bound_first_part_2nd}
\end{align}
Then, on the one hand, we have, using that $q_r (x,y) \leq \sqrt{2/\pi} r^{-3/2} xy$,
\begin{align*}
\int_0^{s/2} r^2 q_r(x,y) \diff r 
\leq \int_0^{s/2} r^2 \sqrt{\frac{2}{\pi}} \frac{xy}{r^{3/2}} \diff r
\leq C xy s^{3/2},
\end{align*}
and, on the other hand, we have the Green's function identity $\int_0^\infty q_r(x,y) \diff r = 2 (x\wedge y) \leq 2x$.
Thus, it follows that \eqref{eq:bound_first_part_2nd} is at most
\begin{equation} \label{eq:first-part}
\frac{Cx}{s^2} \int_0^\infty y^2 \e^{-y} 
	\left( y^4 \e^{2y/3} + y s^{3/2} \right)
	\diff y
\leq \frac{Cx}{s^2} \left( 1 + s^{3/2} \right)
\leq C \frac{x}{\sqrt{s}}, 
\end{equation}
using that $s \geq 1$. This concludes our work with the part $r \in [0,s/2]$.

Now, we deal with the part $r \in [s/2, s]$ and have, using the second bound of Lemma \ref{lem:first-moment-Z_s(F,Delta))} (note that $\overline{F}/(2 \E[\e^{\kappa R_1}])$ satisfies \ref{ass1}-\ref{ass2}),
\begin{align*}
\abs{\Eci{y}{\widetilde{Z}_{s-r} (\overline{F}_{r/s})}} 
\leq C y \e^{-y} \e^{\kappa y/\sqrt{s}}
\leq C y \e^{-2y/3},
\end{align*}
using that $\kappa / \sqrt{s} \leq 1/3$.
Noting also that $q_r(x,y) \leq C xy/s^{3/2}$ because $r \geq s/2$, we get 
\begin{align*}
\int_{s/2}^s \diff r 
	\int_0^\infty \Eci{y}{\widetilde{Z}_{s-r} (\overline{F}_{r/s})}^2 
	\e^y q_r(x,y) \diff y 
& \leq C \int_0^\infty 
	\int_{s/2}^s y^2 \e^{-y/3} \frac{xy}{s^{3/2}} \diff r \diff y
\leq C \frac{x}{\sqrt{s}}.
\end{align*}
This concludes the proof of the first inequality in the lemma.

For the second inequality, we follow the same steps, except for the part $r \in [0, s/2]$. In this case, we replace \eqref{bba} by the use of the second bound of Lemma \ref{lem:first-moment-Z_s(F,Delta))}, which gives us $\abso{\E_y[\widetilde{Z}_{s-r} (F_{r/s})]} \leq Cy\e^{-y} \e^{\kappa y/\sqrt{s}}$.
It follows that
\[
\int_0^{s/2} \diff r 
	\int_0^\infty \Eci{y}{\widetilde{Z}_{s-r} (F_{r/s})}^2 
	\e^y q_r(x,y) \diff y
\leq C \int_0^\infty y^2 \e^{-y/2} \int_0^{s/2} q_r(x,y) \diff r \diff y \leq C.
\]
using that $\int_0^\infty q_r(x,y) \diff r = 2 (x\wedge y) \leq 2y$.
This replaces the bound \eqref{eq:first-part} and concludes the proof of the second inequality in the lemma.
\end{proof}

\begin{lem}
\label{lem:f_alpha}
Let $\alpha\in(0,2)$ and $\kappa \geq 0$. 
There exists a constant $C = C(\kappa,\alpha) >0$, such that
for any $x>0$, $s \geq 9 \kappa^2 \vee 2$ and any function $F \colon \R \to \R$ satisfying $\abs{F(y)} \leq y^{-\alpha} \e^{\kappa y}$ for all $y > 0$,
\[
\Eci{x}{\widetilde Z_s(F)^2} \le C \e^{-x} \times 
\begin{cases}
1 + \frac{x}{\sqrt s},& \text{if } \alpha < 1, \\
1 + \frac{x}{\sqrt s}\log s, & \text{if } \alpha=1, \\
1 + \frac{x}{\sqrt s}s^{\alpha-1}, & \text{if } \alpha \in (1,2).
\end{cases}
\]
\end{lem}
\begin{proof}
Since $x^{-\alpha} \e^{\kappa x} \leq x^{-\alpha} \e^{\kappa} + \e^{\kappa x}$, it is enough to prove the result when $F(x) = x^{-\alpha}$ and $F(x) = \e^{\kappa x}$.
The case of the function $F(x) = \e^{\kappa x}$ is covered by the second bound of Lemma \ref{lem:second-moment-Z_s(F,t,Delta)}, since $F/(\kappa\vee 1)$ satisfies Assumptions \ref{ass1}-\ref{ass2}.
So we now focus on the function $F(x) = x^{-\alpha}$.
We start as in the proof of Lemma \ref{lem:second-moment-Z_s(F,t,Delta)}:
\begin{align}
\begin{split}
\Eci{x}{\widetilde{Z}_s (F)^2}
& = K \e^{-x} \int_0^s \diff r 
	\int_0^\infty \Eci{y}{\widetilde{Z}_{s-r} (F_{r/s})}^2 
	\e^y q_r(x,y) \diff y \\
& \relphantom{=} {} 
+ \e^{-x} \int_0^\infty \e^{-y} 
	\left(y F \left( \frac{y}{\sqrt{s}} \right)\right)^2 
	q_s(x,y) \diff y.
\end{split} \label{md2}
\end{align}
Using the bound $q_s(x,y)\le Cxy/s^{3/2}$ and that $F(x) = x^{-\alpha}$, the second term on the right-hand side of \eqref{md2} is smaller than
\begin{equation} \label{eq:T2}
T_2 \le C \e^{-x}\frac{x}{\sqrt s} s^{\alpha-1} \int_0^\infty y^{1 + 2(1-\alpha)} \e^{-y} \diff y,
\end{equation}
and the last integral is finite since $\alpha < 2$.
Hence, we now focus on the first term on the right-hand side of \eqref{md2}.
We first note that by the many-to-one formula \eqref{eq:many-to-one-bessel}, we have
\begin{align}
\Eci{y}{\widetilde Z_{s-r}(F_{r/s})}
& = y\e^{-y} \Eci{y/\sqrt{s-r}}{F\left( \sqrt{\frac{s-r}{s}} R_1\right)}
= y\e^{-y} \left( \frac{s}{s-r} \right)^{\alpha/2} \Eci{y/\sqrt{s-r}}{(R_1)^{-\alpha} } 
\nonumber \\
& \leq C y\e^{-y} \left( \frac{s}{s-r} \right)^{\alpha/2} 
\left( 1 \wedge \frac{(s-r)^{\alpha/2}}{y^\alpha} \right), 
\label{T2a}
\end{align}
applying Lemma \ref{lem:control-E_y[R_1^-alpha exp(lambdaR_1)]}.
We now split the integral over $r$ in the definition of the first term on the right-hand side of \eqref{md2} into three parts, according to whether $r\in[0,s/2]$, $r\in[s/2,s-1]$ or $r\in [s-1,s]$. 

For the first part, note that \eqref{T2a} gives $\E_y[\widetilde Z_{s-r}(F_{r/s})] \le C y\e^{-y}$ for every $r \in [0,s/2]$.
It follows that 
\begin{equation}
\label{T2d}
\int_0^{s/2} \diff r 
	\int_0^\infty \Eci{y}{\widetilde Z_{s-r}(F_{r/s})}^2 
	\e^y q_r(x,y) \diff y 
	\le 
	C\int_0^{s/2} \diff r 
	\int_0^\infty y^2 \e^{-y} q_r(x,y) \diff y
	\leq C,
\end{equation}
by integrating first over $r$ and using the Green function identity $\int_0^\infty q_r(x,y)\diff r = 2(x\wedge y) \leq 2y$.
For the second part, we use $\E_y[\widetilde Z_{s-r}(F_{r/s})] \le C y\e^{-y} (\frac{s}{s-r})^{\alpha/2}$ and $q_r(x,y)\le Cxy/r^{3/2}$. Integrating over $y$, this gives,
\begin{align}
\int_{s/2}^{s-1} \diff r 
	\int_0^\infty \Eci{y}{\widetilde Z_{s-r}(F_{r/s})}^2 
	\e^y q_r(x,y) \diff y 
&\le Cx s^{-3/2+\alpha} \int_1^{s/2} u^{-\alpha} \diff u
	\nonumber \\
&\leq C \frac{x}{\sqrt s} \times 
	\begin{cases}
1,& \text{if } \alpha < 1,\\
\log s, & \text{if } \alpha=1,\\
s^{\alpha-1}, & \text{if } \alpha \in (1,2).
\label{T2e}
\end{cases}
\end{align}
Finally, for the third part, we use $\E_y[\widetilde Z_{s-r}(F_{r/s})] \le C y^{1-\alpha}\e^{-y} s^{\alpha/2}$ and again $q_r(x,y)\le Cxy/r^{3/2}$:
\begin{align}
\int_{s-1}^{s} \diff r 
	\int_0^\infty \Eci{y}{\widetilde Z_{s-r}(F_{r/s})}^2 
	\e^y q_r(x,y) \diff y 
\le 
	Cx s^{\alpha-3/2} \int_0^\infty y^{3-2\alpha} \e^{-y}\diff y
\leq C \frac{x}{\sqrt s} s^{\alpha-1},
\label{T2g}
\end{align}
using that $\alpha < 2$. Gathering \eqref{md2}, \eqref{eq:T2}, \eqref{T2d}, \eqref{T2e} and \eqref{T2g}, the statement follows.
\end{proof}

\subsection{Particles killed by a barrier at 0}
\label{subsection:number-of-killed-particles}

In this subsection, we study BBM starting with a single particle at $x>0$ and where particles are killed by hitting 0, and focus on the stopping line $\cL$ of the killed particles defined by
\begin{align*}
\cL
& \coloneqq 
\{u \in \T : \text{there exists } s \geq 0 \text{ s.t.\@ } 
u \in \cN(s), X_u(s) \leq 0 \text{ and } \forall r \in [0,s), X_u(r) > 0 \}.
\end{align*}
Note that $\cL$ is a shorthand for $\cL^{0,0}$ introduced in Section \ref{subsection:scheme_barrier}.
recall from there that, for $u \in \cL$, we define $T_u \coloneqq \inf \{s \geq 0 : u \in \cN(s) \text{ and } X_u(s) \leq 0 \}$ to be the killing time of $u$.
Lastly, for $I \subset \R_+$ an interval, let $\cL_I \coloneqq \{ u \in \cL : T_u \in I \}$ denote the subset of particles that are killed at 0 during the time interval $I$.
Firstly, the following explicit formula follows from Lemma 4.5 of \cite{maillardpain2019}: for any measurable function $\varphi \colon \R \to \R_+$ and any $x > 0$, we have
\begin{align} \label{eq:first-moment-on-the-stopping-line}
\Eci{x}{\sum_{u \in \cL} \varphi(T_u)}  
= x \e^{-x} \int_0^\infty \varphi(r) \frac{\e^{-x^2/2r}}{\sqrt{2\pi}r^{3/2}} \diff r.
\end{align}
Now, we prove two lemmas bounding the first and second moment of the number of particles killed during the time interval $[s,\infty)$.
\begin{lem} \label{lem:first-moment-number-of-killed-particles}
There exists $C>0$ such that, for any $s,x > 0$, we have
\begin{align*}
\Eci{x}{\# \cL_{[s,\infty)}}   
\leq C \e^{-x} \left(1 \wedge \frac{x}{\sqrt{s}} \right).
\end{align*}
\end{lem}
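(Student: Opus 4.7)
The plan is to apply the explicit identity \eqref{eq:first-moment-on-the-stopping-line} with $\varphi = \mathbf{1}_{[s,\infty)}$, which directly produces the formula
\[
\Eci{x}{\# \cL_{[s,\infty)}} = x \e^{-x} \int_s^\infty \frac{\e^{-x^2/2r}}{\sqrt{2\pi}\, r^{3/2}} \diff r.
\]
It then suffices to show that the factor $x \int_s^\infty \frac{\e^{-x^2/2r}}{\sqrt{2\pi}\, r^{3/2}} \diff r$ is bounded above by $C(1 \wedge x/\sqrt{s})$ for some universal constant $C$.

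I would obtain this through two elementary, complementary estimates. First, using the trivial bound $\e^{-x^2/2r} \le 1$ and integrating explicitly gives
\[
x \int_s^\infty \frac{\diff r}{\sqrt{2\pi}\, r^{3/2}} = \sqrt{\tfrac{2}{\pi}}\, \frac{x}{\sqrt{s}},
\]
which takes care of the regime $x/\sqrt{s} \le 1$. Second, I would recognize the function $r \mapsto \frac{x}{\sqrt{2\pi}\, r^{3/2}} \e^{-x^2/(2r)}$ as the density of the first passage time to the origin of a standard Brownian motion started at $x>0$; since this is a probability density, its integral over $[0,\infty)$ equals $1$, and in particular
\[
x \int_s^\infty \frac{\e^{-x^2/2r}}{\sqrt{2\pi}\, r^{3/2}} \diff r \le 1,
\]
which handles the regime $x/\sqrt{s} > 1$. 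Combining the two bounds yields the claim. There is no real obstacle here: everything follows from the explicit first moment formula already stated in \eqref{eq:first-moment-on-the-stopping-line}, the only input being the probabilistic interpretation of the integrand as a first hitting time density.
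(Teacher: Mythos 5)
Your proof is correct and follows the same explicit formula \eqref{eq:first-moment-on-the-stopping-line} and the same elementary integration as the paper for the $x/\sqrt{s}$ bound. The one small difference is how the bound by $1$ is obtained: the paper cites the identity $\Eci{x}{\#\cL}=\e^{-x}$ from Lemma~4.5 of \cite{maillardpain2018-1}, whereas you observe directly that $r\mapsto \frac{x}{\sqrt{2\pi}r^{3/2}}\e^{-x^2/2r}$ is the first-passage-time density of Brownian motion started at $x$ and so integrates to $1$. The two are mathematically equivalent (that lemma is itself established via the same hitting-time identity), but your version is a bit more self-contained since it extracts both bounds from the single integral formula without an external citation.
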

\begin{proof}
It has already been shown in Lemma 4.5 of \cite{maillardpain2019} that $\Eci{x}{\# \cL} = \e^{-x}$.
Moreover, 
\begin{align*}
\Eci{x}{\# \cL_{[s,\infty)}}   
& = x \e^{-x} \int_s^\infty \frac{\e^{-x^2/2r}}{\sqrt{2\pi}r^{3/2}} \diff r
\leq C x \e^{-x} \int_s^\infty \frac{1}{r^{3/2}} \diff r
\leq C \frac{x}{\sqrt{s}} \e^{-x}.
\end{align*}
Combining both results, it proves the result. 
\end{proof}
\begin{lem} \label{lem:second-moment-number-of-killed-particles}
There exists $C>0$ such that, for any $s,x > 0$, we have
\[
\Eci{x}{(\# \cL_{[s,\infty)})^2}   
\leq C \e^{-x} \left( 1 \wedge \frac{x}{\sqrt{s}} \right).
\]
\end{lem}
\begin{proof}
For this proof, we use the change of measure introduced in Section~4.3 of \cite{maillardpain2019}. 
We follow the proof of Lemma 4.8 in \cite{maillardpain2019}, but note that here, since we assumed that $\Ec{L^2} < \infty$, we do not need to use a truncation of the number of offspring of the spine. 
Firstly, note that we have $\E_x[(\# \cL_{[s,\infty)})^2] \leq C \e^{-x}$ directly from Lemma 4.8 in \cite{maillardpain2019}, so we can now focus on proving the other part of the bound.
Moreover, observe that
\begin{align*}
\Eci{x}{(\# \cL_{[s,\infty)})^2}
= \Eci{x}{\# \cL_{[s,\infty)} (\# \cL_{[s,\infty)}-1)}
+ \Eci{x}{\# \cL_{[s,\infty)}}
\end{align*}
and $\E_x[\# \cL_{[s,\infty)}] \leq C x \e^{-x} / \sqrt{s}$ by Lemma \ref{lem:first-moment-number-of-killed-particles}, so we only have to deal with the first summand.

For $x> 0$, let $\P_x^0$ be the law of the BBM started from a particle at $x$ with the additional rule that particles hitting the origin stop moving and branching (but they stay in the system).
Furthermore, let $\Q_x^0$ be the probability measure obtained by changing
the measure $\P_x^0$ w.r.t.\@ the mean 1 nonnegative martingale $(\e^{-x} W_t)_{t\geq0}$. Under $\Q_x^0$, the BBM can be represented using the so-called ``spine decomposition'' using a ``spine'' $(w_t)_{t\geq0}$ moving like standard Brownian motion (without drift) stopped at 0, branching at rate $\lambda \E[L]$ on $(0,\infty)$ and at rate 0 at 0, and giving birth according to the reproduction law given by the size-biased distribution of $L$.
See \cite{maillardpain2019} for details.

For a particle $u$, let $\tau_u \coloneqq \inf \{ r \geq 0 : X_u(r) = 0 \}$ with $\inf \emptyset = \infty$.
We have, for any $t \geq s$,
\begin{align*}
\Eci{x}{\# \cL_{[s,t)} (\# \cL_{[s,t)}-1)}
&= \E_x^0 \left[ \sum_{u\in\cN(t)} \1_{\tau_u \in [s,t)} (\# \cL_{[s,t)}-1) \right]
= \e^{-x} 
\E_{\Q_x^0} \left[ \1_{\tau_{w_t} \in [s,t)} (\# \cL_{[s,t)}-1) \right],
\end{align*}
applying \cite[Proposition 4.1]{maillardpain2019} in the second equality.
Then, denoting by $\Pi_t$ the set of branching times of the spine before $t$ and by $O_{w_r}$ the number of offspring of $w_r$ at a branching time $r$, and decomposing $(\# \cL_{[s,t)}-1)$ along the spine, we get 
\begin{align*}
\Eci{x}{\# \cL_{[s,t)} (\# \cL_{[s,t)}-1)}
& = \e^{-x} \E_{\Q_x^0} \left[ \1_{\tau_{w_t} \in [s,t)}
\sum_{r \in \Pi_{\tau_{w_t}}}
	\left( O_{w_r} - 1 \right) \Eci{X_{w_r}(r)}{\# \cL_{[(s-r)\vee 0,t-r]}}
\right] \\
&\leq C \e^{-x} \E_{\Q_x^0} \left[ 
\1_{\tau_{w_t} \in [s,t)}
\sum_{r \in \Pi_{\tau_{w_t}}} \e^{-X_{w_r}(r)} \left(1 \wedge \frac{X_{w_r}(r)}{\sqrt{(s-r)\vee 0}} \right)
\right],
\end{align*}
using Lemma \ref{lem:first-moment-number-of-killed-particles} and the fact that, conditioned on the trajectory and the branching times of the spine, $O_{w_r}$ follows the size-biased distribution of $L$ and therefore has a finite first moment thanks to the standing assumption $\E[L^2] < \infty$.
Denoting by $(B_r)_{r \geq 0}$ a Brownian motion started at $x$ under
$\P_x$ and setting $\tau \coloneqq \inf \{ r \geq 0 : B_r = 0 \}$, it follows from the spinal decomposition description that
\begin{align}
\Eci{x}{\# \cL_{[s,t)} (\# \cL_{[s,t)}-1)}
& \leq C \e^{-x} \E_x \left[ \1_{\tau \in [s,t)} 
\int_0^\tau \e^{-B_r} \left(1 \wedge \frac{B_r}{\sqrt{(s-r)\vee 0}} \right) \diff r
\right] \nonumber \\
& \leq C \e^{-x} \E_x \left[ \1_{\tau \geq s}
\int_0^{s/2} \e^{-B_r} \frac{B_r}{\sqrt{s}} \diff r
\right]
+ C \e^{-x} \E_x \left[ \1_{\tau \geq s} 
\int_{s/2}^\tau \e^{-B_r} \diff r
\right] \nonumber \\
& \leq \frac{C\e^{-x}}{\sqrt{s}} \E_x \left[ 
\int_0^\tau B_r \e^{-B_r} \diff r
\right]
+ C \e^{-x} \int_{s/2}^\infty \Eci{x}{\e^{-B_r} \1_{\tau \geq r}} \diff r. \label{me}
\end{align}
Now, recall that $q_r(x,\cdot)$ is the density of a Brownian motion at time $r$ starting from $x$ and killed at 0. Then, on the one hand, using the Green's function identity $\int_0^\infty q_r(x,y) \diff r = 2 (x \wedge y) \leq 2x$, we get 
\begin{equation} \label{eq:integral-killed-BM}
\Eci{x}{\int_0^\tau B_r \e^{-B_r} \diff r} 
= \int_0^\infty \int_0^\infty y\e^{-y} q_r(x,y) \diff y \diff r
\leq \int_0^\infty 2xy\e^{-y} \diff y 
= 2 x. 
\end{equation}
On the other hand, using that $q_r (x,y) \leq \sqrt{2/\pi} r^{-3/2} xy$, it follows that
\begin{equation} \label{eq:exponential-killed-BM}
\int_{s/2}^\infty \Eci{x}{\e^{-B_r} \1_{\tau \geq r}} \diff r
= \int_{s/2}^\infty \int_0^\infty \e^{-y} q_r(x,y) \diff y \diff r
\leq C \frac{x}{\sqrt{s}}.
\end{equation}
Combining these two bounds with \eqref{me} concludes the proof.
\end{proof}

\subsection{Particles killed by a barrier at level \texorpdfstring{$\gamma$}{gamma} after time \texorpdfstring{$t_0$}{t0}}
\label{subsection:number-of-killed-particles-level-gamma}

In this subsection, we state some straightforward consequences of the results in Section \ref{subsection:number-of-killed-particles} in the more general case of a barrier killing particles below level $\gamma$ on the time interval $[t_0,\infty)$, for some arbitrary $\gamma \in \R$ and $t_0 > 0$.
We use notation from Section \ref{subsection:scheme_barrier}. 
Moreover, we define, for any interval $I \subset [t_0,\infty)$,
\[
\cL^{t_0,\gamma}_I 
\coloneqq \{ u \in \cL^{t_0,\gamma} : T_u \in I \}
\]
the subset of $\cL^{t_0,\gamma}$ consisting of particles killed during the time interval $I$.
Note that $\cL^{t_0,\gamma}_{\{t_0\}}$ plays a special role, because particles killed at time $t_0$ can be strictly below $\gamma$.
We are not considering these particles in this subsection.
Firstly, it follows from the branching property at time $t_0$ and \eqref{eq:first-moment-on-the-stopping-line} that, for any measurable $\varphi \colon (t_0,\infty) \to \R_+$, 
\begin{align} 
\E \Biggl[ \sum_{u \in \cL^{t_0,\gamma}_{(t_0,\infty)}} \varphi(T_u)
\Bigg| \sF_{t_0} \Biggr]
& = \sum_{v \in \cN(t_0)} 
	(X_v(t_0) - \gamma)_+ \e^{-(X_v(t_0) - \gamma)} 
	\int_{t_0}^\infty \varphi(s) 
	\frac{\e^{-(X_v(t_0)-\gamma)^2/2(s-t_0)}}{\sqrt{2\pi}(s-t_0)^{3/2}} 
	\diff s
	\nonumber \\
& \leq \e^{\gamma} Z_{t_0}(1,\gamma)
	\int_{t_0}^\infty \frac{\varphi(s)}{(s-t_0)^{3/2}} \diff s.
	\label{eq:first-moment-on-the-stopping-line-gamma}
\end{align}
Secondly, it follows from the branching property at time $t_0$ and Lemma \ref{lem:first-moment-number-of-killed-particles} that
\begin{equation} \label{eq:number-of-killed-particles-given-sF_s^a}
\Ecsq{\# \cL_{(t_0,\infty)}^{t_0,\gamma}}{\sF_{t_0}}   
\leq C \e^{\gamma} W_{t_0}.
\end{equation}

\section{A first control on the rate of convergence: proof of Proposition~\ref{prop:rate-of-CV-Z_t(F,Delta,t)}}
\label{section:a-first-rough-approach-to-the-result}

The goal of this section is to prove Proposition \ref{prop:rate-of-CV-Z_t(F,Delta,t)}, which gives a first bound on the rate of convergence of $Z_t (F,\Delta)$ towards $\rho(F) Z_\infty$, when $\Delta$ is at most of order $\log t$.
To this aim, we use the strategy presented in Section \ref{subsection:scheme_barrier}, by introducing a barrier at level $\gamma$ from time $t_0 = t^a$, for some $a \in (0,1)$.
We eventually choose $\gamma = \frac{1}{6} \log t$ and $a = 2/3$ in Section~\ref{subsection:rate_of_convergence}, but we keep some generality for $\gamma$ and $a$ throughout the section to show where this choice comes from.

We first prove a concentration result for 
$Z_t(F,\gamma) - \rho(F) Z_t(1,\gamma)$, and only afterwards prove that we can replace $Z_t(1,\gamma)$ by $Z_\infty$ and $\gamma$ by any other shift $\Delta$.
With the notation of Section~\ref{subsection:scheme_barrier}, we have the following decomposition
\begin{align} \label{eq:decompo-Z_s(F,t,Delta_s)}
Z_t(F,\gamma) - \rho(F) Z_t(1,\gamma)
& = \widetilde{Z}_t^{t^a,\gamma} (F-\rho(F),\gamma) 
+ \sum_{u \in \cL^{t^a,\gamma}} \overline{\Omega}^{(u)}_t,
\end{align}
where $\overline{\Omega}^{(u)}_t$ is the contribution of the progeny of the killed particle $u$ to $Z_t(F-\rho(F),\gamma)$ and is defined by
\begin{align} \label{eq:def_Omega_barre}
\overline{\Omega}^{(u)}_t \coloneqq 
\sum_{v \in \cN(t) \text{ s.t.\@ } u \leq v} 
(X_v(t)-\gamma)_+ \e^{-X_v(t)} 
	\left( F \left( \frac{X_v(t)-\gamma}{\sqrt{t}} \right)
	- \rho(F) \right)
	\quad \text{if } T_u \leq t,
\end{align}
and $\overline{\Omega}^{(u)}_t \coloneqq 0$ if $T_u>t$.
Recall from \eqref{eq:def_F_h} that $F_h(x) \coloneqq F(\sqrt{1-h}\cdot x)$ for $h \in [0,1)$ and $x \in \R$.
Note that conditionally on $\sF_{\cL^{t^a,\gamma}}$, the $\overline{\Omega}^{(u)}_t$ for $u \in \cL^{t^a,\gamma}$ with $T_u \leq t$ are independent with respectively the same law as $Z_{t-T_u} (F_{T_u/t}-\rho(F),\gamma)$ under $\P_{X_u(T_u)}$.

\subsection{The branching Brownian motion with barrier between times \texorpdfstring{$t^a$}{ta} and \texorpdfstring{$t$}{t}}
\label{subsection:BBM-with-barrier-s^alpha-to-s}

In this section, we deal with the first part in decomposition \eqref{eq:decompo-Z_s(F,t,Delta_s)}: we prove $\widetilde{Z}_t^{t^a,\gamma} (F-\rho(F),\gamma)$ is small, by controlling its first moment and its variance conditionally on $\sF_{t^a}$.
\begin{lem} \label{lem:first-second-moment-Z_s^(gamma_s)(F,Delta_s)}
Let $\kappa \geq 0$ and $a\in(0,1)$.
There exist $C = C(\kappa) >0$ and $t_0 = t_0(\kappa,a) > 0$, such that
for any function $F \colon \R \to \R$ satisfying Assumptions \ref{ass1}-\ref{ass2}, $\gamma \in \R$ and $t \geq t_0$,
\begin{align*}
\abs{ \Ecsq{\widetilde{Z}_t^{t^a,\gamma} (F-\rho(F),\gamma)}{\sF_{t^a}} }
& \leq \frac{C}{t^{1-a}} Z_{t^a} (x \mapsto x^2\e^{\kappa x}+1,\gamma), \\
\Varsq{\widetilde{Z}_t^{t^a,\gamma} (F-\rho(F),\gamma)}{\sF_{t^a}} 
& \leq \frac{C \e^{-\gamma}}{\sqrt{t}} 
	Z_{t^a} \left( x \mapsto 1 + x^{-1},\gamma \right).
\end{align*}
\end{lem}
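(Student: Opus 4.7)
The strategy is to apply the branching property at time $t^a$ to reduce both estimates to sums over particles $u \in \cN(t^a)$ with $X_u(t^a) > \gamma$, each of which can then be controlled by the first- and second-moment bounds of Subsection~\ref{subsection:BBM_killed-at-0}. Indeed, shifting positions by $-\gamma$, the contribution of such a $u$ to $\widetilde{Z}_t^{t^a,\gamma}(F-\E[F(R_1)],\gamma)$ is, conditionally on $\sF_{t^a}$, an independent copy of $e^{-\gamma}\widetilde{Z}_{t-t^a}(F-\E[F(R_1)],0,t)$ under $\P_{X_u(t^a)-\gamma}$: the shift turns the barrier at $\gamma$ into a barrier at $0$, while $e^{-\gamma}$ factors out of the exponential weight. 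Particles with $X_u(t^a) \le \gamma$ contribute zero.

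For the first moment, I would apply the first bound of Lemma~\ref{lem:first-moment-Z_s(F,Delta))} with $r = t - t^a$, $\Delta = 0$, and $x = X_u(t^a) - \gamma$ to each term. Choosing $t$ large enough so that $t - t^a \ge t/2$, the additive error $\tfrac{t^a}{\sqrt{t(t-t^a)}}$ is of order $t^{a-1} = 1/t^{1-a}$, which produces the target decay directly since $e^{\kappa x/\sqrt{t^a}} \ge 1$. The polynomial-exponential term $\tfrac{x^2}{t-t^a}\bigl(1+\tfrac{x^4}{(t-t^a)^2}\bigr) e^{\kappa x/\sqrt t}$ must be compared with $\tfrac{C}{t^{1-a}} e^{\kappa x/\sqrt{t^a}}$. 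Setting $c_t := \kappa/\sqrt{t^a} - \kappa/\sqrt t > 0$, a short calculation gives $\sup_{x\ge 0} x^2(1 + x^4/t^2) e^{-c_t x} \le C t^a$, with the maximum occurring near $x \sim 1/c_t \sim t^{a/2}$; this is precisely where the assumption $a < 1$ enters, ensuring that the $x^4/t^2$ factor does not dominate at the maximiser. Multiplying by $C x e^{-x}$ from the lemma and by the $e^{-\gamma}$ from the branching step, then summing over $u$, the factors $e^{-\gamma} e^{-(X_u(t^a)-\gamma)}$ recombine to $e^{-X_u(t^a)}$ and reconstitute $\tfrac{C}{t^{1-a}} Z_{t^a}(x\mapsto e^{\kappa x}, \gamma, t^a)$.

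For the variance, the independence of the contributions of distinct particles given $\sF_{t^a}$ yields additivity, and I would bound each individual variance by $e^{-2\gamma}$ times the second moment provided by Lemma~\ref{lem:second-moment-Z_s(F,t,Delta)}, applied at $x = X_u(t^a) - \gamma$ and $r = t - t^a$. The dominant $x/\sqrt{t-t^a}$ term sums to $\tfrac{Ce^{-\gamma}}{\sqrt t} Z_{t^a}(1,\gamma,t^a)$, matching the ``$1$'' in $1+x^{-1}$. The remaining errors $1/(t-t^a)^2$ and $(t^a)^2/(t(t-t^a))$ produce summands proportional to $e^{-X_u(t^a)}$; since $Z_{t^a}(x\mapsto 1/x, \gamma, t^a) \ge \sqrt{t^a} \sum_u \1_{X_u(t^a)>\gamma}\, e^{-X_u(t^a)}$, and since both $1/t^2$ and $t^{2a-2}$ are of order $O(t^{(a-1)/2}) = O(\sqrt{t^a}/\sqrt t)$ for $a < 1$ and $t$ large, these are absorbed into $\tfrac{Ce^{-\gamma}}{\sqrt t} Z_{t^a}(x\mapsto 1/x, \gamma, t^a)$. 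Adding the two contributions yields the stated bound with $x\mapsto 1+x^{-1}$.

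The only delicate step is the polynomial-versus-exponential trade-off in the first-moment estimate, where the smallness of $c_t \sim t^{-a/2}$ has to be carefully balanced against the polynomial factors of degree up to six; the remainder is essentially collecting error terms while repeatedly using $a < 1$ and the fact that $t-t^a \ge t/2$ for large $t$.
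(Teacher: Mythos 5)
Your proof is correct and follows essentially the same route as the paper's: branching at time $t^a$ to express the conditional first and second moments as sums of independent contributions from particles above $\gamma$, then invoking Lemma~\ref{lem:first-moment-Z_s(F,Delta))} (first bound, with $r=t-t^a$, $\Delta=0$) and Lemma~\ref{lem:second-moment-Z_s(F,t,Delta)} respectively. The paper compresses the polynomial-versus-exponential trade-off into the single displayed estimate $|\E_y[\widetilde Z_{t-t^a}(\overline F,0)]| \le Cy\e^{-y}\frac{t^a}{t}\e^{\kappa y/\sqrt{t^a}}$ and the analogous $\frac{C}{\sqrt t}y\e^{-y}(\sqrt{t^a}/y + 1)$ for the second moment, whereas you spell out the supremum calculation with $c_t = \kappa/\sqrt{t^a} - \kappa/\sqrt t$ and where $a<1$ enters; this is an accurate unpacking of the same step, not a different argument.
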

\begin{proof} For brevity, we write $\overline{F} \coloneqq F-\rho(F)$.
Let start with the first moment.
Using the branching property at time $t^a$, we get 
\[
\abs{ \Ecsq{\widetilde{Z}_t^{t^a,\gamma} (\overline{F},\gamma)}{\sF_{t^a}} }
\leq \sum_{v \in \cN(t^a)} 
	\e^{-\gamma} \1_{X_v(t^a) > \gamma} 
	\abs{\Eci{X_v(t^a)-\gamma}{\widetilde{Z}_{t-t^a}(\overline{F}_{t^a/t}) }}.
\]
Then, applying the first bound of Lemma \ref{lem:first-moment-Z_s(F,Delta))} and noting that $t-t^a \geq t/2$ for $t$ large enough, we have, for any $y>0$,
\[
\abs{\Eci{y}{\widetilde{Z}_{t-t^a}(\overline{F}_{t^a/t}) }} 
\leq C y \e^{-y} 
	\left(
	\frac{y^2}{t} 
	\e^{\kappa y / \sqrt{t}}
	+ \frac{t^a}{t}
	\right)
\leq C y \e^{-y} \frac{t^a}{t} \left(\frac{y^2}{t^a}\e^{\kappa y / \sqrt{t^a}}+1\right),
\]
for $t$ large enough. The first inequality follows.

We can now deal with the second moment.
Using the branching property at time $t^a$, we get
\begin{align} \label{eq:conditional_variance_1}
\Varsq{\widetilde{Z}_t^{t^a,\gamma} (\overline{F},\gamma)}{\sF_{t^a}}
\leq \sum_{v \in \cN(t^a)} 
	\e^{-2\gamma} \1_{X_v(t^a) > \gamma} 
	\Eci{X_v(t^a) - \gamma}{\widetilde{Z}_{t-t^a} (\overline{F}_{t^a/t})^2}.
\end{align}
For any $y >0$ and $h \in [0,1)$, we bound
\begin{align*}
\Eci{y}{\widetilde{Z}_{t-t^a} (\overline{F}_h)^2}
\leq 2 \Eci{y}{\widetilde{Z}_{t-t^a} (F_h - \rho(F_h))^2}
+ 2 (\rho(F_h) - \rho(F))^2 \Eci{y}{\widetilde{Z}_{t-t^a}(1)^2}.
\end{align*}
Then, since $F_h$ satisfies Assumptions \ref{ass1}-\ref{ass2}, we can apply the first bound of Lemma \ref{lem:second-moment-Z_s(F,t,Delta)} to the first term on the right-hand side of the last equation.
For the second term, we use that $\abso{\rho(F_h) - \rho(F)} \leq Ch$ by \eqref{eq:R F with F_h} and \eqref{eq:RF_bound} and we apply the second bound of Lemma~\ref{lem:second-moment-Z_s(F,t,Delta)} to $\E_y[\widetilde{Z}_{t-t^a}(1)^2]$.
This yields
\begin{align} \label{eq:conditional_variance_2}
\Eci{y}{\widetilde{Z}_{t-t^a} (\overline{F}_{t^a/t})^2}
\leq C \e^{-y} \frac{y}{\sqrt{t}}
+ C \left( \frac{t^a}{t} \right)^2 \e^{-y} \left( 1	+ \frac{y}{\sqrt{t}} \right) 
\leq \frac{C}{\sqrt{t}} y \e^{-y} 
	\left( 1 + \frac{\sqrt{t^a}}{y} \right).
\end{align}
Collecting the previous estimates, we finally get
\begin{align*}
	\Pp{\abs{Z_t(F,\Delta) - \rho(F) Z_\infty} \geq \delta}
	& \leq 5t^{-K} + \frac{C(\log t)^2}{\delta t^{1/3}},
\end{align*}
and applying this bound with $K+1$ instead of $K$ yields the result.
\end{proof}
In the following lemma, we combine the first and second moments calculation to get a concentration result on $\widetilde{Z}_t^{t^a,\gamma} (F-\rho(F),\gamma)$ around 0.
\begin{lem} \label{lem:control-rate-of-CV-killed-Z_s^(s,gamma_s)(F,Delta_s)}
Let $\kappa \geq 0$, $K>0$ and $a\in(0,1)$.
There exist $C = C(\kappa,K) >0$ and $t_0 = t_0(\kappa,K,a) >0$, such that
for any function $F \colon \R \to \R$ satisfying Assumptions \ref{ass1}-\ref{ass2}, $t \geq t_0$, $\abs{\gamma} \leq K \log t$, $M \in [1,K \log t]$ and $\delta > 0$, 
\begin{align*}
\Pp{\abs{\widetilde{Z}_t^{t^a,\gamma} (F-\rho(F),\gamma)} \geq \delta}
& \leq \e^{-M} 
+ \frac{C M}{\delta t^{1-a}} 
+ \frac{C \e^{-\gamma}}{\delta^2 \sqrt{t}} M.
\end{align*}
\end{lem}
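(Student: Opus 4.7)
The plan is to condition on $\sF_{t^a}$ and combine Markov's inequality on the conditional mean with Chebyshev's inequality on the conditional variance. Writing $X \coloneqq \widetilde{Z}_t^{t^a,\gamma}(F - \E[F(R_1)], \gamma)$, I would start from
\[
\Pp{|X| \geq \delta}
\leq \Pp{|\Ecsq{X}{\sF_{t^a}}| \geq \delta/2}
+ \Pp{|X - \Ecsq{X}{\sF_{t^a}}| \geq \delta/2},
\]
apply Markov to the first term and conditional Chebyshev to the second, and insert the bounds on $|\Ecsq{X}{\sF_{t^a}}|$ and $\Varsq{X}{\sF_{t^a}}$ supplied by Lemma \ref{lem:first-second-moment-Z_s^(gamma_s)(F,Delta_s)}. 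This reduces matters to controlling
\[
\frac{C}{\delta\, t^{1-a}}\, \Ec{Z_{t^a}(x \mapsto \e^{\kappa x}, \gamma, t^a)}
\quad \text{and} \quad
\frac{C \e^{-\gamma}}{\delta^2 \sqrt{t}}\, \Ec{Z_{t^a}(x \mapsto 1 + x^{-1}, \gamma, t^a)}.
\]

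The difficulty is that neither of these expectations is bounded uniformly, since a rare deep excursion of the BBM minimum up to time $t^a$ can inflate the random sums $Z_{t^a}(\cdot, \gamma, t^a)$. To handle this, I would restrict everything to the $\sF_{t^a}$-measurable event
\[
A_M \coloneqq \Bigl\{\forall q \in [0, t^a],\ \min_{u \in \cN(q)} X_u(q) > -M\Bigr\},
\]
whose complement has probability at most $\e^{-M}$ by \eqref{eq:global-min-of-the-BBM}; this contributes exactly the first term $\e^{-M}$ in the claimed bound, while the remaining analysis is carried out on $A_M$.

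The key input on $A_M$ is estimate \eqref{eq:bound-Z_s(F,gamma,s)-with-barrier-at--M} from Remark \ref{rem:tightness}, which ensures that $\Ec{|Z_r(G, \gamma, r)|\, \1_{A_M}} \leq CM$ whenever $|G(x)| \leq \kappa(1+x^{-1})\e^{\kappa x}$ and $\gamma + M \leq \sqrt{r}$. Both weights $G(x) = \e^{\kappa x}$ and $G(x) = 1 + x^{-1}$ satisfy this pointwise domination, and the constraint $\gamma + M \leq \sqrt{t^a}$ holds for $t$ large enough since $|\gamma|, M \leq K \log t$. Plugging the resulting $CM$ bounds into the Markov and Chebyshev estimates above produces exactly the terms $CM/(\delta\, t^{1-a})$ and $CM\e^{-\gamma}/(\delta^2 \sqrt{t})$, matching the statement.

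Main obstacle: there is no serious obstacle once Lemma \ref{lem:first-second-moment-Z_s^(gamma_s)(F,Delta_s)} and Remark \ref{rem:tightness} are in hand; the argument is essentially a careful assembly of these pieces via conditional Markov/Chebyshev. The only point requiring attention is that the various ``for $t$ large enough'' hypotheses in the cited lemma and remark are simultaneously satisfiable, which follows at once from the uniform bounds $|\gamma|, M \leq K \log t$ together with $a \in (0,1)$ being fixed.
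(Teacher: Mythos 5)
Your proposal is correct and follows essentially the same route as the paper: the proof of Lemma \ref{lem:control-rate-of-CV-killed-Z_s^(s,gamma_s)(F,Delta_s)} also introduces the barrier event $A = \{\forall r \in [0,t^a], \min_{u\in\cN(r)} X_u(r) > -M\}$, pays $\e^{-M}$ for its complement via \eqref{eq:global-min-of-the-BBM}, applies Markov and conditional Chebyshev to the conditional mean and variance from Lemma \ref{lem:first-second-moment-Z_s^(gamma_s)(F,Delta_s)}, and controls the resulting $\sF_{t^a}$-measurable quantities on $A$ using \eqref{eq:bound-Z_s(F,gamma,s)-with-barrier-at--M}. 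The only cosmetic difference is that the paper restricts to $A$ from the outset when splitting the probability, whereas you state the Markov--Chebyshev decomposition first and then retrofit the restriction to $A_M$; the substance and all the estimates are identical.
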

\begin{proof}
For brevity, we write $\overline{F} \coloneqq F-\rho(F)$.
We introduce a barrier at $-M$ between times $0$ and $t^a$ by considering the event $A \coloneqq \{\forall r \in [0,t^a], \min_{u \in \cN(r)} X_u(r) > -M \}$.
Using \eqref{eq:global-min-of-the-BBM}, we have $\P(A^c) \leq \e^{-M}$ and, therefore,
\begin{align}
\begin{split}
\Pp{\abs{\widetilde{Z}_t^{t^a,\gamma} (\overline{F},\gamma)} \geq \delta} 
& \leq \e^{-M}
	+ \Pp{A,
	\abs{ \Ecsq{\widetilde{Z}_t^{t^a,\gamma} (\overline{F},\gamma)}{\sF_{t^a}} }
		\geq \frac{\delta}{2}} \\
& \relphantom{\leq} {}
	+ \Pp{ A,
	\abs{\widetilde{Z}_t^{t^a,\gamma} (\overline{F},\gamma) 
	- \Ecsq{\widetilde{Z}_t^{t^a,\gamma} (\overline{F},\gamma)}{\sF_{t^a}}} 
	\geq \frac{\delta}{2} }.
\end{split}	\label{iz}
\end{align}
Then, using Markov's inequality and the first part of Lemma \ref{lem:first-second-moment-Z_s^(gamma_s)(F,Delta_s)}, the second term in the right-hand side of \eqref{iz} is smaller than
\begin{align} \label{iy}
\frac{2}{\delta} 
\Ec{\1_A \abs{\Ecsq{\widetilde{Z}_t^{t^a,\gamma} (\overline{F},\gamma)}{\sF_{t^a}}}}
& \leq 
\frac{C}{\delta t^{1-a}} 
\Ec{\1_A Z_{t^a} (x \mapsto x^2e^{\kappa x}+1,\gamma)}
\leq \frac{C M}{\delta t^{1-a}},
\end{align}
where the second inequality follows from \eqref{eq:bound-Z_s(F,gamma,s)-with-barrier-at--M} for $t$ large enough such that $M\leq \sqrt{t}$.
On the other hand, using Chebyshev's inequality and the second part of Lemma \ref{lem:first-second-moment-Z_s^(gamma_s)(F,Delta_s)}, the third term in the right-hand side of \eqref{iz} is smaller than
\begin{align} \label{iw}
\frac{4}{\delta^2}
\Ec{\1_A \Varsq{\widetilde{Z}_t^{t^a,\gamma} (\overline{F},\gamma)}{\sF_{t^a}}} 
\leq \frac{C \e^{-\gamma}}{\delta^2 \sqrt{t}}
	\Ec{\1_A Z_{t^a}\left( x \mapsto 1 + x^{-1},\gamma \right)}
\leq \frac{C \e^{-\gamma}}{\delta^2 \sqrt{t}} M,
\end{align}
using again \eqref{eq:bound-Z_s(F,gamma,s)-with-barrier-at--M}. 
The result follows from \eqref{iz}, \eqref{iy} and \eqref{iw}.
\end{proof}

\subsection{Contributions of killed particles}

In this subsection, we deal with the contributions of the particles killed by the barrier at level $\gamma$ between times $t^a$ and $t$.
This relies on the following lemma, proved by a crude triangle inequality and first moment bound on the contributions $\overline{\Omega}^{(u)}_t$ defined in \eqref{eq:def_Omega_barre}.
\begin{lem} \label{lem:control-fluctuations-Z_s(F,t,Delta_s)}
Let $\kappa \geq 0$, $K>0$ and $a\in(0,1)$.
There exist $C = C(\kappa,K) >0$ and $t_0 = t_0(\kappa,K,a) >0$, such that
for any function $F \colon \R \to \R$ satisfying Assumptions \ref{ass1}-\ref{ass2}, $t \geq t_0$, $\gamma \in [0,K \log t]$ and $\delta > 0$, 
\begin{align*}
\P \Biggl( \sum_{u \in \cL^{t^a,\gamma}} \abs{\overline{\Omega}^{(u)}_t} \geq \delta \Biggr) 
& \leq t^{-K} + \frac{C}{\delta} \frac{(\log t)^2}{t^{a/2}}.
\end{align*}
\end{lem}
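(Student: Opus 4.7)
The plan is to establish the bound via a two-step reduction: first introduce a good event with probability $\geq 1 - e^{-M}$, then apply a first moment bound combined with Markov's inequality. I would set $A \coloneqq \{\forall r \in [0,t^a], \min_{u\in\cN(r)} X_u(r) > -M\}$, which by \eqref{eq:global-min-of-the-BBM} satisfies $\Pp{A^c} \leq \e^{-M}$, and aim to show $\Ec{\1_A \sum_u \abs{\Omega^{(u)}_t}} \leq C(\log t)^2/t^{a/2}$.

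The key device, as announced in Section~\ref{section:strategy}, is the \emph{additional barrier at 0} for descendants of killed particles. For each $u \in \cL^{t^a,\gamma}$, let $\widetilde\Omega^{(u)}_t$ denote the version of $\Omega^{(u)}_t$ in which only descendants $v \geq u$ satisfying $\forall q \in [T_u,t],\, X_v(q) > 0$ are counted. Conditionally on $\sF_{\cL^{t^a,\gamma}}$, the $\widetilde\Omega^{(u)}_t$ are independent, with $\widetilde\Omega^{(u)}_t$ distributed as $\widetilde Z_{t-T_u}(F-\Ec{F(R_1)},\gamma)$ under $\P_{X_u(T_u)}$ (and vanishing when $X_u(T_u) \leq 0$, since the 0-barrier kills the sub-BBM immediately). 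Applying the second bound of Lemma~\ref{lem:first-moment-Z_s(F,Delta))} with $x = X_u(T_u)$, $\Delta = \gamma$, $r = t-T_u$ yields $\Ecsq{\abs{\widetilde\Omega^{(u)}_t}}{\sF_{\cL^{t^a,\gamma}}} \leq C(X_u(T_u)+\gamma)\e^{-X_u(T_u)}$, which for our regime simplifies to $C\gamma \e^{-X_u(T_u)}$.

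Summing over $u$ and using Lemma~\ref{lem:number-of-killed-particles-given-sF_s^a} (taking $s_1 = t^a$) to control $\#\cL^{t^a,\gamma}_{(t^a,t]}$, plus the trivial bound $\sum_{u \in \cL^{t^a,\gamma}_{\{t^a\}}} \e^{-X_u(t^a)}\1_{X_u(t^a)>0} \leq W_{t^a}(X>0)$, I would obtain $\Ecsq{\sum_u \abs{\widetilde\Omega^{(u)}_t}}{\sF_{t^a}} \leq C\gamma\, W_{t^a}(X>0)$. The crucial step is then to bound $\Ec{\1_A W_{t^a}(X>0)}$: applying Remark~\ref{rem:tightness} to $F(y) = 1/y$ (which satisfies $\abs{F(y)} \leq 1 + y^{-1}$, hence the required growth condition), the $\sqrt{t^a}$-scaled version of $Z_{t^a}(1/y,0,t^a)$ equals $\sqrt{t^a}\, W_{t^a}(X>0)$, and \eqref{eq:bound-Z_s(F,gamma,s)-with-barrier-at--M} yields $\Ec{\1_A W_{t^a}(X>0)} \leq CM/\sqrt{t^a}$. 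Combining these, $\Ec{\1_A \sum_u \abs{\widetilde\Omega^{(u)}_t}} \leq CM\gamma/t^{a/2} \leq C(\log t)^2/t^{a/2}$, and Markov's inequality gives the stated bound for $\sum\abs{\widetilde\Omega^{(u)}_t}$.

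The main technical difficulty—and the place I expect the proof to require care—is handling the discrepancy $\sum_u \abs{\Omega^{(u)}_t - \widetilde\Omega^{(u)}_t}$, which counts descendants that do cross below 0. By \eqref{eq:global-min-of-the-BBM} applied to each sub-BBM from $X_u(T_u) > 0$, the event that some descendant of some killed particle reaches 0 has probability at most $\sum_u \e^{-X_u(T_u)}$ conditionally, and the preceding expectation estimate shows this event has $\P \leq CM/\sqrt{t^a}$ on $A$. For the regime of $\delta$ and $M$ relevant in the applications of Lemma~\ref{lem:control-fluctuations-Z_s(F,t,Delta_s)}, this residual probability is dominated either by $\e^{-M}$ or by $C(\log t)^2/(\delta t^{a/2})$ and can be absorbed; the case of particles in $\cL^{t^a,\gamma}_{\{t^a\}}$ with $X_u(t^a) \leq 0$ (where $\widetilde\Omega^{(u)}_t = 0$) may require refining the good event to include $\{\min_v X_v(t^a) > 0\}$, whose complement has probability $\leq C(\log t)^2/t^{3a/2}$ by \eqref{eq:local-min-of-the-BBM}, again small enough to absorb. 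Putting everything together through the decomposition $\Pp{\sum_u\abs{\Omega^{(u)}_t} \geq \delta} \leq \Pp{A^c} + \Pp{\text{descendants cross 0},A} + \Pp{\1_A \sum \abs{\widetilde\Omega^{(u)}_t} \geq \delta}$ yields the claim.
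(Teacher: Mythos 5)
Your proposal follows the paper's outline (good event of probability $\geq 1-\e^{-M}$, conditional first-moment bound, Markov), and much of the first-moment computation lines up with the paper's; but the way you impose positivity diverges from the paper's device and introduces a gap. The paper takes the barrier event $A_t \coloneqq \{\forall r\in[0,t],\,\min_{u\in\cN(r)}X_u(r)>-M\}$, i.e.\ \emph{over the whole interval $[0,t]$}, not just $[0,t^a]$. On $A_t$ one shifts the entire picture by $+M$: each sub-BBM emanating from a killed particle $u$ is then a BBM started at $X_u(T_u)+M>0$ and automatically stays above $0$, so $\abs{\Omega^{(u)}_t}$ is \emph{exactly} stochastically dominated by $\e^{M}\widetilde{Z}_{t-T_u}(\overline F,\gamma+M)$ under $\P_{X_u(T_u)+M}$. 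The barrier at $0$ is not imposed by fiat; it comes for free from the event and the shift, so there is no discrepancy to subtract.

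Because you restrict the barrier event to $[0,t^a]$ and then truncate $\Omega^{(u)}_t$ to $\widetilde\Omega^{(u)}_t$ by hand, you must control the discrepancy, and here the argument genuinely fails. You bound $\Pp{\text{some descendant of a killed particle hits }0,\,A}$ by $\Ec{\1_A\sum_u \e^{-X_u(T_u)}} \leq CM/\sqrt{t^a}$, but this is an absolute probability with \emph{no} $1/\delta$ factor, whereas the lemma's claimed bound is $\e^{-M}+C(\log t)^2/(\delta t^{a/2})$ for \emph{every} $\delta>0$. As soon as $\delta \gg (\log t)^2/M$ (while $M\leq K\log t$, so $\e^{-M}$ cannot absorb a term of order $M/\sqrt{t^a}$ either), your residual dominates the stated bound and the lemma is not proved. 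The caveat "for the regime of $\delta$ and $M$ relevant in the applications" does not prove the statement, which is quantified over all $\delta>0$ and whose $1/\delta$ dependence is precisely what makes it a clean first-moment estimate. To repair your version you would need a first-moment bound on $\sum_u\bigl(\abs{\Omega^{(u)}_t}-\abs{\widetilde\Omega^{(u)}_t}\bigr)$ rather than a probability bound on the discrepancy event — but that is essentially equivalent to working on $A_t$ with the shift by $M$, at which point the artificial $0$-barrier is superfluous.
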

\begin{proof} 
For brevity, we write $\overline{F} \coloneqq F-\rho(F)$ (and $\overline{F}_h = (\overline{F})_h$).
Let $M \coloneqq K \log t$.
We introduce a barrier at $-M$: we set $A_s \coloneqq \{\forall r \in [0,s], \min_{u \in \cN(r)} X_u(r) > - M \}$ for any $s \geq 0$.
We will bound the first moment of the quantity of interest on the event $A_t$.
Conditionally on $\sF_{\cL^{t^a,\gamma}}$, for each $u \in \cL^{t^a,\gamma}$, 
the random variable $\1_{A_t}\bigl\lvert \overline{\Omega}^{(u)}_t \bigr\rvert$ is stochastically dominated by $\e^M \widetilde{Z}_{t-T_u} (\lvert \overline{F}_{T_u/t} \rvert,\gamma+M)$ under $\P_{X_u(T_u)+M}$. 
Thus, by linearity of expectation and using that the event $A_{t^a}$ is $\sF_{\cL^{t^a,\gamma}}$-measurable and contains $A_t$, we get
\begin{align*}
\E \Biggl[ \1_{A_t}
	\sum_{u \in \cL^{t^a,\gamma}} \abs{\overline{\Omega}^{(u)}_t}
	\,\Bigg|\, \sF_{\cL^{t^a,\gamma}} \Biggr]
& \leq \1_{A_{t^a}} 
	\sum_{u \in \cL^{t^a,\gamma}} 
	\e^M \Eci{X_u(T_u)+M}{\widetilde{Z}_{t-T_u} (\lvert \overline{F}_{T_u/t} \rvert,\gamma+M)} \\
& \leq C \1_{A_{t^a}} 
	\sum_{u \in \cL^{t^a,\gamma}} 
	(X_u(T_u)+M) \e^{-X_u(T_u)} 
	\e^{\kappa (X_u(T_u)+M)/\sqrt{t}},
\end{align*}
using the second bound of Lemma \ref{lem:first-moment-Z_s(F,Delta))}.
Note that for $u \in \cL^{t^a,\gamma}$, $X_u(T_u) \leq \gamma$ so, for $t$ large enough, $\e^{\kappa (X_u(T_u)+M)/\sqrt{t}} \leq \e^{\kappa}$.
Therefore, distinguishing between particles killed at time $t^a$ or later (for which $X_u(T_u) = \gamma$), we get
\begin{align}
\E \Biggl[ \1_{A_t}	\sum_{u \in \cL^{t^a,\gamma}} \abs{\overline{\Omega}^{(u)}_t} \Biggr]
& \leq C (\gamma+M) \cdot \E \Biggl[ \1_{A_{t^a}}  \Biggl(
	\sum_{u \in \cL^{t^a,\gamma}_{\{t^a\}}} 
	\e^{-X_u(t^a)}
	+ \e^{-\gamma} \# \cL^{t^a,\gamma}_{(t^a,t]}
	\Biggr) \Biggr], \label{fff}
\end{align}
On the one hand, we have
\[
	\E \Biggl[ \1_{A_{t^a}} \sum_{u \in \cL^{t^a,\gamma}_{\{t^a\}}} \e^{-X_u(t^a)} \Biggr]
	\leq \Ec{ \sum_{u \in \cN(t^a)} 
	 \e^{-X_u(t^a)} \1_{X_u(t^a) \leq \gamma} \1_{\forall s \in [0,t^a], X_u(s) > - M}}
	\leq \frac{CM(M+\gamma)^2}{t^{3a/2}},
\]
by proceeding as in \eqref{eq:Fbad2}.
On the other hand, applying \eqref{eq:number-of-killed-particles-given-sF_s^a}, we get
\[
	\e^{-\gamma} \Ec{ \1_{A_{t^a}} \# \cL^{t^a,\gamma}_{(t^a,t]} }
	= \e^{-\gamma} \Ec{ \1_{A_{t^a}} \Ecsq{\# \cL^{t^a,\gamma}_{(t^a,t]}}{\sF_{t^a}} }
	\leq C \Ec{ \1_{A_{t^a}} W_{t^a} }
	\leq \frac{CM}{\sqrt{t^a}},
\]
where the last inequality follows from \eqref{eq:bound-Z_s(F,gamma,s)-with-barrier-at--M}.
Therefore the right-hand side of \eqref{fff} is smaller than $C(\log t)^2/\sqrt{t^a}$.
Combining this with $\P(A^c_t) \leq \e^{-M} = t^{-K}$ by \eqref{eq:global-min-of-the-BBM} and Markov's inequality, this concludes the proof.
\end{proof}

\subsection{Rate of convergence of \texorpdfstring{$Z_t(F,\Delta)$}{Zt(F,Delta)}}
\label{subsection:rate_of_convergence}
\begin{proof}[Proof of Proposition \ref{prop:rate-of-CV-Z_t(F,Delta,t)}]
Let $a \in (0,1)$, $K>0$ $\gamma \in [0,K \log t]$.
Using \eqref{eq:decompo-Z_s(F,t,Delta_s)}, Lemma \ref{lem:control-rate-of-CV-killed-Z_s^(s,gamma_s)(F,Delta_s)} (with $M = K \log t$) and Lemma \ref{lem:control-fluctuations-Z_s(F,t,Delta_s)}, we get
\[
\Pp{\abs{Z_t(F,\gamma) - \rho(F) Z_t(1,\gamma)} \geq \delta} 
\leq t^{-K} + 
C (\log t)^2
\left( 
\frac{t^{a-1}}{\delta}
+ \frac{\e^{-\gamma}}{\delta^2 \sqrt{t}}
+ \frac{t^{-a/2}}{\delta}
\right).
\]
The optimal choice of $a$ is $a=2/3$ which we set for the remainder of the proof.
We also choose\footnote{We remark that we could have chosen $\gamma = b\log t$ for any $b\ge 1/6$ as well.} $\gamma = \frac{1}{6} \log t$ and therefore assume that $K\ge 1/6$, which we can do without loss of generality, as the statement of the proposition becomes stronger upon increasing $K$. 
Then the previous bound becomes 
$t^{-K} + C (\log t)^2 (t^{-1/3} \delta^{-1} + (t^{-1/3} \delta^{-1})^2)$. Furthermore, noting that a probability is always bounded by 1 and that $\min((t^{-1/3} \delta^{-1})^2,1) \le t^{-1/3} \delta^{-1}$, we have
\[
\Pp{\abs{Z_t(F,\gamma) - \rho(F) Z_t(1,\gamma)} \geq \delta} 
\leq t^{-K} + C (\log t)^2 t^{-1/3} \delta^{-1}.
\]
We now compare $Z_t(F,\gamma)$ and $Z_t(F,\Delta)$.
It follows from Assumptions~\ref{ass1} and \ref{ass2} (see \eqref{eq:bound_Lipschitz}) that
\begin{align*}
\abs{Z_t(F,\gamma) - Z_t(F,\Delta)} 
& \leq C \frac{\abs{\Delta-\gamma}}{\sqrt{t}} 
\sum_{u \in \cN(t)} (X_u(t)_+ +1) \e^{-X_u(t)} 
	\e^{\kappa X_u(t)/\sqrt{t}} \\
& \leq C \frac{\log t}{\sqrt{t}} Z_t(x \mapsto (1+x^{-1})\e^{\kappa x}).
\end{align*}
Then, using \eqref{eq:bound-Z_s(F,gamma,s)-tightness} with $M = K \log t$, we get
\begin{align} \label{ix} 
\Pp{\abs{Z_t(F,\gamma) - Z_t(F,\Delta)} \geq \delta} 
\leq t^{-K} + C (\log t)^2 t^{-1/2} \delta^{-1}.
\end{align}
Finally, we have to bound $\abs{Z_t(1,\gamma) - Z_\infty}$.
Applying \eqref{ix} to $F=1$ and $\Delta=0$, we can bound $\abs{Z_t(1,\gamma) - Z_t(1,0)}$
and recall from Proposition~\ref{prop:control-rate-of-cv-Z} that we have a bound for $\abs{Z_t - Z_\infty}$.
Now, note that $Z_t(1,0)$ corresponds to $Z_t$ without the particles below 0 at time $t$. We bound the difference using a first moment on the event $A \coloneqq \{\forall s \in [0,t], \min_{u \in \cN(s)} X_u(s) > - M \}$ with $M = K \log t$: using the many-to-one formula \eqref{eq:many-to-one-bessel}, we get 
\begin{align*}
\Ec{ \abs{Z_t(1,0) - Z_t} \1_A }
\leq \Ec{\sum_{u\in\cN(t)} M \e^{-X_u(t)} \1_{X_u(t) \leq 0}
	\1_{\forall s \in [0,t], X_u(s) > -M} }
\leq C \frac{M^4}{t^{3/2}},
\end{align*}
by proceeding as in \eqref{eq:Fbad2}.
Recalling from \eqref{eq:global-min-of-the-BBM} that $\P(A^c) \leq \e^{-M} = t^{-K}$, we get the bound
$\Pp{\abs{Z_t(1,0) - Z_t} \geq \delta} 
\leq t^{-K} + C (\log t)^4 t^{-3/2} \delta^{-1}$.
The result follows.
\end{proof}
The following corollary of Proposition \ref{prop:rate-of-CV-Z_t(F,Delta,t)} will also be useful.
\begin{cor} \label{cor:rate_of_convergence_as_expectation}
Let $\kappa \geq 0$ and $K > 0$. 
There exist $C = C(\kappa,K) >0$ and $t_0 = t_0(\kappa,K) > 0$, such that
for any function $F \colon \R \to \R$ satisfying Assumptions \ref{ass1}-\ref{ass2} and any $t \geq t_0$, $\Delta \in [0,K \log t]$ and $\ep \geq t^{-K}$, 
\[
\E\left[ \left( \ep \abs{ Z_t(F,\Delta) - \rho(F)Z_\infty} \right) \wedge 1 \right] 
\le C (\log t)^3 t^{-1/3} \ep.
\]
\end{cor}
\begin{proof} The expectation on the left-hand side equals 
$\int_0^1 \P(\abs{ Z_t(F,\Delta) - \rho(F)Z_\infty} \geq x/\ep) \diff x$.
For $x \leq t^{-1/3}\ep$, we bound the probability by 1.
Otherwise, we bound it using Proposition \ref{prop:rate-of-CV-Z_t(F,Delta,t)}.
Integrating over $x$, the result follows.
\end{proof}

\section{A first limit result: proof of Proposition~\ref{prop:convergence-with-translation-gamma_t}}
\label{section:using-previous-results}

In this section, we prove Proposition \ref{prop:convergence-with-translation-gamma_t}, a limit theorem for the fluctuations of $Z_t(F,\gamma) - \rho(F) Z_t(1,\gamma)$.
For this we use the scheme presented in Section \ref{subsection:scheme_barrier} with $t_0 = t^a$ for some $a \in (1/3,1/2)$ and $\gamma = \frac{1}{2} \log t + \beta_t$ with $\beta_t \to \infty$ and $\beta_t = o(\log t)$ as $t \to \infty$.
More precisely, we use the same decomposition of $Z_t(F,\gamma) - \rho(F) Z_t(1,\gamma)$ as in the previous section, see \eqref{eq:decompo-Z_s(F,t,Delta_s)} and \eqref{eq:def_Omega_barre}.
It follows from Lemma \ref{lem:control-rate-of-CV-killed-Z_s^(s,gamma_s)(F,Delta_s)} that when particles killed by the barrier are not taken into account, there are no fluctuations at scale $1/\sqrt{t}$ (see the proof of Proposition \ref{prop:convergence-with-translation-gamma_t} below).
Therefore, the main task in this section is to prove the convergence in distribution for the contribution of killed particles, which is stated in the following lemma and proved in Section \ref{subsection:proof-of-proposition-convergence-of-fluctuations}. 
Recall that $\overline{\Omega}^{(u)}_t$ denotes the contribution of the progeny of the killed particle $u$ in $Z_t(F-\rho(F),\gamma)$, see \eqref{eq:def_Omega_barre}.
\begin{lem} \label{lem:convergence-of-fluctuations}
	Let $\gamma = \frac{1}{2} \log t + \beta_t$, where $\beta_t \to \infty$ and $\beta_t = o(\log t)$ as $t \to \infty$.
	Let $\kappa \geq 0$.
	For any function $F \colon \R \to \R$ satisfying \ref{ass1}-\ref{ass2} and any $a \in (1/3,1)$, we have
	\begin{align*}
		\sqrt{t}
		\Biggl(
		\sum_{u \in \cL^{t^a,\gamma}} \overline{\Omega}^{(u)}_t
		- \frac{\beta_t}{\sqrt{t}} \int_0^1 \mathscr R F(r) \frac{Z_\infty}{\sqrt{2\pi}} \frac{\diff r}{r^{3/2}}
		\Biggr)
		\xrightarrow[t\to\infty]{\text{(law)}} \int_0^1 \mathscr R F(r) M_{Z_\infty}(\diff r).
	\end{align*}
\end{lem}
With this result in hand, we can prove Proposition \ref{prop:convergence-with-translation-gamma_t}.
\begin{proof}[Proof of Proposition \ref{prop:convergence-with-translation-gamma_t}] Fix $a\in(1/3,1/2)$ arbitrarily.
Combining decomposition \eqref{eq:decompo-Z_s(F,t,Delta_s)} with Lemma \ref{lem:convergence-of-fluctuations} (which uses $a>1/3$), it is enough to prove 
\[
	\sqrt{t} \widetilde{Z}_t^{t^a,\gamma} (F-\rho(F),\gamma) 
	\xrightarrow[t\to\infty]{(\P)} 0.
\]
But,using now $a<1/2$, this convergence follows from Lemma \ref{lem:control-rate-of-CV-killed-Z_s^(s,gamma_s)(F,Delta_s)} applied to $\delta = \varepsilon/\sqrt{t}$ and $M = \beta_t$, for any fixed $\ep > 0$.
\end{proof}

\subsection{Convergence of sums along a stopping line starting at time \texorpdfstring{$t^a$}{ta}}
\label{subsection:sums_along_stopping_line}

In this section, we prove preliminary results that will be useful in the proof of Lemma \ref{lem:convergence-of-fluctuations}.
The first one shows the convergence of sums along the stopping line $\cL^{t^a,\gamma}$ of a function of the hitting time, with a rate of convergence at least $1/\beta_t$.
Recall $\cL^{t^a,\gamma}_I$ denotes the set of particles killed during the time interval $I$.
\begin{lem} \label{lem:convergence-of-fluctuations-part-3-bis}
Let $\gamma = \frac{1}{2} \log t + \beta_t$, where $\beta_t \to \infty$ and $\beta_t = o(\log t)$ as $t \to \infty$.
Let $\Upsilon \colon [0,1] \to \C$ be a measurable function such that $\abs{\Upsilon(r)} \leq C r^{3/4}$, $r\in[0,1]$.
Then, for any $a \in (0,1)$, we have
\[
\beta_t \Biggl(
\e^{-\beta_t} \sum_{u \in \cL^{t^a,\gamma}_{(t^a,t]}} 
	\Upsilon(T_u/t)
- \int_0^1 \Upsilon(r) \frac{Z_\infty}{\sqrt{2\pi}} \frac{\diff r}{r^{3/2}}
\Biggr)
\xrightarrow[t\to\infty]{(\P)} 0.
\]
\end{lem}
\begin{proof}
By linearity of sum and integral, we can and will suppose that $\abs{\Upsilon(r)} \leq r^{3/4}$, $r\in[0,1]$.
The idea is to use a first and second moment calculation given $\sF_{t^a}$ to prove the sum converges to the integral faster than $1/\beta_t$.
But we apply this strategy only to particles killed after time $t_1 \coloneqq t \beta_t^{-6}$: we first consider
\begin{align*}
\Xi_t
\coloneqq 
\e^{-\beta_t} \sum_{u \in \cL^{t^a,\gamma}_{[t_1,t]}} 	
	\Upsilon(T_u/t)
- Z_{t^a}(1,\gamma) \int_{t_1/t}^1 \Upsilon(r) \frac{\diff r}{\sqrt{2\pi} r^{3/2}}.
\end{align*}
and show $\beta_t \Xi_t \to 0$ in probability by the means of a first and second moment calculation.
We start with the first moment: by the first part of \eqref{eq:first-moment-on-the-stopping-line-gamma}, we have
\begin{align*}
\E \Biggl[
\sum_{u \in \cL^{t^a,\gamma}_{[t_1,t]}}
\Upsilon(T_u/t)
\Bigg| \sF_{t^a} \Biggr]
& = \sum_{v \in \cN(t^a)}
	(X_v(t^a) - \gamma)_+ \e^{-(X_v(t^a) - \gamma)}  
	\int_{t_1}^t
	\frac{\e^{-(X_v(t^a) - \gamma)^2/2(s-t^a)}}{\sqrt{2\pi}(s-t^a)^{3/2}} 
	\Upsilon(s/t)
	\diff s.
\end{align*}
Changing variable with $r = s/t$ and using $\e^\gamma = \e^{\beta_t} \sqrt{t}$, we get
\begin{align*}
\abs{\Ecsq{\Xi_t}{\sF_{t^a}}} 
& \leq 
\sum_{v \in \cN(t^a)}
	(X_v(t^a) - \gamma)_+ \e^{-X_v(t^a)} 
	\int_{t_1/t}^1
	\abs{
	\frac{\e^{-(X_v(t^a) - \gamma)^2/2(rt-t^a)}}{(1-t^{a-1}/r)^{3/2}}
	- 1 } 
	\frac{\abs{\Upsilon(r)}}{\sqrt{2\pi}r^{3/2}}
	\diff r. 
\end{align*}
Now, for $y > 0$ and $r \in [t_1/t,1]$, we have 
\begin{align*}
\abs{\frac{\e^{-y^2/2(rt-t^a)}}{(1-t^{a-1}/r)^{3/2}} - 1}  
& \leq \e^{-y^2/2(rt-t^a)} \abs{\frac{1}{(1-t^{a-1}/r)^{3/2}}- 1} 
+ \abs{\e^{-y^2/2(rt-t^a)} - 1} \\
& \leq \abs{\frac{1}{(1-t^a/t_1)^{3/2}} - 1} 
+ \frac{y^2}{2(rt-t^a)}
\leq 2 \left( \frac{t^a}{t_1} + \frac{y^2}{t_1} \right)
= 2 \beta_t^6 \frac{t^a}{t} \left( 1 + \frac{y^2}{t^a} \right),
\end{align*}
for $t$ large enough, recalling that $t_1 = t \beta_t^{-6}$,
and this yields
\begin{align}
\abs{\Ecsq{\Xi_t}{\sF_{t^a}}} 
& \leq \sum_{v \in \cN(t^a)}
	(X_v(t^a) - \gamma)_+ \e^{-X_v(t^a)} 
	\cdot 2 \beta_t^6 \frac{t^a}{t}
	\left( 1 + \frac{(X_v(t^a) - \gamma)^2}{t^a} \right)
	\int_0^1 
	\frac{\abs{\Upsilon(r)}}{r^{3/2}}
	\diff r \nonumber \\
& \leq C \beta_t^6 t^{a-1}
Z_{t^a}( x \mapsto 1+x^2,\gamma), \label{pk}
\end{align}
because the integral is finite by assumption on $\Upsilon$.
We now deal with the second moment: by the branching property at time $t^a$ and since $\abs{\Upsilon} \leq 1$,
\begin{align*}
\Var \Biggl(
\sum_{u \in \cL^{t^a,\gamma}_{[t_1,t]}} \e^{-\beta_t}
	\Upsilon(T_u/t)
\Bigg| \sF_{t^a} \Biggr) 
\leq \e^{-2\beta_t}
\sum_{v \in \cN(t^a)}
	\1_{X_v(t^a) > \gamma}
	\Eci{X_v(t^a) - \gamma}{(\# \cL_{[t_1-t^a,t-t^a]})^2},
\end{align*}
using the notation of Section~\ref{subsection:number-of-killed-particles}.
Applying Lemma \ref{lem:second-moment-number-of-killed-particles}, it proves
\begin{align}
\Varsq{\Xi_t}{\sF_{t^a}}
& \leq C \e^{-2\beta_t}
\e^{\gamma} \frac{Z_{t^a}(1,\gamma)}{\sqrt{t_1}} 
= C \e^{-\beta_t} \beta_t^3
Z_{t^a}(1,\gamma). \label{pj}
\end{align}
For $\varepsilon >0$, we can now apply Markov's and Chebyshev's inequalities given $\sF_{t^a}$ to get
\begin{align*}
\Ppsq{\abs{\Xi_t} \geq \frac{\varepsilon}{\beta_t}}{\sF_{t^a}}
& \leq \frac{2 \beta_t}{\varepsilon} \abs{\Ecsq{\Xi_t}{\sF_{t^a}}}
+ \left( \frac{2\beta_t}{\varepsilon} \right)^2 \Varsq{\Xi_t}{\sF_{t^a}} \\
& \leq \left( \frac{C}{\varepsilon} \beta_t^7 t^{a-1}
+ \frac{C}{\varepsilon^2} \beta_t^5 \e^{-\beta_t}
\right)
Z_{t^a} (x \mapsto 1+x^2,\gamma),
\end{align*}
applying \eqref{pk} and \eqref{pj}.
But, it follows from Lemma~\ref{lem:tightness} that $(Z_{t^a} (x \mapsto 1+x^2,\gamma))_{t\geq1}$ is tight. Therefore, using that $\beta_t\to\infty$ and $\beta_t = o(\log t)$, this proves that
$\beta_t \Xi_t \to 0$ in probability
and concludes the first step.

The second step consists of dealing with the remaining terms, which are
\begin{align*}
\Xi_t^{(1)}
& \coloneqq 
\e^{-\beta_t} \sum_{u \in \cL^{t^a,\gamma}_{(t^a,t_1)}} 
\Upsilon(T_u/t), \\
\Xi_t^{(2)}
& \coloneqq 
Z_{t^a}(1,\gamma)
\int_0^{t_1/t} \Upsilon(r) \frac{\diff r}{\sqrt{2\pi} r^{3/2}}, \\
\Xi_t^{(3)}
& \coloneqq 
\left( Z_\infty-Z_{t^a}(1,\gamma) \right)
\int_0^1 \Upsilon(r) \frac{\diff r}{\sqrt{2\pi} r^{3/2}},
\end{align*}
so that the convergence of Lemma \ref{lem:convergence-of-fluctuations-part-3-bis} can be written $\beta_t(\Xi_t + \Xi_t^{(1)} - \Xi_t^{(2)} -\Xi_t^{(3)}) \to 0$ in probability.
For $\Xi_t^{(3)}$, we have $\beta_t (Z_{t^a}(1,\gamma)-Z_\infty) \to 0$ in probability by Proposition \ref{prop:rate-of-CV-Z_t(F,Delta,t)} and so $\beta_t\Xi_t^{(3)} \to 0$ in probability.
We now deal with $\Xi_t^{(2)}$: using $\abs{\Upsilon(r)} \leq r^{3/4}$, we have
\begin{align}
\abs{\int_0^{t_1/t} \Upsilon(r) \frac{\diff r}{r^{3/2}}}
& \leq \int_0^{t_1/t} \frac{\diff r}{r^{3/4}}
\leq C \left( \frac{t_1}{t} \right)^{1/4}
= C \beta_t^{-3/2}. \label{pn}
\end{align}
Hence, $\beta_t \lvert \Xi_t^{(2)} \rvert 
\leq C \beta_t^{-1/2} Z_{t^a}(1,\gamma) \to 0$,
because $Z_{t^a}(1,\gamma) \to Z_\infty$.
Finally, we deal with $\Xi_t^{(1)}$, by bounding its first moment given $\sF_{t^a}$.
For particles $u$ with $T_u \in (t^a,2t^a]$, we use $\abs{\Upsilon(T_u/t)} \leq C t^{3(a-1)/4}$ and \eqref{eq:number-of-killed-particles-given-sF_s^a} to bound the number of such particles.
For particles $u$ with $T_u \in (2t^a,t_1]$, we apply \eqref{eq:first-moment-on-the-stopping-line-gamma}.
This yields
\begin{align*}
\Ecsq{\bigl\lvert \Xi_t^{(1)} \bigr\rvert}{\sF_{t^a}} 
& \leq C \e^{-\beta_t} 
	\left( 
	\e^{\gamma} W_{t^a} t^{3(a-1)/4}
	+ \e^{\gamma} Z_{t^a}(1,\gamma) 
	\int_{2t^a}^{t_1} \frac{\abs{\Upsilon(s/t)}}{s^{3/2}} \diff s 
	\right) \\
& \leq C Z_{t^a}(1+x^{-1},\gamma) 
	\left( 
	t^{(a-1)/4}
	+ \int_0^{t_1/t} \frac{\abs{\Upsilon(r)}}{r^{3/2}} \diff r
	\right) \\
& \leq C Z_{t^a}(1+x^{-1},\gamma) \beta_t^{-3/2},
\end{align*}
applying again \eqref{pn}. It follows that $\beta_t \Xi_t^{(1)} \to 0$ in probability (using Lemma~\ref{lem:tightness}).
Therefore, we showed $\beta_t \Xi_t^{(i)} \to 0$ in probability for each $i \in \{1,2,3\}$ and this concludes the proof.
\end{proof}
The second result of this section is a consequence of the first one. For any $u \in \cL^{t^a,\gamma}$, we define $Z_\infty^{(u)}$ as the limit of the derivative martingale of the BBM rooted at $u$ at time $T_u$, that is
\begin{align} \label{eq:def_Z_infty^u}
Z_\infty^{(u)} \coloneqq 
\lim_{s\to\infty}
\sum_{v \in \cN(s) \text{ s.t.\@ } u \leq v} 
(X_v(s)-X_u(T_u)) \e^{-(X_v(s)-X_u(T_u))}.
\end{align}
By the branching property, given $\sF_{\cL^{t^a,\gamma}}$, the $Z_\infty^{(u)}$ for $u \in \cL^{t^a,\gamma}$ are independent copies of $Z_\infty$.
The following result shows the convergence in distribution of sums of rescaled versions of the $Z_\infty^{(u)}$ along $\cL^{t^a,\gamma}$, after an appropriate centering.
\begin{cor} \label{cor:convergence-of-fluctuations-part-2}
Let $\gamma = \frac{1}{2} \log t + \beta_t$, where $\beta_t \to \infty$ and $\beta_t = o(\log t)$ as $t \to \infty$.
Let $g \colon [0,1] \to \R$ be a measurable function such that $\abs{g(r)} \leq Cr$, $r\in[0,1]$.
For any $a \in (0,1)$, we have 
\begin{align} \label{eq:convergence-of-fluctuations-part-2}
\sum_{u \in \cL^{t^a,\gamma}_{(t^a,t]}} \e^{-\beta_t}
	g(T_u/t) \left( Z_\infty^{(u)} - \beta_t \right)
\xrightarrow[t\to\infty]{\text{(law)}} \int_0^1 g(r) M_{Z_\infty}(\diff r).
\end{align}
\end{cor}
\begin{proof}
	By linearity of sum and stable integral w.r.t. $g$, we can and will suppose that $\abs{g(r)} \leq r$, $r\in[0,1]$.
For brevity, we denote by $\Gamma_t$ the sum on the left-hand side of \eqref{eq:convergence-of-fluctuations-part-2}.
Recalling the definition of $M_{Z_\infty}$ from \eqref{eq:def_M_Z_infty}, it is sufficient to prove that
\begin{align} \label{pr}
\Ecsq{\e^{i \Gamma_t}}{\sF_{\cL^{t^a,\gamma}}}
\xrightarrow[t\to\infty]{(\P)} 
\exp \left( - \int_0^\infty 
\left[ \abs{g(r)} + i \frac{2}{\pi} g(r) \left( \log \abs{g(r)} - \mu_Z \right) \right] 
\frac{\sqrt{\pi}}{2\sqrt{2}} \frac{Z_\infty}{r^{3/2}} \diff r \right).
\end{align}
Indeed, since we can replace $g$ by $\xi g$ for any $\xi \in \R$, this implies the convergence of the characteristic function of $\Gamma_t$.
By the branching property, we have
\begin{align}
\Ecsq{\e^{i \Gamma_t}}{\sF_{\cL^{t^a,\gamma}}}
= \prod_{u \in \cL^{t^a,\gamma}_{(t^a,t]}} 
	\Psi_{Z_\infty} \left( \e^{-\beta_t} g(T_u/t) \right) 
	\exp \left(- i \beta_t\e^{-\beta_t} g(T_u/t) \right), \label{pa}
\end{align}
where $\Psi_{Z_\infty}$ denotes the characteristic function of $Z_\infty$.
It has been shown in \cite[Equation (1.12)]{maillardpain2019} (see also Footnote~\ref{foot:Zinfty}) that there exists a continuous function $\psi \colon \R \to \C$, with $\psi(0) = 0$, such that for any $\xi \in \R$,
\begin{align} \label{eq:characteristic-function-Z_infty}
\Psi_{Z_\infty}(\xi)
= \exp \left( 
- \frac{\pi}{2} \abs{\xi} - i \xi (\log \abs{\xi} - \mu_Z) + \xi \psi(\xi)
\right).
\end{align}
Using this expansion of $\Psi_{Z_\infty}$ around 0, we get, for any $r \in [0,1]$,
\begin{align*}
\Psi_{Z_\infty} \left( \e^{-\beta_t} g(r) \right) 
	\exp \left(- i \beta_t \e^{-\beta_t} g(r) \right) 
& = \exp \left( \e^{-\beta_t} G (r)
+ \e^{-\beta_t} g(r) 
	\psi \left( \e^{-\beta_t} g(r) \right) 
\right),
\end{align*}
with $$G (r) \coloneqq - \frac{\pi}{2} \abs{g(r)} - i g(r) (\log \abs{g(r)} - \mu_Z).$$
Therefore, \eqref{pa} becomes
\begin{align*}
\Ecsq{\e^{i \Gamma_t}}{\sF_{\cL^{t^a,\gamma}}}
= \exp \Biggl( \e^{-\beta_t}
\sum_{u \in \cL^{t^a,\gamma}_{(t^a,t]}}
\left[
G (T_u/t)
+ g(T_u/t) \psi \left( \e^{-\beta_t} g(T_u/t) \right)
\right]
\Biggr).
\end{align*}
Then, using that, for any $z_1,z_2 \in \{ z \in \C : \mathrm{Re}(z) \leq 0 \}$, $\lvert \e^{z_1} - \e^{z_2} \rvert \leq \abs{z_1 - z_2}$, we get
\begin{align}
\Bigg\lvert
\Ecsq{\e^{i \Gamma_t}}
	{\sF_{\cL^{t^a,\gamma}}}
- \exp \Biggl( \e^{-\beta_t}
	\sum_{u \in \cL^{t^a,\gamma}_{(t^a,t]}}
	G(T_u/t)
	\Biggr)
\Bigg\rvert 
& \leq \e^{-\beta_t} 
\sum_{u \in \cL^{t^a,\gamma}_{(t^a,t]}}
\abs{g(T_u/t) 
\psi \left( \e^{-\beta_t} g(T_u/t) \right)} \nonumber \\
& \leq \varepsilon(t) \e^{-\beta_t} 
\sum_{u \in \cL^{t^a,\gamma}_{(t^a,t]}}
\abs{g(T_u/t)}, \label{pc}
\end{align}
with $\varepsilon(t) \to 0$ as $t \to \infty$, using that $\abs{g(T_u/t)} \leq 1$ and $\psi(\xi) \to 0$ as $\xi \to 0$.
Then it follows from Lemma \ref{lem:convergence-of-fluctuations-part-3-bis} applied to $\Upsilon = \abs{g}$ that the right-hand side of \eqref{pc} tends to 0 in probability.
On the other hand, since $\abs{g(r)} \leq r$, we have $\abs{G(r)} \leq Cr^{3/4}$.
Therefore, we can apply Lemma \ref{lem:convergence-of-fluctuations-part-3-bis} again but to the function $\Upsilon = G$ to get
\begin{align} \label{px}
\exp \Biggl( \e^{-\beta_t}
	\sum_{u \in \cL^{t^a,\gamma}_{(t^a,t]}}
	G(T_u/t)
	\Biggr)
\xrightarrow[t\to\infty]{(\P)} 
\exp \left( \int_0^1 G(r) \frac{Z_\infty}{\sqrt{2\pi}}\frac{\diff r}{r^{3/2}} \right).
\end{align}
The right-hand side of \eqref{px} coincides with the right-hand side of \eqref{pr} so this concludes the proof.
\end{proof}

\subsection{Convergence of the contributions of the killed particles}
\label{subsection:proof-of-proposition-convergence-of-fluctuations}

In this section, we prove Lemma \ref{lem:convergence-of-fluctuations}, that is the convergence in law of the sum of contributions~$\overline{\Omega}^{(u)}_t$ of the particles killed by the barrier.

\begin{proof}[Proof of Lemma \ref{lem:convergence-of-fluctuations}]
First note that, with high probability, no particles are killed exactly at time $t^a$: as a consequence of \eqref{eq:local-min-of-the-BBM}, we have 
\begin{align} \label{eq:killed_at_t^a}
\P(\cL^{t^a,\gamma}_{\{t^a\}} \neq \emptyset ) 
= \Pp{\min_{u \in \cN(t^a)} X_u(t^a) \leq \gamma}
\leq C (\log t)^2 \e^{\gamma} t^{-3a/2} 
\xrightarrow[t \to \infty]{} 0,
\end{align}
since $a > 1/3$.
Moreover, setting $t_2 = t - t \e^{-2\beta_t}$, with high probability, no particles are killed between times $t_2$ and $t$: indeed, it follows from \eqref{eq:first-moment-on-the-stopping-line-gamma} that for $t$ large enough,
\begin{align} \label{eq:killed_between_t_2_and_t}
\Ecsq{\# \cL^{t^a,\gamma}_{(t_2,t]}}{\sF_{t^a}}
& \leq C \e^{\gamma} Z_{t^a}(1,\gamma) \int_{t_2}^{t} \frac{\diff s}{(s-t^a)^{3/2}} \nonumber \\
&\leq C \e^{\gamma} Z_{t^a}(1,\gamma) \frac{t-t_2}{t^{3/2}}
= C Z_{t^a}(1,\gamma)  \e^{-\beta_t}
\xrightarrow[t\to\infty]{(\P)} 0,
\end{align}
since $(Z_{t^a}(1,\gamma))_{t\geq 1}$ is tight by Lemma \ref{lem:tightness}.
Then, recall from \eqref{eq:def R F} that $\mathscr R F(r) = \rho(F_r) \1_{r<1} -\rho(F)$ with $F_r(x) = F(\sqrt{1-r}\cdot x)$, and from \eqref{eq:def_Z_infty^u} the definition of $Z_\infty^{(u)}$.
We now aim at proving
\begin{align} \label{eq:simplifying-the-contribution}
\sqrt{t} 
\sum_{u \in \cL^{t^a,\gamma}_{(t^a,t_2]}} 
\abs{ \overline{\Omega}^{(u)}_t - \e^{-\gamma} \mathscr R F(T_u/t) Z_\infty^{(u)} }
\xrightarrow[t\to\infty]{(\P)} 0.
\end{align}
Given $\sF_{\cL^{t^a,\gamma}}$, the $\overline{\Omega}^{(u)}_t- \e^{-\gamma} \mathscr R F(T_u/t) Z_\infty^{(u)}$ for $u \in \cL^{t^a,\gamma}_{(t^a,t]}$ are independent with the same law as $\e^{-\gamma} (Z_{t-T_u} (F_{T_u/t}-\rho(F),0) - \mathscr R F(T_u/t) Z_\infty)$ under $\P$.
Hence, we have
\begin{align} \label{ja}
\E \Biggl[ \Biggl( \sqrt{t} 
\sum_{u \in \cL^{t^a,\gamma}_{(t^a,t_2]}} 
\abs{ \overline{\Omega}^{(u)}_t - \e^{-\gamma} \mathscr R F(T_u/t) Z_\infty^{(u)} }
\Biggr) \wedge 1 
\Bigg| \sF_{\cL^{t^a,\gamma}} \Biggr]
\leq \sum_{u \in \cL^{t^a,\gamma}_{(t^a,t_2]}} \chi(T_u),
\end{align}
where we set 
$$
\chi(s) \coloneqq \Ec{\left(\e^{-\beta_t} \left\lvert Z_{t-s} (F_{s/t}-\rho(F),0) - \mathscr R F(s/t) Z_\infty \right\rvert\right) \wedge 1}.
$$
Note that, for any $r \in [0,1)$, the function $F_r$ satisfies Assumptions \ref{ass1}-\ref{ass2} and $\mathscr R F(r) = \rho(F_r-\rho(F))$, so we can apply Corollary \ref{cor:rate_of_convergence_as_expectation} to get that the right-hand side of \eqref{ja} is smaller than
\begin{align*} 
C (\log t)^3 (t-t_2)^{-1/3} \e^{-\beta_t} \# \cL^{t^a,\gamma}_{(t^a,t_2]}
= C (\log t)^3 t^{-1/3} \e^{-\beta_t/3} \# \cL^{t^a,\gamma}_{(t^a,t_2]}.
\end{align*}
Then, we take the conditional expectation given $\sF_{t^a}$ and apply \eqref{eq:number-of-killed-particles-given-sF_s^a} to get
\begin{align*}
\E \Biggl[ \Biggl( \sqrt{t} 
\sum_{u \in \cL^{t^a,\gamma}_{(t^a,t_2]}} 
\abs{ \overline{\Omega}^{(u)}_t - \e^{-\gamma} \mathscr R F(T_u/t) Z_\infty^{(u)} }
\Biggr) \wedge 1 
\Bigg| \sF_{t^a} \Biggr]
\leq C (\log t)^3 t^{-1/3} \e^{-\beta_t/3} \e^\gamma W_{t^a}
\xrightarrow[t\to\infty]{(\P)} 0,
\end{align*}
using \eqref{eq:cv-of-W_t}, $\gamma = \frac{1}{2} \log t  + \beta_t$ with $\beta_t = o(\log t)$ and $a > 1/3$. This proves \eqref{eq:simplifying-the-contribution}.
Combining \eqref{eq:killed_at_t^a}, \eqref{eq:killed_between_t_2_and_t} and \eqref{eq:simplifying-the-contribution}, the convergence we have to prove is equivalent to 
\begin{align*}
\sum_{u \in \cL^{t^a,\gamma}_{(t^a,t]}} \e^{-\beta_t} 
	\mathscr R F(T_u/t) Z_\infty^{(u)}
- \beta_t \int_0^1 \mathscr R F(r) \frac{Z_\infty}{\sqrt{2\pi}} \frac{\diff r}{r^{3/2}}
\xrightarrow[t\to\infty]{\text{(law)}} 
\int_0^1 \mathscr R F(r) M_{Z_\infty}(\diff r).
\end{align*}
This follows from Lemma \ref{lem:convergence-of-fluctuations-part-3-bis} and Corollary \ref{cor:convergence-of-fluctuations-part-2} applied respectively with $\Upsilon = g = \mathscr R F$, using that $\abs{\mathscr R F(u)} \leq C(\kappa) u$ for all $u \in [0,1]$, by \eqref{eq:RF_bound}.
\end{proof}

\section{Further results towards the final theorem}
\label{section:preliminary_results_towards_the_final_theorem}

The goal of this section is to prove several results that will be useful for the proofs of Theorems~\ref{theorem-complete} and \ref{theorem-2} in Section~\ref{section:proof_theorem}.
In particular, we start working with functions $F$ that can diverge at 0 as $x^{-\alpha}$ for $\alpha \in [0,2)$.
To keep track of uniformity of results, we introduce this new set of assumptions, for some $\alpha,\kappa \geq 0$:
\begin{enumerate}[label=(H\arabic*$_{\alpha,\kappa}$),leftmargin=*]
\item For any $x > 0$, $\abs{F(x)} \leq x^{-\alpha} \e^{\kappa x}$; \label{ass1_alpha}
\item $F$ is differentiable on $(0,\infty)$ and, for any $x>0$, $\abs{F'(x)} \leq x^{-\alpha-1} \e^{\kappa x}$; \label{ass2_alpha}
\item $F$ is twice differentiable on $(0,\infty)$ and, for any $x>0$, $\abs{F''(x)} \leq x^{-\alpha-2} \e^{\kappa x}$. \label{ass3_alpha}
\end{enumerate}

\subsection{Rate of convergence for functions with a divergence at 0}
\label{subsection:bad_functions}

In this subsection, we prove a first estimate, that generalizes Proposition \ref{prop:rate-of-CV-Z_t(F,Delta,t)} to functions $F$ which can be diverging at 0 as $x^{-\alpha}$ for $\alpha \in [0,3)$ (note that we have to go up to 3 here in order to prove Proposition \ref{prop:from-translation-gamma_t-to-0} later), but with a slower rate of convergence.
\begin{prop} \label{prop:cv_rate_for_bad_functions}
Let $\alpha \in [0,3)$, $\kappa \geq 0$ and $K > 0$. 
There exist $C = C(\kappa,\alpha,K) >0$ and $t_0 = t_0(\kappa,\alpha,K) > 0$ such that
for any $F \colon \R \to \R$ satisfying Assumptions \ref{ass1_alpha}-\ref{ass2_alpha} and any $t \geq t_0$, $\Delta \in [0,K \log t]$ and $\delta > 0$, 
\begin{align*}
\Pp{\bigl\lvert Z_t(F,\Delta) - \rho(F) Z_\infty \bigr\rvert \geq \delta}
& \leq t^{-K} + \frac{C(\log t)^2}{\delta t^{(3-\alpha)/12}}
+ \frac{C (\log t)^2 \e^\Delta}{t^{3/2-K^{-1}}} \1_{\alpha \geq 2}.
\end{align*}
\end{prop}
The main idea of the proof is to apply Proposition \ref{prop:rate-of-CV-Z_t(F,Delta,t)} to a modified version of $F$ that satisfies Assumptions \ref{ass1}-\ref{ass2}.
We introduce the function 
\[
\widetilde{F}_t(x) \coloneqq F(x \vee x_t) 
\]
where $x_t \to 0$ as $t\to\infty$ and will be chosen explicitly later but we assume for now that $(\log t)^2/\sqrt{t} \leq x_t \leq 1$.
We first prove in the following lemma that we can replace $F$ by $\widetilde{F}_t$.
\begin{lem} \label{lem:cv_rate_for_bad_functions_1}
Let $\alpha \in [0,3)$, $\kappa \geq 0$ and $K > 0$. 
There exist $C = C(\kappa,\alpha,K) >0$ and $t_0 = t_0(\kappa,\alpha,K) > 0$ such that
for any $F \colon \R \to \R$ satisfying Assumption \ref{ass1_alpha} and any $t \geq t_0$, $\Delta \in [0,K \log t]$ and $\delta > 0$, we have 
\begin{align}
\Pp{\abs{Z_t(F-\widetilde{F}_t,\Delta)} 
\geq \delta} 
& \leq t^{-K} + \frac{C\log t}{\delta} x_t^{3-\alpha}
+ \frac{C (\log t)^2 \e^\Delta}{t^{3/2-K^{-1}}} \1_{\alpha \geq 2}, \label{eq:Ftilde_part1}\\
\Pp{\abs{\rho(F-\widetilde{F}_t)} Z_\infty
	\geq \delta} 
& \leq t^{-K} + \frac{C}{\delta} x_t^{3-\alpha}.
\label{eq:Ftilde_part2}
\end{align}
\end{lem}
\begin{proof} 
We start with \eqref{eq:Ftilde_part1}.
We first work in the case $\alpha < 2$ and bound the first moment of $Z_t(F-\widetilde{F}_t,\Delta)$ on the event $A \coloneqq \{\forall r \in [0,t], \min_{u \in \cN(r)} X_u(r) > - M \}$, where we set $M = K \log t$. 
By Assumption \ref{ass1_alpha}, we have $\lvert F-\widetilde{F}_t \rvert(x) \leq C x^{-\alpha} \1_{x \leq x_t}$.
By the many-to-one lemma \eqref{eq:many-to-one-bessel}, we have, setting $\eta = (\Delta+M)/\sqrt{t}$,
\begin{align}
\Ec{\abs{Z_t(F-\widetilde{F}_t,\Delta)} \1_A}
& \leq \e^M \Eci{M}{\widetilde{Z}_t(\lvert F-\widetilde{F}_t \rvert,\Delta+M)} 
\nonumber \\
& = M \Eci{M}{ \frac{(R_t-\Delta-M)_+}{R_t}  \cdot 
\lvert F-\widetilde{F}_t \rvert \left( \frac{R_t-\Delta-M}{\sqrt{t}} \right)} 
\nonumber \\
& \leq CM \Eci{M/\sqrt{t}}{ \frac{1}{R_1 (R_1-\eta)^{\alpha-1}}
\1_{0 < R_1-\eta \leq x_t}}. \label{hhf}
\end{align}
Using the density of $R_1$ in \eqref{eq:density-of-R_1-under-P_x} and bounding $\e^{-(z-y)^2/2} (1-\e^{-2zy}) \leq 2zy$, we get that the last expectation is smaller than 
\begin{equation}
\int_\eta^{x_t+\eta} \frac{C z \diff z}{(z-\eta)^{\alpha-1}} 
\leq C (x_t+\eta) \int_0^{x_t} \frac{\diff u}{u^{\alpha-1}} 
\leq C (x_t+\eta) x_t^{2-\alpha}, \label{hha}
\end{equation}
where we used that $\alpha < 2$.
Finally, noting that $\eta \leq x_t$ for $t$ large enough, we get 
\begin{equation}
\Ec{\abs{Z_t(F-\widetilde{F}_t,\Delta)} \1_A}
\leq C(\log t) x_t^{3-\alpha}. \label{hhb}
\end{equation}
Using that $\P(A^c) \leq \e^{-M}$ by \eqref{eq:global-min-of-the-BBM}, we obtain \eqref{eq:Ftilde_part1} when $\alpha<2$.

In the case $\alpha \in [2,3)$, we have to remove some particles before taking the first moment, because the integral on the left-hand side of \eqref{hha} is infinite when $\alpha \geq 2$.
For this, let $B \coloneqq \{ \min_{u\in\cN(t)} X_u(t) - \Delta > (\log t)/K \}$, and the same calculation shows
\begin{align*}
\Ec{\abs{Z_t(F-\widetilde{F}_t,\Delta)} \1_{A\cap B}}
& \leq CM \Eci{M/\sqrt{t}}{ \frac{1}{R_1 (R_1-\eta)^{\alpha-1}}
\1_{(\log t)/(K\sqrt{t}) < R_1-\eta \leq x_t}} \\
& \leq CM \int_{\eta+(\log t)/(K\sqrt{t})}^{x_t+\eta} \frac{z \diff z}{(z-\eta)^{\alpha-1}}
\leq C(\log t) x_t^{3-\alpha},
\end{align*}
where we use that for $z \geq \eta+(\log t)/(K\sqrt{t})$, we have $z-\eta \geq c z$, where the constant $c$ depends on $K$.
Recalling that $\P(A^c) \leq \e^{-M}$ and using also that
$\P(B^c) \leq C(\log t)^2 \e^\Delta t^{-3/2+K^{-1}}$ by \eqref{eq:local-min-of-the-BBM}, we obtain \eqref{eq:Ftilde_part1}.

We now prove \eqref{eq:Ftilde_part2}. 
Using again $\lvert F-\widetilde{F}_t \rvert(x) \leq C x^{-\alpha} \1_{x \leq x_t}$ and the density of $R_1$ in \eqref{eq:density-of-R_1-under-P}, we get
\begin{equation} \label{eq:F-Ftilde-R_1}
\abs{ \rho(F-\widetilde{F}_t) }
\leq \int_0^{x_t} \frac{C}{z^\alpha} z^2 \diff z 
\leq C x_t^{3-\alpha}.
\end{equation}
Together with the fact that $\P(Z_\infty > x) \leq C/x$ (see \cite[Chap.\@ 2 Prop.\@ 4.1]{maillard2012thesis}), this proves~\eqref{eq:Ftilde_part2}.
\end{proof}
We can now prove Proposition \ref{prop:cv_rate_for_bad_functions} by applying Proposition \ref{prop:rate-of-CV-Z_t(F,Delta,t)} to a rescaled version of $\widetilde{F}_t$
\begin{proof}[Proof of Proposition \ref{prop:cv_rate_for_bad_functions}]
	By Assumption \ref{ass1_alpha}, we have 
	$\lvert \widetilde{F}_t(x) \rvert 
	\leq x_t^{-\alpha} e^{\kappa} \cdot e^{\kappa x}$ for any $x > 0$.
	Moreover, for any $0 < y \leq x$, by Assumption \ref{ass2_alpha} and distinguishing three cases ($x \leq x_t$, $y \leq x_t < x$ and $x_t < y$), we get that
	$\lvert \widetilde{F}_t(x) - \widetilde{F}_t(y) \rvert
	\leq x_t^{-\alpha-1} (x-y) \e^{\kappa x}$.
	This proves that the function $x_t^{\alpha+1} \widetilde{F}_t$ satisfies Assumptions \ref{ass1}-\ref{ass2}, hence we can apply Proposition~\ref{prop:rate-of-CV-Z_t(F,Delta,t)} to this function to get 
	\begin{align*}
	\Pp{\abs{Z_t(\widetilde{F}_t,\Delta)- \rho(\widetilde{F}_t) Z_\infty} 
	\geq \delta}
	\leq t^{-K} + C(\log t)^2 t^{-1/3} x_t^{-1-\alpha} \delta^{-1}.
	\end{align*}
	We combine this with Lemma \ref{lem:cv_rate_for_bad_functions_1}, choosing $x_t = t^{-1/12}$, which is the choice given by the equation $x_t^{3-\alpha} = t^{-1/3} x_t^{-1-\alpha}$. 
	This proves the result.
\end{proof}
Finally, we state a corollary of Proposition \ref{prop:cv_rate_for_bad_functions}. We only need it in the case $\alpha \in (0,2)$ and do not try to optimize the exponent of $t$ in the bound.
\begin{cor} \label{cor:rate_of_convergence_as_expectation_for bad_functions}
Let $\alpha \in [0,2)$, $\kappa \geq 0$ and $K > 0$. 
There exists $t_0 = t_0(\kappa,\alpha,K) > 0$ such that
for any $F \colon \R \to \R$ satisfying Assumptions \ref{ass1_alpha}-\ref{ass2_alpha} and any $t \geq t_0$, $\Delta \in [0,K \log t]$ and $\ep \geq t^{-K}$,
\[
\E\left[ \left( \ep \abs{ Z_t(F,\Delta) - \rho(F)Z_\infty} \right) \wedge 1 \right] 
\le t^{-1/12} \ep.
\]
\end{cor}
\begin{proof}
We get $\E[( \ep \abs{ Z_t(F,\Delta) - \rho(F)Z_\infty}) \wedge 1] \le C (\log t)^3 t^{(\alpha-3)/12} \ep$ for some $C = C(\kappa,\alpha,K)$ by following the proof of Corollary \ref{cor:rate_of_convergence_as_expectation} (using Proposition \ref{prop:cv_rate_for_bad_functions} instead of Proposition \ref{prop:rate-of-CV-Z_t(F,Delta,t)}). The result follows from the fact that $\alpha < 2$.
\end{proof}

\subsection{Precise effect of a shift}
\label{subsection:effect_of_a_shift}

\begin{prop} \label{prop:from-translation-gamma_t-to-0}
Let $\alpha \in [0,2)$ and $\kappa \geq 0$.
Let $F$ be a function satisfying Assumptions \ref{ass1_alpha}-\ref{ass2_alpha}-\ref{ass3_alpha}.
Let $\varepsilon > 0$.
For all $\Delta = \Delta(t) \in [0,(\frac{3}{2} - \ep) \log t]$, we have the following convergence in probability
\begin{align*}
\sqrt{t} \cdot \left(
	Z_t(F) - Z_t (F,\Delta) 
	+ \frac{\Delta}{\sqrt{t}} 
	\int_0^\infty \mathscr R F(r) \frac{Z_\infty}{\sqrt{2\pi}} \frac{\diff r}{r^{3/2}}
	\right)
\xrightarrow[t\to\infty]{} 0.
\end{align*}
\end{prop}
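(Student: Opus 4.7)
The plan is to reduce the problem, on a high-probability event, to an exact integral representation in a shift parameter $s\in[0,\Delta]$, and then invoke the rate-of-convergence estimate of Proposition~\ref{prop:cv_rate_for_bad_functions} uniformly in $s$.

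First, pick $\eta\in(0,\varepsilon)$ and introduce the event $E_t \coloneqq \{\min_{u\in\cN(t)} X_u(t) > \Delta + \eta \log t\}$. By \eqref{eq:local-min-of-the-BBM}, $\P(E_t^c)\le C(\log t)^2 t^{-(\varepsilon-\eta)}\to0$ uniformly in $\Delta\le(\frac{3}{2}-\varepsilon)\log t$, so it suffices to prove the convergence on $E_t$. On $E_t$ every particle satisfies $X_u(t)>\Delta\ge s$ for all $s\in[0,\Delta]$, so the positive parts in $Z_t(\cdot,s)$ may be dropped. Setting $G_t(y)\coloneqq yF(y/\sqrt t)$, the identity $G_t(X_u(t))-G_t(X_u(t)-\Delta)=\int_0^\Delta G_t'(X_u(t)-s)\,ds$ with $G_t'(y)=F(y/\sqrt t)+(y/\sqrt t)F'(y/\sqrt t)$, followed by Fubini, yields the exact representation
\[
\sqrt t\,\bigl(Z_t(F)-Z_t(F,\Delta)\bigr) = \int_0^\Delta \bigl[Z_t(\tilde F,s) + Z_t(F',s)\bigr]\,ds \qquad \text{on } E_t,
\]
where $\tilde F(y)\coloneqq F(y)/y$. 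Both $\tilde F$ and $F'$ satisfy Assumptions \ref{ass1_alpha}-\ref{ass2_alpha} with singularity exponent $\alpha+1\in(1,3)$, so Proposition~\ref{prop:cv_rate_for_bad_functions} applies to each.

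Writing $C_F \coloneqq \E[\tilde F(R_1)]+\E[F'(R_1)]$, we decompose the integrand as $C_F\cdot Z_t(1,s) + R_t(s)$, where $R_t(s) \coloneqq Z_t(\tilde F-\E[\tilde F(R_1)],s)+Z_t(F'-\E[F'(R_1)],s)$ is centered. Since $Z_t(1,s)=Z_t-sW_t$ on $E_t$, the main piece contributes $C_F(\Delta Z_t-\tfrac{\Delta^2}{2}W_t)=C_F\Delta Z_\infty+o_p(1)$ uniformly in $\Delta$, using \eqref{eq:control-rate-of-cv-Z} and \eqref{eq:cv-of-W_t}. For the remainder, Proposition~\ref{prop:cv_rate_for_bad_functions} applied pointwise in $s$ yields a rate $(\log t)^2 t^{(\alpha-2)/12}\to0$, provided its free parameter $K$ is chosen with $K^{-1}<\varepsilon$ so that the exceptional factor $e^s\le t^{3/2-\varepsilon}$ (only active when $\alpha\ge1$) is absorbed. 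A truncation argument turns this tail bound into an $L^1$ estimate for $|R_t(s)|$, and Fubini combined with Markov then gives $\int_0^\Delta R_t(s)\,ds=o_p(1)$ uniformly in $\Delta$.

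It remains to identify $C_F$ with the coefficient of the proposition. Integration by parts against the Bessel-3 density $p(y)=\sqrt{2/\pi}\,y^2 e^{-y^2/2}$, using $p'/p=2/y-y$, gives $\E[F'(R_1)]=-2\E[F(R_1)/R_1]+\E[R_1 F(R_1)]$, hence $C_F=-\E[F(R_1)/R_1]+\E[R_1 F(R_1)]$. Writing $H(u)=f(1-u)-f(1)$ with $f(s)=\E[F(\sqrt s\,R_1)]$, a single integration by parts in $u$ (using $f'(s)=\tfrac{1}{2\sqrt s}\E[R_1 F'(\sqrt s\,R_1)]$ and the vanishing boundary contribution at $u=0$) expresses $\frac{1}{\sqrt{2\pi}}\int_0^1 H(u) u^{-3/2}\,du$ in terms of $\E[F(R_1)]$, $\E[R_1 F(R_1)]$ and $\E[F(R_1)/R_1]$, and it rearranges to exactly $\sqrt{2/\pi}\,\E[F(R_1)]-C_F$; singularities near $y=0$ are handled by a preliminary truncation of $F$. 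The hard part will be securing uniformity in $s$ for the rate in Proposition~\ref{prop:cv_rate_for_bad_functions}, which is exactly what forces the upper bound $\Delta\le(\frac{3}{2}-\varepsilon)\log t$: otherwise the factor $e^s$ in the exceptional term of that proposition could not be absorbed.
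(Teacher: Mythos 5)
The proposal takes a genuinely different route at the structural level. Where the paper performs a discrete first-order Taylor expansion of $y\mapsto yF(y/\sqrt t)$, splitting $Z_t(F)-Z_t(F,\Delta)$ into a first-order piece $\delta\,Z_t(\Phi,\Delta)$ plus a second-order remainder $Z_t(\Phi_t-\delta\Phi,\Delta)$ with $\Phi(x)=F(x)/x+F'(x)$, you use the exact fundamental-theorem-of-calculus identity and Fubini to write $\sqrt{t}\bigl(Z_t(F)-Z_t(F,\Delta)\bigr)=\int_0^\Delta Z_t(\Phi,s)\,ds$ on $E_t$. Both routes ultimately identify the coefficient $\E[\Phi(R_1)]=\E[F'(R_1)+F(R_1)/R_1]$, and your constant identification agrees with Equation~\eqref{eq:two-constants-are-identical} and Appendix~\ref{section:two-constants-are-identical}. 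The advantage of your version is that it avoids the paper's explicit second-order remainder estimate; the price is that it requires a quantitative control of $Z_t(\Phi,s)-\E[\Phi(R_1)]Z_\infty$ that is simultaneously valid for a continuum of shifts $s\in[0,\Delta]$.

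That last point is where a genuine gap sits, in the sentence ``a truncation argument turns this tail bound into an $L^1$ estimate for $|R_t(s)|$, and Fubini combined with Markov.'' Proposition~\ref{prop:cv_rate_for_bad_functions} gives only a tail bound of the shape $\P(|R_t(s)|\geq x)\lesssim t^{-K}+(\log t)^2 x^{-1} t^{(\alpha-2)/12}+(\log t)^2 e^s t^{-3/2+K^{-1}}$, which decays merely as $x^{-1}$. Integrating this in $x$ controls $\E[|R_t(s)|\wedge 1]$ (this is exactly Lemma~\ref{lem:prune}), not $\E[|R_t(s)|]$ --- the latter is not controlled by the tail bound and may well be infinite. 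Markov plus Fubini on $\int_0^\Delta(|R_t(s)|\wedge 1)\,ds$ then says nothing about $\int_0^\Delta R_t(s)\,ds$ unless one also rules out the uncountable event $\{\exists s\in[0,\Delta]:|R_t(s)|>1\}$. This is repairable: on $A\cap E_t$, since every particle lies above $\Delta+\eta\log t$ at time $t$, one can dominate $\sup_{s\le\Delta}|R_t(s)|$ by a fixed $s$-independent quantity of the form $C\,t^{(\alpha+1)/2}(\log t)^{-(\alpha+1)}Z_t(x\mapsto e^{\kappa x},0)$, which is polynomially bounded with high probability by~\eqref{eq:bound-Z_s(F,gamma,s)-tightness}; raising the truncation level to such a $K_t$ and absorbing the extra $\log K_t$ factor still leaves a vanishing bound since $\alpha+1<3$. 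But none of this is sketched, and it is precisely the technical content of the lemma. The paper sidesteps the issue entirely by a different split of the same integral: its first-order term $\Delta Z_t(\Phi,\Delta)$ is a single random variable, so Proposition~\ref{prop:cv_rate_for_bad_functions} is invoked only once, and the genuinely second-order piece, whose singularity $x^{-\alpha-2}$ is no longer integrable against the Bessel-3 density, is controlled by a direct first-moment computation on $A\cap E_t$ where the barrier at $\Delta+1$ tames the singularity.
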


\begin{proof}
We first note that
\begin{align} \label{eq:two-constants-are-identical}
\frac{1}{\sqrt{2\pi}} \int_0^\infty \mathscr R F(r) \frac{\diff r}{r^{3/2}}
= - \Ec{\frac{F(R_1)}{R_1} + F'(R_1)},
\end{align}
which is proved by direct calculation in Appendix \ref{section:two-constants-are-identical}.
Let $E_t \coloneqq \{ \min_{u \in \cN(t)} X_u(t) > \Delta+1 \}$. 
Using \eqref{eq:local-min-of-the-BBM}, we have $\P((E_t)^c) \leq C (\log t)^2 t^{-\ep} \to 0$ as $t \to \infty$ and, therefore it is now sufficient to prove that
\begin{align} \label{na}
\1_{E_t} \sqrt{t} \cdot 
\left( Z_t(F) - Z_t (F,\Delta) - \Ec{\frac{F(R_1)}{R_1} + F'(R_1)} Z_\infty \frac{\Delta}{\sqrt{t}} \right)
& \xrightarrow[t\to\infty]{(\P)} 0.
\end{align}
Firstly, on event $E_t$, we have $Z_t(F) - Z_t (F,\Delta) = Z_t(\Phi_t,\Delta)$, where we set, for $x \in \R$, 
\[
\Phi_t (x) \coloneqq \frac{1}{x} [(x+\delta) F(x+\delta) - x F(x)] \1_{x>0}, 
\qquad \text{with} \qquad \delta \coloneqq \frac{\Delta}{\sqrt{t}}.
\]
Moreover, let $\Phi (x) \coloneqq \frac{1}{x} (F(x) + x F'(x)) \1_{x>0}$.
Note that, to first order, $\Phi_t$ behaves as $\delta \Phi$.
With this in mind, we split the proof of \eqref{na} into the two following convergences in probability: 
\begin{align} 
\1_{E_t} \sqrt{t} \cdot Z_t(\Phi_t-\delta \Phi,\Delta)
& \xrightarrow[t\to\infty]{} 0, \label{nc} \\
\Delta \cdot 
\left( Z_t(\Phi,\Delta) - \rho(\Phi) Z_\infty \right)
& \xrightarrow[t\to\infty]{} 0. \label{nd}
\end{align}
For \eqref{nd}, we simply note that it follows from Assumptions \ref{ass1_alpha}-\ref{ass2_alpha}-\ref{ass3_alpha} for $F$, that $\Phi/3$ satisfies Assumptions \hyperref[ass1_alpha]{(H1$_{\alpha+1,\kappa}$)}-\hyperref[ass2_alpha]{(H2$_{\alpha+1,\kappa}$)} and therefore we can apply 
Proposition \ref{prop:cv_rate_for_bad_functions} to $\Phi/3$ (since $\alpha+1 < 3$) and this proves \eqref{nd}.
We now prove \eqref{nc}.
For $x > 0$, by Taylor--Lagrange inequality, we have
\begin{align*}
\abs{x(\Phi_t(x) - \delta \Phi(x))}
\leq \frac{\delta^2}{2} 
\sup_{y \in [x,x+\delta]} \abs{\frac{\diff^2}{\diff y^2} (yF(y))}
\leq \frac{\delta^2}{2} \cdot 3 x^{-\alpha-1} \e^{\kappa(x+\delta)},
\end{align*}
using Assumptions \ref{ass2_alpha}-\ref{ass3_alpha} for $F$.
Therefore we get
$\abs{\Phi_t(x) - \delta \Phi(x)} \leq C \delta^2 x^{-\alpha-2} \e^{\kappa x}$ for any $x>0$.
Note that this is a strong divergence at 0, which is not integrable against $\rho$ if $\alpha \geq 1$, so that we don't expect $Z_t(x \mapsto x^{-\alpha-2} \e^{\kappa x},\Delta)$ to be tight.
However, we will bound its first moment, on the event $A \cap E_t$, where $A \coloneqq \{ \forall r \in [0,t], \min_{u \in \cN(r)} X_u(r) > - M \}$ with $M  \coloneqq \log t$.
Using the many-to-one formula \eqref{eq:many-to-one-bessel}, we get
\begin{align*} 
\Ec{\abs{Z_t(\Phi_t - \delta \Phi, \Delta)} \1_{A\cap E_t}}
& \leq M \Eci{M}{\frac{(R_t-\Delta-M)_+}{R_t} 
\cdot \abs{\Phi_t - \delta \Phi}\left( \frac{R_t-\Delta-M}{\sqrt{t}} \right)
\1_{R_t > \Delta+1+M}} \\
& \leq CM \delta^2 
\Eci{M/\sqrt{t}}{ \frac{\e^{\kappa(R_1-\eta)}}{R_1 (R_1-\eta)^{\alpha+1}} 
\1_{R_1 > \eta + t^{-1/2}}},
\end{align*}
using $\abs{\Phi_t(x) - \delta \Phi(x)} \leq C \delta^2 x^{-\alpha-2} \e^{\kappa x}$
and setting $\eta \coloneqq (\Delta+M)/\sqrt{t}$.
We now consider $t$ large enough so that $t^{-1/2} < \eta < 1/2$.
Using the density of $R_1$ in \eqref{eq:density-of-R_1-under-P_x} and $1-e^{-2zy} \leq 2zy$, the last expectation is smaller than
\begin{align*} 
\int_{\eta + t^{-1/2}}^1 \frac{C z \diff z}{(z-\eta)^{\alpha+1}} 
+ \int_1^\infty C \e^{\kappa z} \e^{-(z-M/\sqrt{t})^2/2} \diff z
& \leq \int_{\eta + t^{-1/2}}^{2\eta} \frac{C \eta \diff z}{(z-\eta)^{\alpha+1}} 
+ \int_{2\eta}^1 \frac{C \diff z}{z^\alpha} 
+ C \\
& \leq C \eta t^{\alpha/2} + \eta^{1-\alpha} + C
\leq C (\log t) t^{(\alpha-1)/2},
\end{align*}
using $\eta \leq C(\log t)/\sqrt{t}$.
Hence, since $\delta \leq C(\log t)/\sqrt{t}$, we get
\begin{align*} 
\sqrt{t} \cdot \Ec{\abs{Z_t(\Phi_t - \delta \Phi, \Delta)} \1_{A\cap E_t}}
& \leq C (\log t)^4 t^{\alpha/2 - 1}
\xrightarrow[t\to\infty]{} 0,
\end{align*}
which proves \eqref{nc} and concludes the proof, because $\P(A^c) \leq \e^{-M} \to 0$ by \eqref{eq:global-min-of-the-BBM}.
\end{proof}

\subsection{Convergence of sums along a stopping line starting at time \texorpdfstring{$ht$}{ht}}
\label{subsection:sums_along_stopping_line_2}

In this section, we prove two results estimating the rate of convergence of sums along the stopping line $\cL^{ht,\gamma}$ of a function of the hitting time. 
They are slightly different versions of Lemma \ref{lem:convergence-of-fluctuations-part-3-bis} because the stopping line starts here at time $ht$ instead of time $t^a$ and, in the second result, we allow the test function to diverge close to 1.
We also present corollaries for the convergence in distribution of sums of rescaled independent copies of $Z_\infty$ along $\cL^{ht,\gamma}$.
\begin{lem} \label{lem:sums_along_stopping_line}
Let $\gamma = \frac{1}{2} \log t + \beta_t$ with $\beta_t \in [0,10 \log t]$.
Let $\Upsilon \colon [0,\infty) \to \C$ be a measurable bounded function.
Let $h > 0$.
There exist $t_0 > 0$ a universal constant and $C = C(h) > 0$, such that, for any $\delta > 0$ and $t \geq t_0$,
\begin{align*}
\P \Biggl( \Biggl\lvert
\e^{-\beta_t} \sum_{u \in \cL^{ht,\gamma}_{(ht,\infty)}} 
	\Upsilon(T_u/t)
-  \int_h^\infty \Upsilon(r) \frac{Z_\infty}{\sqrt{2\pi}} \frac{\diff r}{r^{3/2}}
\Biggr\rvert
\geq \delta
\Biggr)
\leq C \left( \e^{-\beta_t} 
+ \frac{(\log t)^2 \norme{\Upsilon}_\infty}{t^{1/6} \delta} 
+ \frac{\beta_t \norme{\Upsilon}_\infty^2}{\e^{\beta_t} \delta^2}  \right).
\end{align*}
\end{lem}
\begin{proof}
The main difference with the proof of Lemma \ref{lem:convergence-of-fluctuations-part-3-bis} is that we handle the conditional first moment differently: we express it in the form $Z_{ht} (\chi_h,\gamma)$ for some function $\chi_h$ and then estimate it using Proposition \ref{prop:cv_rate_for_bad_functions}.

Firstly, using \eqref{eq:first-moment-on-the-stopping-line-gamma} and a change of variable in the integral, we have
\begin{align}
\E \Biggl[
\sum_{u \in \cL^{ht,\gamma}_{(ht,\infty)}}
\Upsilon(T_u/t)
\Bigg| \sF_{ht} \Biggr]
& = \e^{\beta_t} 
	\sum_{v \in \cN(ht)}
	(X_v(ht) - \gamma)_+ \e^{-X_v(ht)}  
	\int_h^\infty
	\frac{\e^{-(X_v(ht) - \gamma)^2/[2(r-h)t]}}{\sqrt{2\pi}(r-h)^{3/2}} 
	\Upsilon(r)
	\diff r \nonumber \\
& = \e^{\beta_t} Z_{ht} (\chi_h,\gamma), \label{qa}
\end{align}
where we set, for $y > 0$, 
\begin{align*}
	\chi_h(y)
	\coloneqq 
	\int_h^\infty
	\frac{\e^{-y^2h/[2(r-h)]}}{\sqrt{2\pi}(r-h)^{3/2}} 
	\Upsilon(r)
	\diff r.
\end{align*}
For the second moment, since $\Upsilon$ is bounded, we can proceed as in \eqref{pj}, but using the first bound in Lemma \ref{lem:second-moment-number-of-killed-particles} instead of the second one, and we get
\begin{align} \label{qb}
\Var \Biggl(
\sum_{u \in \cL^{ht,\gamma}_{(ht,\infty)}}
\Upsilon(T_u/t)
\Bigg| \sF_{ht} \Biggr) 
\leq C \norme{\Upsilon}_\infty^2 
\e^{\gamma} W_{ht}.
\end{align}
Combining \eqref{qa} and \eqref{qb}, it follows from Chebyshev's inequality given $\sF_{ht}$ that
\begin{equation} \label{eq:bound_cond}
\P \Biggl( \Biggl\lvert
\e^{-\beta_t} \sum_{u \in \cL^{ht,\gamma}_{(ht,\infty)}} 
	\Upsilon(T_u/t)
- Z_{ht} (\chi_h,\gamma)
\Biggr\rvert
\geq \delta
\Bigg| \sF_{ht} \Biggr) 
\leq C \delta^{-2} \e^{-2 \beta_t} \norme{\Upsilon}_\infty^2 
\e^{\gamma} W_{ht}.
\end{equation}
To bound the non-conditional probability, we first restrict ourselves to the event $A \coloneqq \{\forall r \in [0,ht], \min_{u \in \cN(r)} X_u(r) > -\beta_t \}$ by bounding $\P(A^c) \leq \e^{-\beta_t}$ by \eqref{eq:global-min-of-the-BBM}, and then we take the expectation of the right-hand side of \eqref{eq:bound_cond} on the event $A$, noting that
$\E[\sqrt{ht} W_{ht} \1_A] \leq C \beta_t$ by \eqref{eq:bound-Z_s(F,gamma,s)-with-barrier-at--M}.
This yields
\begin{align} \label{qc}
\P \Biggl( \Biggl\lvert
\e^{-\beta_t} \sum_{u \in \cL^{ht,\gamma}_{(ht,\infty)}} 
	\Upsilon(T_u/t)
- Z_{ht} (\chi_h,\gamma)
\Biggr\rvert
\geq \delta \Biggr) 
\leq \e^{-\beta_t} + \frac{C \beta_t}{\sqrt{h}} \delta^{-2} \e^{-\beta_t} \norme{\Upsilon}_\infty^2.
\end{align}
Now we have to estimate $Z_{ht} (\chi_h,\gamma)$ and for this we want to apply Proposition \ref{prop:cv_rate_for_bad_functions}.
Firstly, note that $\chi_h$ is differentiable on $(0,\infty)$ and, for any $y>0$, using the variable change $u = y\sqrt{h}/\sqrt{r-h}$, 
\[
\abs{\chi_h'(y)}
\leq \norme{\Upsilon}_\infty
	\int_h^\infty
	\frac{yh\e^{-y^2h/[2(r-h)]}}{\sqrt{2\pi}(r-h)^{5/2}} 
	\diff r
= \frac{\norme{\Upsilon}_\infty}{y^2 \sqrt{h}} \sqrt{\frac{2}{\pi}} 
\int_0^\infty u^2 \e^{-u^2/2} \diff u
= \frac{\norme{\Upsilon}_\infty}{y^2 \sqrt{h}}.
\]
Similarly, we have $\chi_h(y) \leq \frac{\norme{\Upsilon}_\infty}{y\sqrt{h}}$.
Hence, the function $\frac{\sqrt{h}}{\norme{\Upsilon}_\infty} \chi_h$ satisfies Assumptions \ref{ass1_alpha}-\ref{ass2_alpha} with $\alpha = 1$ and $\kappa = 0$.
Therefore, it follows from Proposition \ref{prop:cv_rate_for_bad_functions} that
\begin{equation} \label{qd}
\Pp{\abs{Z_{ht}(\chi_h ,\gamma)- \rho(\chi_h) Z_\infty} \geq \delta}
\leq (ht)^{-10} + C(\log t)^2 (ht)^{-1/6} \frac{\norme{\Upsilon}_\infty}{\delta \sqrt{h}}.
\end{equation}
Moreover, we have $\E[\e^{-a R_1^2/2}] = (a+1)^{-3/2}$ by a direct calculation using the density in \eqref{eq:density-of-R_1-under-P}, and so we get $\rho(\chi_h) = \int_h^\infty \frac{\Upsilon(r) \diff r}{\sqrt{2\pi} r^{3/2}}$.
Hence, the result follows from \eqref{qc} and \eqref{qd}.
\end{proof}
\begin{cor} \label{cor:convergence-of-fluctuations}
Let $\gamma = \frac{1}{2} \log t + \beta_t$ with $\beta_t \to \infty$ and $\beta_t/\log t \to 0$.
Let $g \colon [0,\infty) \to \R$ be a measurable bounded function. 
For any $h > 0$, we have
\[
\E \Biggl[ \exp \Biggl( i \sum_{u \in \cL^{ht,\gamma}_{(ht,\infty)}} \e^{-\beta_t}
	g(T_u/t) \left( Z_\infty^{(u)} - \beta_t \right) \Biggr) \Bigg| \sF_{ht} \Biggr]
\xrightarrow[t\to\infty]{(\P)} 
\Ecsq{\exp \left( i\int_h^\infty g(r) M_{Z_\infty}(\diff r) \right)}{Z_\infty}.
\]
\end{cor}
\begin{proof} 
It is enough to prove the result with $\sF_{\cL^{ht,\gamma}}$ instead of $\sF_{ht}$.
The proof follows exactly the lines of the proof of Corollary \ref{cor:convergence-of-fluctuations-part-2}, by replacing the interval $(t^a,t]$ by $(ht,\infty)$ and the use of Lemma \ref{lem:convergence-of-fluctuations-part-3-bis} by the use of Lemma \ref{lem:sums_along_stopping_line}.
\end{proof}
In the second result, we allow the test function $\Upsilon$ to diverge close to 1. This result, as well as its corollary below, is tailored for its future use in the proof of Proposition \ref{prop:regularize}.
\begin{lem} \label{lem:sums_along_stopping_line_2}
Let $\gamma = \frac{1}{2} \log t + \beta_t$ with $\beta_t \in [0,10 \log t]$ such that $\beta_t \to \infty$ as $t \to \infty$. Let $C > 0$ and $\theta \in (0,1)$. 
For every $t>0$, let $\Upsilon = \Upsilon^{(t)} \colon [0,1) \to \C$ be a measurable function such that, $\abs{\Upsilon(r)} \leq C (1-r)^{-\theta}$, $r \in [0,1)$.
Then, for any $h \in (0,1)$, we have 
\begin{align} \label{qe}
\beta_t \Biggl( 
\e^{-\beta_t} \sum_{u \in \cL^{ht,\gamma}_{(ht,t)}} 
	\Upsilon(T_u/t)
- \int_h^1 \Upsilon(r) \frac{Z_\infty}{\sqrt{2\pi}} \frac{\diff r}{r^{3/2}}
\Biggr)
\xrightarrow[t\to\infty]{(\P)} 0.
\end{align}
\end{lem}
\begin{proof}
	We can without loss of generality that the constant $C$ from the statement equals $1$.
Fix some $k > 1/(1-\theta)$ and let $\Upsilon_1(r) \coloneqq \Upsilon(r) \1_{r < 1-\beta_t^{-k}}$.
Noting that $\norme{\Upsilon_1}_\infty \leq\beta_t^{k\theta}$, it follows from Lemma \ref{lem:sums_along_stopping_line} that \eqref{qe} holds with $\Upsilon_1$ instead of $\Upsilon$.
Therefore, it is now sufficient to prove 
\begin{align} \label{qf}
\beta_t \e^{-\beta_t} \sum_{u \in \cL^{ht,\gamma}_{(t-t\beta_t^{-k},t)}} \abs{\Upsilon(T_u/t)}
\xrightarrow[t\to\infty]{(\P)} 0
\qquad \text{and} \qquad 
\beta_t \int_{1-\beta_t^{-k}}^1 \abs{\Upsilon(r)} \frac{\diff r}{r^{3/2}}
\xrightarrow[t\to\infty]{} 0.
\end{align}
Using \eqref{eq:first-moment-on-the-stopping-line-gamma} and a change of variable in the integral,
\begin{align*}
\E \Biggl[ \beta_t \e^{-\beta_t} 
\sum_{u \in \cL^{ht,\gamma}_{(t-t\beta_t^{-k},t)}}
\abs{\Upsilon(T_u/t)}
\Bigg| \sF_{ht} \Biggr]
& \leq \beta_t
	Z_{ht}(1,\gamma)
	\int_{1-\beta_t^{-k}}^1
	\frac{\abs{\Upsilon(r)} \diff r}{\sqrt{2\pi}(r-h)^{3/2}}
\leq C \beta_t^{1-k(1-\theta)} Z_{ht}(1,\gamma),
\end{align*}
where $C$ depends on $h$. Since $(Z_{ht}(1,\gamma))_{t\geq1}$ is tight by Lemma~\ref{lem:tightness} and using that $k > 1/(1-\theta)$, the first part of \eqref{qf} follows. The second part of \eqref{qf} is straightforward.
\end{proof}
\begin{cor} \label{cor:convergence-of-fluctuations-2}
Let $\gamma = \frac{1}{2} \log t + \beta_t$ with $\beta_t \in [0,10 \log t]$ such that $\beta_t \to \infty$ as $t \to \infty$.
Let $C > 0$ and $\theta \in (0,1)$. 
Let $g \colon [0,1) \to \C$ be a measurable function such that, $\abs{g(r)} \leq C (1-r)^{-\theta}$, $r \in [0,1)$.
Then, for any $h \in (0,1)$ and $p \in (0,1/\theta)$, we have
\[
	\sum_{u \in \cL^{ht,\gamma}_{(ht,t-t \e^{-p\beta_t})}} \e^{-\beta_t}
	g(T_u/t) \left( Z_\infty^{(u)} - \beta_t \right)
	\xrightarrow[t\to\infty]{\text{(law)}} 
	\int_h^1 g(r) M_{Z_\infty}(\diff r).
\]
\end{cor}
\begin{proof} 
The proof follows the lines of the proof of Corollary \ref{cor:convergence-of-fluctuations-part-2}
with $\cL^{ht,\gamma}_{(ht,t-t \e^{-p\beta_t})}$ instead of~$\cL^{t^a,\gamma}_{(t^a,t]}$ and applying Lemma \ref{lem:sums_along_stopping_line_2} instead of Lemma \ref{lem:convergence-of-fluctuations-part-3-bis}.
The single step which should be justified differently is \eqref{pc} because the function $g$ is not bounded anymore here. However, for $s \leq t-t \e^{-p\beta_t}$, we have $g(s/t) \leq \e^{p\beta_t\theta}$ and therefore it is still true that $\psi(\e^{-\beta_t} g(s/t)) \to 0$ uniformly in $s \leq t-t \e^{-p\beta_t}$, using that $p < 1/\theta$.
\end{proof}

\section{Proof of the main results}
\label{section:proof_theorem}

In this section, we state a second theorem, Theorem~\ref{theorem-2}, which complements Theorem~\ref{theorem-complete}. 
Instead of comparing $Z_{at}(F)$ with its limit $\rho(F)Z_\infty$, it compares it with $Z_t(G)$ for some appropriate function $G$. This allows to prove a convergence conditionally on $\sF_t$ instead of $\sF_{\ep t}$ with $\ep \to 0$. 
Therefore, Theorem~\ref{theorem-2} contains a bit more information than Theorem~\ref{theorem-complete} on the effect of the conditioning.
However, Theorem~\ref{theorem-complete} is not a direct consequence of Theorem~\ref{theorem-2} because taking the limit $\varepsilon \to 0$ requires an additional argument which is where we make use of the non-conditional convergence proved in Proposition~\ref{prop:convergence-with-translation-gamma_t}.

In Section~\ref{sec:another-theorem}, we introduce further notation and state Theorem~\ref{theorem-2}.
In Section~\ref{section:weaker_version_theorem_2}, we prove a weaker version of Theorem~\ref{theorem-2} for functions $F$ that do not diverge at 0.
Section~\ref{subsection:regularize} contains a tool to compare $Z_t(F)$ for $F$ diverging at $0$ with $Z_{(1-\varepsilon)t}(G)$ for some non-diverging function $G$.
This is used in Section~\ref{subsection:proof_theorem2} to prove Theorem~\ref{theorem-2} as a consequence of the weaker version mentioned before.
Finally, Theorem~\ref{theorem-complete} is established in Section~\ref{subsection:proof_theorem1}.


\subsection{Another theorem}
\label{sec:another-theorem}

We first introduce some additional notation.
Let $F \colon \R \to \R$ and $h \in [0,1]$. Recall from \eqref{eq:def_F_h} that we introduced $F_h(x) = F(\sqrt{1-h}\cdot x)$. We define a new auxiliary function: for $x\geq 0$,
\begin{equation}
	\label{eq:def F^h}
F^h(x) \coloneqq \Eci{x\sqrt{h}}{F(R_{1-h})} = \begin{cases}
	\Eci{x\sqrt{h/(1-h)}}{F_h(R_1)}, & h\in [0,1)\\
	F(x), & h = 1.
\end{cases}
\end{equation}
Note that $F^0 = \rho(F)$.
This function appears for the following reason: 
recalling the definition of $\widetilde{Z}_t^{ht,\gamma}(F,\gamma)$ in \eqref{eq:def_Z_t^t_0,gamma(F,gamma)}, applying the branching property at time $ht$ and then the many-to-one formula \eqref{eq:many-to-one-bessel}, we have for $h\in [0,1)$,
\begin{align}
\Ecsq{\widetilde{Z}_t^{ht,\gamma} (F,\gamma)}{\sF_{ht}}
& = \sum_{v \in \cN(ht)} 
	\e^{-\gamma} \1_{X_v(ht) > \gamma} 
	\Eci{X_v(ht)-\gamma}{\widetilde{Z}_{(1-h)t}(F_h)} \nonumber \\
& = \sum_{v \in \cN(ht)} 
	(X_v(ht)-\gamma)_+ \e^{-X_v(ht)} 
	\Eci{(X_v(ht)-\gamma)/\sqrt{t(1-h)}}{F_h(R_1)} \nonumber \\
& = Z_{ht}(F^h,\gamma), \label{eq:property_F^h}
\end{align}
and \eqref{eq:property_F^h} also trivially holds for $h=1$.
This means that if we want to compare $Z_t(F)$ with an $\sF_{ht}$-measurable quantity, a natural candidate is $Z_{ht}(F^h)$. This motivates the statement of Theorem~\ref{theorem-2} below. We set $Z_\infty(F) \coloneqq \rho(F)Z_\infty$.

\begin{thm} \label{theorem-2}
	Let $F \colon \R \to \R$.
	Assume $F$ is twice differentiable on $(0,\infty)$ and, for any $x>0$, $\abs{F''(x)} \leq C x^{-\alpha-2} \e^{C x}$ for some $\alpha \in [0,2)$ and $C>0$.
	Then, $\rho(F_{\cdot/a})\in \mathcal G(r^{-3/2} \1_{1\leq r < a} \diff r)$ for all $a\in [1,\infty]$ and conditionally on $\sF_t$, as $t\to\infty$, the finite-dimensional distributions of
	\begin{equation}\label{eq:process_at_t_2}
	\sqrt t \cdot \left( 
	Z_{at}(F) - Z_t(F^{1/a})
	+ \frac{\log t}{2 \sqrt{t}} 
	\int_1^a \rho(F_{r/a})\frac{Z_\infty}{\sqrt{2\pi}}  \frac{\diff r}{r^{3/2}}
	\right)_{a\in[1,\infty]}
	\end{equation}
	converge weakly in probability to the finite-dimensional distributions, conditionally on $Z_\infty$, of
	\begin{equation} \label{eq:process_limit_2}
	\left( \int_1^a \rho(F_{r/a}) M_{Z_\infty}(\diff r)\right)_{a\in[1,\infty]}.
	\end{equation}		
	More generally, if $F_1,\ldots,F_n$ are functions as above, and we consider the processes defined as in \eqref{eq:process_at_t} with $F = F_i$ for each $i=1,\ldots,n$, then the above convergence holds jointly in $i=1,\ldots,n$.
\end{thm}

We conclude this subsection by stating basic properties of the new auxiliary function $F^h$.
Let $h,h' \in [0,1]$.
Using successively twice the definition of $F^h$, the scaling and Markov properties of the Bessel process, for any $x\geq0$, we have
\begin{align}
(F^h)^{h'}(x) & = \Eci{x\sqrt{h'}}{F^h(R_{1-h'})}
= \Eci{x\sqrt{h'}}{\Eci{R_{1-h'}\sqrt{h}}{F(R_{1-h})}} \nonumber \\
& = \Eci{x\sqrt{hh'}}{\Eci{R_{h(1-h')}}{F(R_{1-h})}}
= \Eci{x\sqrt{hh'}}{F(R_{1-hh'})}
=F^{hh'}(x). \label{eq:Fh_semi_group}
\end{align}
Similarly, if $r \in [0,1)$, we have $\E[F^h(R_{1-r})] = \E[\E_{R_{h(1-r)}}[F(R_{1-h})]] = \E[F(R_{1-rh})]$, in other words, 
\begin{equation} \label{eq:Fh_and_rho}
\rho((F^h)_r) = \rho(F_{rh}). 
\end{equation}
With $r=0$, this gives $\rho(F^h) = \rho(F)$. Hence, we get, for any $r\geq0$, using \eqref{eq:R F with F_h},
\begin{equation} \label{eq:formula R F^h}
\mathscr R F^h(r) =  \rho(F_{rh})\1_{r<1} - \rho(F) = \mathscr R F(rh) - \rho(F_{rh}) \1_{1 \leq r< 1/h}.
\end{equation}
In particular, we have
\begin{equation}
	\label{eq:formula R F^h r<1}
	\mathscr R F^h(r) = \mathscr R F(rh),\quad r<1.
\end{equation}
Furthermore, \eqref{eq:R F with F_h} and \eqref{eq:formula R F^h} yield,
\begin{equation}
	\label{eq:more_RFh}
	\mathscr R F^h(r) - \rho(F)\mathscr R 1(r) = \mathscr R F(rh)\1_{r<1}.
\end{equation}

\subsection{A weaker form of Theorem~\ref{theorem-2}}
\label{section:weaker_version_theorem_2}

In this section, we prove the following result, close to Theorem \ref{theorem-2}: the only difference is that the function $F$ is not allowed to diverge at 0.
%
\begin{prop} \label{prop:weaker_version_theorem_2}
	Let $F \colon \R \to \R$.
	Assume $F$ is twice differentiable on $(0,\infty)$ and, for any $x>0$, $\abs{F''(x)} \leq C \e^{C x}$ for some $C>0$.
	Then, conditionally on $\sF_t$, as $t\to\infty$, the finite-dimensional distributions of \eqref{eq:process_at_t_2} converge weakly in probability to the finite-dimensional distributions, conditionally on $Z_\infty$, of \eqref{eq:process_limit_2}.
\end{prop}

\begin{proof}
	Note that \eqref{eq:process_at_t_2} and \eqref{eq:process_limit_2} are simply zero if $a=1$, so it is enough to work with $a \in (1,\infty]$ in the proof.
	We repetitively use in the proof that, up to dividing it by a constant, the function $F$ satisfies Assumptions \ref{ass1}-\ref{ass2} and \hyperref[ass1_alpha]{(H1$_{0,\kappa}$)}-\hyperref[ass2_alpha]{(H2$_{0,\kappa}$)}-\hyperref[ass3_alpha]{(H3$_{0,\kappa}$)} for $\kappa$ large enough. The same is true for the function $F^{1/a}$ for every $a\in(1,\infty]$, by Lemma~\ref{lem:Fh}.
	We consider $\gamma = \frac{1}{2} \log t + \beta_t$, with $\beta_t \to \infty$ and $\beta_t = o(\log t)$ as $t \to \infty$.
	
	We first claim that we can replace \eqref{eq:process_at_t_2} by%
	\begin{equation} \label{eq:process2_aux}
		\sqrt t \cdot \left( 
		Z_{at}(F,\gamma) - Z_t(F^{1/a},\gamma)
		- \frac{\beta_t}{\sqrt{t}} 
		\int_1^a \rho(F_{r/a})\frac{Z_\infty}{\sqrt{2\pi}}  \frac{\diff r}{r^{3/2}}
		\right)_{a\in(1,\infty]},
	\end{equation}
	with the convention $Z_\infty(F,\gamma) \coloneqq \rho(F) Z_\infty$.
	Indeed, by \eqref{eq:formula R F^h}, we have for every $a\in (1,\infty]$,
	\begin{align*}
	\int_1^a \rho(F_{r/a})\frac{\diff r}{r^{3/2}} 
	&= \int_0^\infty \mathscr R F^{1/a}(r) \frac{\diff r}{r^{3/2}} - \int_0^\infty \mathscr R F(r/a) \frac{\diff r}{r^{3/2}} \\
	&= \int_0^\infty \mathscr R F^{1/a}(r) \frac{\diff r}{r^{3/2}} - \frac 1 {\sqrt{a}} \int_0^\infty \mathscr R F(r) \frac{\diff r}{r^{3/2}}.
	\end{align*}
	It follows from Proposition \ref{prop:from-translation-gamma_t-to-0} that for every $a\in(1,\infty]$, we have the following convergence in probability as $t\to\infty$:
	\begin{equation}
		\sqrt t \cdot \left(Z_{at}(F) - Z_t(F^{1/a}) - (Z_{at}(F,\gamma) - Z_t(F^{1/a},\gamma)) + \frac{\gamma}{\sqrt{t}} \int_1^a \rho(F_{r/a})\frac{Z_\infty}{\sqrt{2\pi}}  \frac{\diff r}{r^{3/2}}\right) \to 0.
	\end{equation}
	This implies\footnote{We repetitively use in this proof that we can modify the process we are considering as long as the difference tends to 0 in probability. The fact that this operation is allowed when proving a weak convergence in probability of conditional finite-dimensional distributions is stated in \cite[Remark A.3]{maillardpain2019}.} that the finite-dimensional distributions of \eqref{eq:process2_aux} and of \eqref{eq:process_at_t_2}, conditioned on $\mathscr F_t$, converge to the same limit, as claimed.
	
	We now follow the strategy presented in Section \ref{subsection:scheme_barrier} with $t_0=t$ and $\gamma$ as above. 
	For any fixed $a \in (1,\infty]$, we decompose as in \eqref{eq:decompo-Z_t(F,gamma)}:
	\begin{equation} \label{eq:decompo1}
	Z_{at}(F,\gamma) = 
	\widetilde{Z}_{at}^{t,\gamma}(F,\gamma)
	+ \sum_{u\in\cL^{t,\gamma}} \Omega^{(u)}_{at},
	\end{equation}
	where $\Omega^{(u)}_{at}$ is the contribution of the progeny of particle $u$ in $Z_{at}(F,\gamma)$, see \eqref{eq:def_Omega} when $a<\infty$ and recall $\Omega^{(u)}_{at}=0$ if $T_u > at$. 
	In the case $a=\infty$, we have $\Omega^{(u)}_{at} = \rho(F) \e^{-X_u(T_u)} Z_\infty^{(u)}$, with $Z_\infty^{(u)}$ defined in \eqref{eq:def_Z_infty^u}, and $\widetilde{Z}_\infty^{t,\gamma}(F,\gamma) = \rho(F) \widetilde{Z}_\infty^{t,\gamma}$, where $\widetilde{Z}_\infty^{t,\gamma}$ is the a.s.\@ limit of the nonnegative martingale $(\widetilde{Z}_s^{t,\gamma}(1,\gamma))_{s \geq t}$, so that the decomposition \eqref{eq:decompo1} corresponds exactly to the one in \cite[Eq.\@ (3.2)]{maillardpain2019}.
	
	We first study $\widetilde{Z}_{at}^{t,\gamma}(F,\gamma)$, and assume first that $a \in (1,\infty)$.
	By \eqref{eq:property_F^h}, we have
	\begin{align}
	\Ecsq{\widetilde{Z}_{at}^{t,\gamma}(F,\gamma) }{\sF_{t}}
	= Z_{t}(F^{1/a},\gamma). \label{ggb2}
	\end{align}
	Moreover, proceeding as in \eqref{eq:conditional_variance_1} and then using the second bound of Lemma~\ref{lem:second-moment-Z_s(F,t,Delta)} with $s=(a-1)t$, we get
	\begin{align}
	\Varsq{\widetilde{Z}_{at}^{t,\gamma}(F,\gamma)}{\sF_{t}}
	& \leq C \e^{-\gamma} \sum_{v \in \cN(t)} 
		\1_{X_v(t) > \gamma} \e^{-X_v(t)}
		\left(	
		1 + \frac{X_v(t) - \gamma}{\sqrt{(a-1)t}}
		\right) \nonumber \\
	& \leq C \frac{\e^{-\beta_t}}{t} 
	Z_{t} (x\mapsto x^{-1} + 1, \gamma), \label{ggc2}
	\end{align}
	where $C$ depends on $a$.
	Combining \eqref{ggb2}, \eqref{ggc2} and Lemma~\ref{lem:tightness} proves that, for any $a \in (1,\infty)$, 
	\begin{equation*}
	\sqrt{t} ( \widetilde{Z}_{at}^{t,\gamma}(F,\gamma) - Z_{t}(F^{1/a},\gamma)) \xrightarrow[t\to\infty]{(\P)} 0.
	\end{equation*}
	When $a=\infty$, this convergence has been proved in \cite[Lemma~5.1]{maillardpain2019}.
	Hence, we can replace \eqref{eq:process2_aux} by
	\begin{equation}
		\label{ggd2}
		\sqrt t \cdot \left(\sum_{u\in\cL^{t,\gamma}} \Omega^{(u)}_{at} - \frac{\beta_t}{\sqrt{t}} 
		\int_1^a \rho(F_{r/a})\frac{Z_\infty}{\sqrt{2\pi}}  \frac{\diff r}{r^{3/2}}
		\right)_{a\in(1,\infty]}.
	\end{equation}
	
	It therefore remains to show that the finite-dimensional distributions of \eqref{ggd2}, conditioned on $\sF_t$, converge to those, conditioned on $Z_\infty$, of \eqref{eq:process_limit_2}.
	We first note that, by \eqref{eq:local-min-of-the-BBM},
	\begin{equation} \label{eq:part_at_time_t}
		\Pp{\cL^{t,\gamma}_{\{t\}} \neq \emptyset}
		\leq \Pp{\min_{u\in\cN(t)} X_u(t) \leq \gamma}
		\xrightarrow[t \to \infty]{} 0,
	\end{equation}
	so that it is enough in \eqref{ggd2} to sum over particles $u\in\cL^{t,\gamma}_{(t,\infty)}$ instead of $u\in\cL^{t,\gamma}$. We now proceed in a similar way as Lemma~\ref{lem:convergence-of-fluctuations}, so we skip some details here.
	We first fix $a\in (1,\infty)$ and aim at replacing the contributions $\Omega^{(u)}_{at}$ by their limiting behavior (as detailed below this step is immediate if $a=\infty$).
	First note that, setting $t_2=t-t\e^{-2\beta_t}$ and proceeding as in \eqref{eq:killed_between_t_2_and_t}, we have
	\begin{equation} \label{eq:empty_part_1}
	\Pp{\cL^{t,\gamma}_{(a t_2,at]} \neq \emptyset} \xrightarrow[t \to \infty]{} 0.
	\end{equation}
	For the contributions of particles $u$ killed in the time interval $(t,at_2]$, we proceed as for the proof of \eqref{eq:simplifying-the-contribution}: we note that, given $\sF_{\cL^{t,\gamma}}$, the $\Omega^{(u)}_{at} - \e^{-\gamma} \rho(F_{T_u/(at)}) Z_\infty^{(u)}$ for $u \in \cL^{\ep t,\gamma}_{(t,a t_2]}$ are independent with the same law as $\e^{-\gamma} (Z_{at-T_u} (F_{T_u/(at)},0) - \rho(F_{T_u/(at)}) Z_\infty)$ under $\P$, 
	then apply Corollary \ref{cor:rate_of_convergence_as_expectation} to get
	\begin{equation*} 
	\E \Biggl[ \Biggl( \sqrt{t} 
	\sum_{u \in \cL^{t,\gamma}_{(t,at_2]}} 
	\abs{ \Omega^{(u)}_{at} - \e^{-\gamma} \rho(F_{T_u/(at)}) Z_\infty^{(u)} }
	\Biggr) \wedge 1 
	\Bigg| \sF_{\cL^{t,\gamma}} \Biggr]
	\leq C (\log t)^3 t^{-1/3} \e^{-\beta_t/3} \# \cL^{t,\gamma}_{(t,at_2]}.
	\end{equation*}
	We now note that the conditional expectation of the right-hand side given $\sF_{t}$ tends to 0 in probability by \eqref{eq:number-of-killed-particles-given-sF_s^a}, \eqref{eq:cv-of-W_t} and $\beta_t = o(\log t)$.
	Combining this with \eqref{eq:empty_part_1} and the fact that $\Omega_{at}^{(u)} = 0$ when $T_u > at$, we obtain
	\begin{align} \label{ggf}
		\sqrt{t} \sum_{u \in \cL^{t,\gamma}_{(t,\infty)}}
		\abs{ \Omega^{(u)}_{at} - \e^{-\gamma}\rho(F_{T_u/(at)})\1_{T_u\leq at} Z_\infty^{(u)} }
		\xrightarrow[t\to\infty]{(\P)} 0.
	\end{align}
	This is also true for $a=\infty$, because in that case the left-hand side equals zero.
	Combining this with \eqref{eq:part_at_time_t}, it follows that we can replace \eqref{ggd2} by
	\begin{align}
		\label{ggf2}
		\Biggl(\e^{-\beta_t}\sum_{u\in\cL^{t,\gamma}_{(t,at]}} \rho(F_{T_u/(at)}) Z_\infty^{(u)} - \beta_t 
		\int_1^a \rho(F_{r/a})\frac{Z_\infty}{\sqrt{2\pi}}  \frac{\diff r}{r^{3/2}}
		\Biggr)_{a\in(1,\infty]}.
	\end{align}
	Moreover, by Lemma \ref{lem:sums_along_stopping_line}, we have for every $a\in(1,\infty]$,
	\begin{align} \label{ggg1}
		\e^{-\beta_t} \sum_{u\in\cL^{\ep t,\gamma}_{(t,at]}} \rho(F_{T_u/(at)}) \beta_t
		- \beta_t \int_1^a \rho(F_{r/a})\frac{Z_\infty}{\sqrt{2\pi}}  \frac{\diff r}{r^{3/2}}
		\xrightarrow[t\to\infty]{(\P)} 0.
	\end{align}
	Therefore, we can now replace \eqref{ggf2} by 
	\begin{equation} \label{gge1}
	\Biggl( 
	\sum_{u\in\cL^{t,\gamma}_{(t,at]}} \e^{-\beta_t} \rho(F_{T_u/(at)}) (Z_\infty^{(u)} - \beta_t) \Biggr)_{a\in(1,\infty]}.
	\end{equation}
	Hence, in order to prove the result, by \cite[Proposition A.1]{maillardpain2019}, it is enough to prove that the conditional characteristic function of finite-dimensional marginals of \eqref{gge1} given $\sF_{t}$, as $t\to\infty$, converges in probability to the characteristic function of finite-dimensional marginals of \eqref{eq:process_limit_2} given $Z_\infty$.
	But this follows from Corollary \ref{cor:convergence-of-fluctuations} applied to the function $g(r) = \sum_{k=1}^n \xi_k \rho(F(r/a_k))\1_{r\le a_k}$, for some fixed $n \geq 1$, $\xi_1,\dots,\xi_n \in \R$ and $a_1,\dots,a_n \in(1,\infty]$.
\end{proof}

\subsection{Regularizing the function at 0}
\label{subsection:regularize}

We now deal with functions diverging as $x^{-\alpha}$ at 0, for some $\alpha \in [0,2)$.
The following result will allow us to regularize such a function at 0: note that the function $F^{1-\ep}$ defined in \eqref{eq:def F^h} is smooth and locally bounded at 0 by Lemma~\ref{lem:Fh}. 
The proof relies again on the scheme of Section \ref{subsection:scheme_barrier} but note that, exceptionally, we have to work here with $\beta_t$ of order $\log t$.
\begin{prop} \label{prop:regularize}
	Let $F \colon \R \to \R$. Assume $F$ is twice differentiable on $(0,\infty)$ and, for any $x>0$, $\abs{F''(x)} \leq C x^{-\alpha-2} \e^{C x}$ for some $\alpha \in [0,2)$ and $C>0$.
	As $t\to\infty$ and then $\ep \to 0$, we have
	\begin{equation*} 
	\sqrt{t} \left( 
	Z_t(F) - Z_{(1-\ep)t}(F^{1-\ep}) 
	+ \frac{\log t}{2\sqrt{t}} \int_{1-\ep}^1 \rho(F_r) \frac{Z_\infty}{\sqrt{2\pi}} \frac{\diff r}{r^{3/2}}
	\right) 
	\xrightarrow{(\P)} 0.
	\end{equation*}
\end{prop}

\begin{proof}
	First note that, without loss of generality, we can assume that $\alpha >0$ and that the function $F$ satisfies Assumptions \ref{ass1_alpha}-\ref{ass2_alpha}-\ref{ass3_alpha} for some $\kappa$ large enough (note that if $\alpha=0$ then the function $F$ could explode logarithmically at 0).
	
	Then, consider $\gamma = \frac{1}{2} \log t + \beta_t$, with $\beta_t = b \log t$ for some $b \in ((\alpha - 1)\vee 0, 1)$.
	Observe that the convergence in the statement is equivalent to
	\begin{equation} \label{eq:previous_statement}
	\sqrt{t} \left( 
	Z_t(F,\gamma) - Z_{(1-\ep)t}(F^{1-\ep},\gamma) 
	- \frac{\beta_t}{\sqrt{t}} \int_{1-\ep}^1 \rho(F_r) \frac{Z_\infty}{\sqrt{2\pi}} \frac{\diff r}{r^{3/2}}
	\right) 
	\xrightarrow[t\to\infty,\, \ep\to 0]{(\P)} 0.
	\end{equation}
	This follows from the same argument as in the proof of Proposition~\ref{prop:weaker_version_theorem_2}, following \eqref{eq:process2_aux}: we apply Proposition \ref{prop:from-translation-gamma_t-to-0} to $Z_t(F,\gamma)$ and $Z_{(1-\ep)t}(F^{1-\ep},\gamma)$ (by Lemma~\ref{lem:Fh}, up to dividing it by a constant which can depend on $\varepsilon$, the function $F^{1-\varepsilon}$ satisfies Assumptions \hyperref[ass1_alpha]{(H1$_{0,\kappa}$)}-\hyperref[ass2_alpha]{(H2$_{0,\kappa}$)}-\hyperref[ass3_alpha]{(H3$_{0,\kappa}$)} for $\kappa$ large enough) and use the fact that
	\[
		\frac{\gamma}{\sqrt{t}} \int_0^\infty \mathscr{R}F(r) \frac{Z_\infty}{\sqrt{2\pi}} \frac{\diff r}{r^{3/2}}
		- \frac{\gamma}{\sqrt{(1-\varepsilon)t}} \int_0^\infty \mathscr{R}F^{1-\varepsilon}(r) \frac{Z_\infty}{\sqrt{2\pi}} \frac{\diff r}{r^{3/2}}
		= \frac{\gamma}{\sqrt{t}} \int_{1-\ep}^1 \rho(F_r) \frac{Z_\infty}{\sqrt{2\pi}} \frac{\diff r}{r^{3/2}},
	\]
	by applying \eqref{eq:formula R F^h} and a change of variable in the second integral on the left-hand side.
	
	We now aim at proving \eqref{eq:previous_statement}.
	Let $\ep > 0$. We apply the scheme of Section \ref{subsection:scheme_barrier} with $t_0 = (1-\ep)t$ and $\gamma$ as above.
	We decompose as in \eqref{eq:decompo-Z_t(F,gamma)}:
	\begin{equation} \label{ob}
	Z_t(F,\gamma) = 
	\widetilde{Z}_t^{(1-\ep) t,\gamma}(F,\gamma)
	+ \sum_{u\in\cL^{(1-\ep) t,\gamma}} \Omega^{(u)}_{t},
	\end{equation}
	where $\Omega^{(u)}_{t}$ is the contribution of the progeny of $u$ in $Z_t(F,\gamma)$ defined in \eqref{eq:def_Omega}.
	By \eqref{eq:property_F^h}, we have 
	\[
	\Ecsq{\widetilde{Z}_{t}^{(1-\ep) t,\gamma}(F,\gamma)}{\sF_{(1-\ep) t}} = Z_{(1-\ep)t}(F^{1-\ep},\gamma).
	\]
	Moreover, using branching property at time $(1-\ep)t$ as in \eqref{eq:conditional_variance_1} and applying Lemma \ref{lem:f_alpha}, we get
	\begin{align*}
	\Varsq{\widetilde{Z}_{t}^{(1-\ep) t,\gamma}(F,\gamma)}{\sF_{(1-\ep) t}}
	& \leq C \e^{-\gamma} \sum_{v \in \cN((1-\ep) t)} \e^{-X_v((1-\ep)t)}
		\left( 1 + \frac{X_v((1-\ep)t)}{\sqrt{\ep t}} g_\alpha(\ep t)
		\right),
	\end{align*}
	where $g_\alpha(s)$ equals 1 if $\alpha < 1$, $\log s$ if $\alpha =1$ and $s^{\alpha-1}$ if $\alpha > 1$.
	For fixed $\ep > 0$, this is a $O(\e^{-\gamma} g_\alpha(t) t^{-1/2})$ in probability as $t\to\infty$, using Lemma~\ref{lem:tightness}. 
	Since $b > (\alpha - 1) \vee 0$, this shows that the conditional variance is a $o(t^{-1})$ in probability and we conclude that
	\begin{equation} \label{oc}
	\sqrt{t}
	\left( \widetilde{Z}_{t}^{(1-\ep) t,\gamma}(F,\gamma) 
	- Z_{(1-\ep)t}(F^{1-\ep},\gamma) \right)
	\xrightarrow[t\to\infty]{(\P)} 0.
	\end{equation}

	We now have to estimate the contributions of killed particles and for this we proceed in a way similar to the proof of Lemma \ref{lem:convergence-of-fluctuations} to show the convergence in distribution as $t \to \infty$ and then we argue that this limit vanishes when $\ep \to 0$.
	First note, using \eqref{eq:local-min-of-the-BBM} and $b < 1$, that no particle is killed exactly at time $(1-\ep)t$ with high probability.
	Moreover, for any $p > 1$, it follows from the same argument as in \eqref{eq:killed_between_t_2_and_t} that, with high probability, no particle is killed during the time interval $[(1- \e^{-p\beta_t})t,t]$.
	Therefore, we can deal only with particles killed in the time interval $I \coloneqq ((1-\ep)t, (1- \e^{-p\beta_t})t)$.
	We fix $p \in (1,\frac{2}{\alpha} \wedge \frac{1}{b})$ for reasons that will be made clear later.

	Given $\sF_{\cL^{(1-\ep) t,\gamma}}$, the $(\Omega^{(u)}_t,Z^{(u)}_\infty)$ for $u \in \cL^{(1-\ep) t,\gamma}_{((1-\ep) t,t]}$ are independent with respectively the same distribution as $(\e^{-\gamma} Z_{t-T_u}(F_{T_u/t},0), Z_\infty)$ under $\P$, recalling from \eqref{eq:def_F_h} that $F_h(x) = F(\sqrt{1-h}\cdot x)$.
	Hence, we have, with $G(r) \coloneqq \rho(F_r)$ and $\chi(s) \coloneqq \E[(t^{-b} \lvert Z_{t-s}(F_{s/t},0) - G(s/t) Z_\infty \rvert) \wedge 1]$,
	\begin{equation} \label{oa}
	\E\Biggl[\Biggl( \sqrt{t}
	\sum_{u\in\cL^{(1-\ep) t,\gamma}_{I}} 
	\abs{\Omega^{(u)}_{t} - \e^{-\gamma} G(T_u/t) Z^{(u)}_\infty }
	\Biggr) \wedge 1
	\,\Bigg|\, \sF_{\cL^{(1-\ep) t,\gamma}} \Biggr] 
	\leq \sum_{u\in\cL^{(1-\ep) t,\gamma}_{I}} \chi(T_u).
	\end{equation}
	Since $F$ satisfies Assumptions \ref{ass1_alpha}-\ref{ass2_alpha}, the function $(1-h)^{\alpha/2} F_h$ satisfies Assumptions \ref{ass1_alpha}-\ref{ass2_alpha}.
	Thus, for any $s \leq (1- \e^{-p\beta_t})t$, noting that $t-s \geq t^{1-bp}$ and recalling that $1-bp > 0$ by our choice of $p$, we can apply Corollary~\ref{cor:rate_of_convergence_as_expectation_for bad_functions} with $\ep = t^{-b} (1-\frac{s}{t})^{-\alpha/2}$ (which satisfies $\ep \geq t^{-b} \geq (t-s)^{-K}$ with $K = b/(1-bp)$) to get $\chi(s) \leq (t-s)^{-1/12} \ep$.
	Hence, \eqref{oa} is smaller than
	\[
	t^{-(1-bp)/12} \cdot t^{-b}  
	\sum_{u\in\cL^{(1-\ep) t,\gamma}_{I}} 
	\left( 1-\frac{T_u}{t} \right)^{-\alpha/2}
	\xrightarrow[t\to\infty]{(\P)} 0,
	\]
	using Lemma \ref{lem:sums_along_stopping_line_2} and that $\alpha<2$ and $t^{-(1-bp)/12} \to 0$.
	Therefore, the left-hand side of \eqref{oa} tends to 0 in probability and, combining this with \eqref{ob} and \eqref{oc}, it is now sufficient to prove that
	\begin{equation}
	\sum_{u\in\cL^{(1-\ep) t,\gamma}_{I}} 
	\e^{-\beta_t} G(T_u/t) Z^{(u)}_\infty 
	- \beta_t \int_{1-\ep}^1 G(r) \frac{Z_\infty}{\sqrt{2\pi}} \frac{\diff r}{r^{3/2}} 
	\xrightarrow[t\to\infty,\, \ep\to 0]{(\P)} 0. \label{jja}
	\end{equation}
	Note that, using Assumption \ref{ass1_alpha} for $F$, we have, for any $r \in [0,1)$,
	\begin{equation} \label{eq:bound_rho(F_r)}
	\abs{G(r)} = \abs{\rho(F_r)}
	\leq (1-r)^{-\alpha/2} \int_0^\infty x^{-\alpha} \e^{\kappa \sqrt{1-r} x} \sqrt{\frac 2 \pi} x^2 e^{-x^2/2} \diff x
	\leq C(\alpha,\kappa) (1-r)^{-\alpha/2}.
	\end{equation}
	Therefore, it follows from Lemma \ref{lem:sums_along_stopping_line_2} applied to $\Upsilon = G \1_I$ that
	\[
	\beta_t \Biggl( 
	\sum_{u\in\cL^{(1-\ep) t,\gamma}_{I}} 
	\e^{-\beta_t} G(T_u/t) 
	- \int_{1-\ep}^1 G(r) \frac{Z_\infty}{\sqrt{2\pi}} \frac{\diff r}{r^{3/2}} \Biggr)
	\xrightarrow[t\to\infty]{(\P)} 0.
	\]
	Moreover, it follows from Corollary \ref{cor:convergence-of-fluctuations-2} (using here that $p< \alpha/2$) that
	\begin{equation} \label{jjb}
	\sum_{u\in\cL^{(1-\ep) t,\gamma}_{I}} 
	\e^{-\beta_t} G(T_u/t) 
	\left( Z^{(u)}_\infty - \beta_t \right)
	\xrightarrow[t\to\infty]{\text{(law)}} \int_{1-\ep}^1 G(r) M_{Z_\infty}(\diff r).
	\end{equation}
	Using that the function $\abs{G \log \abs{G}}$ is integrable at 1, a characteristic function calculation shows that the right-hand side of \eqref{jjb} tends to 0 in probability as $\ep \to 0$.
	Therefore, \eqref{jja} is proved and this concludes the proof.
\end{proof}

\subsection{Proof of Theorem~\ref{theorem-2}}
\label{subsection:proof_theorem2}

We prove here Theorem~\ref{theorem-2}: we use Proposition~\ref{prop:regularize} to replace the (possibly) diverging function~$F$ by the non-diverging function $F^{1-\ep}$ and then apply Proposition~\ref{prop:weaker_version_theorem_2} to $F^{1-\ep}$.

\begin{proof}[Proof of Theorem \ref{theorem-2}]
	The claim that $\rho(F_{\cdot/a})\in \mathcal G(r^{-3/2} \1_{1\leq r < a} \diff r)$ for all $a\in [1,\infty]$ follows from \eqref{eq:bound_rho(F_r)}. That bound also yields that $r \in [0,1) \mapsto \rho(F_r)$ is integrable.
	
	For the remainder of the proof, it is enough to prove the convergence with a single function $F$, for, if $F_1,\dots,F_n$ are functions as in the statement of the theorem, it is enough to apply the theorem to the function $F = \sum_{i=1}^n \xi_i F_i$ for any $\xi_1,\dots,\xi_n \in \R$ to deduce the joint convergence.	
	Also the quantities in \eqref{eq:process_at_t_2} and \eqref{eq:process_limit_2} are zero for $a=1$ so it is enough to consider $a \in (1,\infty]$.
	
	For any $a \in (1,\infty]$, for $\ep \in (0,1)$ small enough such that $(1-\ep)a>1$, we decompose the process into three parts:
	\begin{align}
	& \sqrt t \cdot \left( 
	Z_{at}(F) - Z_t(F^{1/a})
	+ \frac{\log t}{2 \sqrt{t}} 
	\int_1^a \rho(F_{r/a})\frac{Z_\infty}{\sqrt{2\pi}}  \frac{\diff r}{r^{3/2}}
	\right) \nonumber \\
	\begin{split}
	& = \sqrt t \cdot \biggl( 
	Z_{at}(F) - Z_{(1-\ep)at}(F^{1-\ep}) 
	+ \frac{\log (at)}{2\sqrt{at}} \int_{1-\ep}^1 \rho(F_r) \frac{Z_\infty}{\sqrt{2\pi}} \frac{\diff r}{r^{3/2}} \\
	& \qquad \qquad 
	+ Z_{(1-\ep)at}(F^{1-\ep}) - Z_t(F^{1/a}) 
	+ \frac{\log t}{2 \sqrt{t}} 
	\int_1^{(1-\ep)a} \rho(F_{r/a})\frac{Z_\infty}{\sqrt{2\pi}}  \frac{\diff r}{r^{3/2}} \\
	& \qquad \qquad 
	- \frac{\log a}{2\sqrt{at}} \int_{1-\ep}^1 \rho(F_r) \frac{Z_\infty}{\sqrt{2\pi}} \frac{\diff r}{r^{3/2}}
	\biggr).
	\end{split}
	 \label{eq:th2_decompo}
	\end{align}
	In the case $a=\infty$, we use the convention $\log a/\sqrt a = 0$ and so the first and third parts are simply~0, because $Z_{(1-\ep)at}(F^{1-\ep}) = \rho(F^{1-\ep}) Z_\infty = \rho(F) Z_\infty = Z_{at}(F)$ using \eqref{eq:Fh_and_rho}.
	For $a \in (1,\infty)$, the first part converges to 0 in probability as $t\to\infty$ and then $\ep\to0$ by Proposition~\ref{prop:regularize}.
	The third part also converges to 0 (even almost surely) as $t\to\infty$ and then $\ep\to0$ because $r \in [0,1) \mapsto \rho(F_r)$ is integrable.
	Therefore, it remains to prove that, conditionally on $\sF_t$, as $t\to\infty$ and then $\ep \to 0$, the finite-dimensional distributions of the second part seen as a process indexed by $a \in (1,\infty]$ converge weakly in probability to the finite-dimensional distributions, conditionally on $Z_\infty$, of \eqref{eq:process_limit_2}.
	
	Fix $\varepsilon \in (0,1)$. For $a\in[1/(1-\ep),\infty]$, note that $F^{1/a} = (F^{1-\ep})^{1/((1-\ep)a)}$ by \eqref{eq:Fh_semi_group} and 
	$\rho(F_{r/a}) = \rho((F^{1-\ep})_{r/((1-\ep)a)})$ by \eqref{eq:Fh_and_rho}.
	Therefore, by Proposition~\ref{prop:weaker_version_theorem_2} applied to the function $F^{1-\ep}$ (it satisfies the assumptions by Lemma~\ref{lem:Fh}), conditionally on $\sF_t$, as $t\to\infty$,	
	the finite-dimensional distributions of
	\begin{equation*}
	\sqrt t \cdot \left( 
	Z_{(1-\ep)at}(F^{1-\ep}) - Z_t(F^{1/a}) 
	+ \frac{\log t}{2 \sqrt{t}} 
	\int_1^{(1-\ep)a} \rho(F_{r/a})\frac{Z_\infty}{\sqrt{2\pi}}  \frac{\diff r}{r^{3/2}}
	\right)_{a\in[1/(1-\ep),\infty]}
	\end{equation*}
	converge weakly in probability to the finite-dimensional distributions, conditionally on $Z_\infty$, of
	\begin{equation} \label{eq:process_limit_4}
	\left( \int_1^{(1-\ep)a} \rho(F_{r/a}) M_{Z_\infty}(\diff r)\right)_{a\in[1/(1-\ep),\infty]}.
	\end{equation}		
	Then, an explicit characteristic function calculation, together with the fact that $\rho(F_{\cdot/a})\in \mathcal G(r^{-3/2} \1_{1\leq r < a} \diff r)$, shows that the finite-dimensional distributions, conditionally on $Z_\infty$, of \eqref{eq:process_limit_4} converge weakly in probability to the finite-dimensional distributions, conditionally on $Z_\infty$, of \eqref{eq:process_limit_2}. This concludes the proof.
\end{proof}

\subsection{Proof of Theorem~\ref{theorem-complete}}
\label{subsection:proof_theorem1}

In this section, we prove Theorem \ref{theorem-complete} as a byproduct of Theorem~\ref{theorem-2} and Proposition~\ref{prop:convergence-with-translation-gamma_t}. 

\begin{proof}[Proof of Theorem~\ref{theorem-complete}]
	As observed in the proof of Theorem \ref{theorem-2}, it is enough to prove the convergence with a single function $F$ and, as observed in the proof of Proposition~\ref{prop:regularize}, we can assume that $\alpha>0$ and that $F$ satisfies Assumptions \ref{ass1_alpha}-\ref{ass2_alpha}-\ref{ass3_alpha} for some $\kappa$ large enough.
	Recall $\mathscr{R}F(r) = \rho(F_r) \1_{r<1} - \rho(F)$, which is therefore a finite constant for $r\geq 1$.
	By \eqref{eq:bound_rho(F_r)}, we have $\mathscr{R}F(r) \leq C(1-r)^{-\alpha/2}$ for $r \in [0,1)$.
	Moreover, for $r \in [0,1/2]$, using Assumption~\ref{ass2_alpha}, for any $x >0$, 
	\[
	\abs{F(x \sqrt{1-r}) - F(x)}
	\leq \int_{x \sqrt{1-r}}^x y^{-\alpha-1} \e^{\kappa x} \diff y
	\leq C(\alpha) r x^{-\alpha} \e^{\kappa x},
	\]
	which then implies that $\mathscr{R}F(r) \leq C r$.
	Combining all these previous bounds shows that $\mathscr R F(\cdot/a)\in \mathcal G(r^{-3/2}\diff r)$ for all $a\in(0,\infty)$. 
	We now aim at proving the claimed convergence.
	
	First fix $\ep>0$.
	Theorem~\ref{theorem-2} applied with $\ep t$ instead of $t$, together with the scaling property of the 1-stable noise $M_{Z_\infty}$ (see Remark~\ref{rem:scaling}) yields after a few lines of calculation the following result: conditioned on $\sF_{\ep t}$, the finite-dimensional distributions of the process
	\begin{align}
	\label{eq:process_3}
	\sqrt t \cdot \left(Z_{at}(F)-Z_{\ep t}(F^{\ep/a}) + \frac{\log t}{2\sqrt t}\int_\ep^a\rho(F_{r/a})\frac{Z_\infty}{\sqrt{2\pi}}\frac{\diff r}{r^{3/2}}\right)_{a\in[\ep,\infty]}
	\end{align}
	converge weakly in probability as $t\to\infty$ to the finite-dimensional distributions, conditioned on $Z_\infty$, of
	\begin{align}
	\label{eq:process_limit_3}
	\left(\int_\ep^a\rho(F_{r/a})M_{Z_\infty}(\diff r)\right)_{a\in[\ep,\infty]}.
	\end{align}
	We now decompose the process \eqref{eq:process_at_t} for $a\in[\ep,\infty)$ into three parts as follows, using \eqref{eq:R F with F_h}:
	\begin{align}
	\sqrt t \cdot \Bigg( 
	Z_{at}(F) - \rho(F) Z_\infty 
	&+ \frac{\log t}{2 \sqrt{t}} 
	\int_0^\infty \mathscr R F\left( \frac{r}{a} \right) \frac{Z_\infty}{\sqrt{2\pi}} \frac{\diff r}{r^{3/2}}\Bigg)\nonumber\\
	\begin{split}
	= \sqrt t\cdot \Bigg(&Z_{at}(F)-Z_{\ep t}(F^{\ep/a}) + \frac{\log t}{2\sqrt t} \int_\ep^a \rho(F_{r/a}) \frac{Z_\infty}{\sqrt{2\pi}} \frac{\diff r}{r^{3/2}} \\
	&+ \rho(F) Z_{\ep t} - \rho(F)Z_\infty - \frac{\log t}{2\sqrt t}\int_\ep^\infty \rho(F) \frac{Z_\infty}{\sqrt{2\pi}} \frac{\diff r}{r^{3/2}} \\
	&+ Z_{\ep t}(F^{\ep/a})-\rho(F)Z_{\ep t} + \frac{\log t}{2\sqrt t} \int_0^\ep \mathscr R F\left(\frac r a\right) \frac{Z_\infty}{\sqrt{2\pi}} \frac{\diff r}{r^{3/2}}\Bigg).	
	\end{split}
	\label{eq:process_decomposition_3_parts}
	\end{align}
	The first part is exactly \eqref{eq:process_3}. The second part is also \eqref{eq:process_3}, with $a = \infty$ and a minus sign. It follows that the finite-dimensional distributions of the sum of the first two parts of \eqref{eq:process_decomposition_3_parts}, conditioned on $\sF_{\ep t}$, converge weakly in probability to the finite-dimensional distributions, conditioned on $Z_\infty$, of
	\begin{align*}
	\left(\int_\ep^\infty \mathscr R F\left(\frac r a\right) M_{Z_\infty}(\diff r)\right)_{a\in[\ep,\infty)},
	\end{align*}
	using \eqref{eq:R F with F_h} again. As $\ep\to 0$, these converge (conditioned on $Z_\infty$) to the finite-dimensional distributions of \eqref{eq:process_limit}, using that $\mathscr R F(\cdot/a)\in \mathcal G(r^{-3/2}\diff r)$ to justify the convergence. Hence, it suffices to show that the third part in \eqref{eq:process_decomposition_3_parts} converges to 0 in probability as $t\to\infty$, then $\ep\to 0$. 
	
	Without loss of generality, we may assume $a=1$. We wish to use Proposition \ref{prop:convergence-with-translation-gamma_t}, but first have to introduce a shift. Define $\gamma = \frac 1 2 \log t + \beta_t$, with $\beta_t\to\infty$ and $\beta_t = o(\log t)$ as $t\to\infty$. 
	Note that, by Lemma~\ref{lem:Fh}, up to dividing it by a constant possibly depending on $\varepsilon$, the function $F^\varepsilon$ satisfies Assumptions \ref{ass1}-\ref{ass2} and \hyperref[ass1_alpha]{(H1$_{0,\kappa}$)}-\hyperref[ass2_alpha]{(H2$_{0,\kappa}$)}-\hyperref[ass3_alpha]{(H3$_{0,\kappa}$)} for $\kappa$ large enough.
	By Proposition~\ref{prop:from-translation-gamma_t-to-0} and \eqref{eq:more_RFh}, in order to show that the third part of \eqref{eq:process_decomposition_3_parts} converges to 0 in probability, it is enough to show that, as $t\to\infty$ and then $\ep\to0$,
	\begin{equation}
	\label{Fepstoshow}
	\sqrt t\cdot \left(Z_{\ep t}(F^{\ep},\gamma) - \rho(F)Z_{\ep t}(1,\gamma) - \frac{\beta_t}{\sqrt t} \int_0^\ep \mathscr R F(r) \frac{Z_\infty}{\sqrt{2\pi}} \frac{\diff r}{r^{3/2}} \right)\xrightarrow{(\P)} 0.
	\end{equation}
	By Proposition \ref{prop:convergence-with-translation-gamma_t} applied at time $\varepsilon t$ with $\gamma = \frac{1}{2} \log (\ep t) + (\beta_t - \frac{1}{2} \log \ep)$, we have for fixed $\ep\in(0,1)$,
	\begin{align}
	\label{eq:convergence from prop 2.2}
	& \sqrt {\ep t}\cdot \left(Z_{\ep t}(F^{\ep},\gamma) - \rho(F)Z_{\ep t}(1,\gamma) - \frac{\beta_t - \frac{1}{2} \log \varepsilon}{\sqrt {\ep t}} \int_0^1 \mathscr R F^\ep(r) \frac{Z_\infty}{\sqrt{2\pi}} \frac{\diff r}{r^{3/2}} \right) \nonumber \\
	& \xrightarrow[t\to\infty]{\text{(law)}} 
	\int_0^1 \mathscr R F^\ep(r)M_{Z_\infty}(\diff r).
	\end{align}
	Using \eqref{eq:formula R F^h r<1} and the scaling properties of the 1-stable noise (Remark~\ref{rem:scaling}), we can rewrite \eqref{eq:convergence from prop 2.2} as follows
	\begin{equation} \label{eq:convergence from prop 2.2 bis}
	\sqrt {t}\cdot \left(Z_{\ep t}(F^{\ep},\gamma) - \rho(F)Z_{\ep t}(1,\gamma) - \frac{\beta_t}{\sqrt {t}} \int_0^\ep \mathscr R F(r) \frac{Z_\infty}{\sqrt{2\pi}} \frac{\diff r}{r^{3/2}} \right)
	\xrightarrow[t\to\infty]{\text{(law)}} 
	\int_0^\ep \mathscr R F(r)M_{Z_\infty}(\diff r).
	\end{equation}
	As $\ep\to0$, the right-hand side of \eqref{eq:convergence from prop 2.2 bis} converges to 0 in probability,
	using again that $\abs{\mathscr R F(u)} \leq Cu$ for $u\in[0,1]$ by \eqref{eq:RF_bound}. This proves \eqref{Fepstoshow} and finishes the proof of the theorem.
\end{proof}

\appendix

\section{Some calculations regarding the 3-dimensional Bessel process}
\label{section:technical-results}

Let $(R_s)_{s\geq 0}$ denote a 3-dimensional Bessel process starting from $x$ under $\P_x$. 
Recall the following link between the 3-dimensional Bessel process and the Brownian motion (see Imhof \cite{imhof84}): for any $t \geq 0$, any $x > 0$ and any measurable function $\Theta \colon \cC([0,t]) \to \R_+$,
\begin{equation} \label{eq:link-between-R-and-B}
	\Eci{x}{\Theta(B_s,s\in [0,t]) \1_{\forall s \in [0,t], B_s > 0}}
	=
	\Eci{x}{\frac{x}{R_s} \Theta(R_s,s\in [0,t])}.
\end{equation}
In this subsection, we bound the expectations of some functions of $R_1$.
First recall that the density of $R_1$ under $\P = \P_0$ is
\begin{equation} \label{eq:density-of-R_1-under-P}
	z \mapsto \sqrt{\frac{2}{\pi}} z^2 \e^{-z^2/2} \1_{z>0},
\end{equation}
and the density of $R_1$ under $\P_y$, for $y>0$, is
\begin{equation} \label{eq:density-of-R_1-under-P_x}
	z \mapsto 
	\frac{z}{y} \frac{\e^{-(z-y)^2/2}}{\sqrt{2\pi}} 
	\left( 1 - \e^{-2zy} \right) \1_{z>0}
	= \sqrt{\frac{2}{\pi}} \frac{z}{y} \e^{-z^2/2}
	\e^{-y^2/2} \sinh(zy) \1_{z>0}.
\end{equation}
Note that, for any $\lambda \ge 0$ and $\alpha \in [0,3)$, 
$\E[(R_1)^{-\alpha} \e^{\lambda R_1}]$ is finite.
\begin{lem} \label{lem:control-E_y[R_1^-alpha exp(lambdaR_1)]}
	Let $\lambda \ge 0$ and $\alpha \in [0,3)$. There exists $C=C(\lambda,\alpha) >0$ such that, for any $y \ge 0$, we have
		\begin{equation*} 
		\Eci{y}{ (R_1)^{-\alpha} \e^{\lambda R_1}}
		\leq C (y^{-\alpha}\wedge 1)\e^{\lambda y},
	\end{equation*}
	and
	\begin{equation*} 
		\Eci{y}{ \left((R_1)^{-\alpha}+1 \right) \e^{\lambda R_1}}
		\leq C \e^{\lambda y}.
	\end{equation*}
\end{lem}
\begin{proof}
	The second inequality follows by applying the first one twice, once with the same $\alpha$ and once with $\alpha = 0$. It therefore suffices to prove the first inequality.
	
	We first treat the case $y\le 1$. 
	Distinguishing whether $R_1 < 1$ or $R_1 \geq 1$, 
	\[
	(R_1)^{-\alpha} \e^{\lambda R_1}
	\leq (R_1)^{-\alpha} \e^{\lambda} + \e^{\lambda R_1}.
	\]
	By monotonicity of the Bessel process w.r.t.\@ the initial condition, we have $$\Eci{y}{(R_1)^{-\alpha}} \leq \Ec{(R_1)^{-\alpha}} < \infty,$$ and $$\E_y[\e^{\lambda R_1}] \leq \E_1[\e^{\lambda R_1}] < \infty,$$ which yields the result. 
	
	Suppose now that $y\ge 1$.
	Using \eqref{eq:density-of-R_1-under-P_x} and $1 - \e^{-2zy} \leq (2zy) \wedge 1$, we have (bounding the constants by 1),
		\begin{align}
			\label{eq:R_1_integral_decomp_y}
		\Eci{y}{(R_1)^{-\alpha}\e^{\lambda R_1}}
		& \leq \int_0^{y/2} z^{2-\alpha} e^{\lambda z}
		\e^{-(z-y)^2/2}  \diff z
		+ \int_{y/2}^\infty \frac{z^{1-\alpha}}{y} e^{\lambda z} \e^{-(z-y)^2/2} \diff z \eqqcolon I_1+I_2.
		\end{align}
		In the remainder of the proof, let $C$ denote a constant that may depend on $\lambda$ and $\alpha$ and whose value may change from line to line.
		The first term in \eqref{eq:R_1_integral_decomp_y} is bounded by
		\begin{align}
			\label{eq:I_1_bound}
			I_1 \le e^{\lambda y/2-y^2/8}\int_0^{y/2} z^{2-\alpha}\diff z \le C y^{-\alpha}.
		\end{align}
		For the second term, we have
		\begin{align*}
		I_2 & \leq 2^{\alpha}y^{-\alpha} \frac 1 y \int_{y/2}^\infty z e^{\lambda z} \e^{-(z-y)^2/2} \diff z \\
		& \le 2^\alpha  \e^{\lambda^2/2} y^{-\alpha} \e^{\lambda y} \frac 1 y \int_{-\infty}^\infty (y+t+\lambda)_+ \e^{-t^2/2} \diff t.
	\end{align*}
	Bounding $(y+t+\lambda)_+ \le y + (t+\lambda)_+$, we obtain,
	\begin{align}
				\label{eq:I_2_bound}
		I_2 \le C y^{-\alpha}e^{\lambda y} \frac{1+y}{y} \le 2 Cy^{-\alpha}e^{\lambda y},
	\end{align}
	where we finally used that $y\ge 1$ in the last inequality. Collecting \eqref{eq:R_1_integral_decomp_y}, \eqref{eq:I_1_bound} and \eqref{eq:I_2_bound}, yields the result in the case $y\ge 1$ and finishes the proof of the lemma.
\end{proof}
\begin{lem} \label{lem:control-eta-small}
	Let $\kappa \geq 0$. There is a constant $C = C(\kappa) > 0$ such that for any function $F \colon \R \to \R$ satisfying \ref{ass1}-\ref{ass2} and any $y \geq 0$, $\alpha \in (0,1]$ and $\eta \geq 0$, we have
	\[
	\abs{\Eci{y}{\frac{(R_1-\eta)_+}{R_1} F(\alpha(R_1-\eta))}
		- \rho(F)} 
	\leq C \left((y^2\wedge 1) \e^{\kappa \alpha y} + \eta + (1-\alpha) \right).
	\]
\end{lem}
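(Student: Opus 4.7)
The plan is to express both quantities as integrals against the density of $R_1$ under $\P_y$ and use the multiplicative factorization
\[
p_y(z) = p_0(z) \cdot \psi(y,z), \qquad \psi(y,z) \coloneqq e^{-y^2/2} \sinhc(zy),
\]
where $p_y$ is the density of $R_1$ under $\P_y$ and $p_0(z) = \sqrt{2/\pi}\, z^2 e^{-z^2/2}$; this follows from \eqref{eq:density-of-R_1-under-P_x} and \eqref{eq:density-of-R_1-under-P}. Denoting by $A$ the left-hand expectation and by $B = \E[F(R_1)]$, I would decompose
\[
A - B = \underbrace{\int_0^\infty \frac{(z-\eta)_+}{z}\, F(\alpha(z-\eta)) \bigl(\psi(y,z) - 1\bigr) p_0(z) \diff z}_{I_1} + \underbrace{\int_0^\infty \Bigl( \frac{(z-\eta)_+}{z}\, F(\alpha(z-\eta)) - F(z)\Bigr) p_0(z) \diff z}_{I_2},
\]
so that $I_1$ isolates the effect of shifting the starting point of the Bessel process from $0$ to $y$, while $I_2$ isolates the effects of the shift $\eta$ and of the rescaling $\alpha$.

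The integral $I_2$ is handled by a direct triangle inequality. On $\{z \geq \eta\}$, the integrand splits as $\frac{(z-\eta)_+}{z}[F(\alpha(z-\eta)) - F(z)] - \frac{\eta}{z} F(z)$; the first piece is bounded by $(z(1-\alpha) + \alpha\eta) e^{\kappa z}$ via Assumption \ref{ass2} (applicable since $\alpha(z-\eta) \leq z$) and the second by $(\eta/z) e^{\kappa z}$ via Assumption \ref{ass1}. The region $\{z < \eta\}$ is treated separately using $\abs{F(z)} \leq e^{\kappa z}$. Integration against $p_0(z)$ via standard Gaussian moment computations then yields $\abs{I_2} \leq C(\kappa)(\eta + (1-\alpha))$.

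The heart of the proof is the bound on $I_1$, and the key observation is the pointwise estimate
\[
\abs{\psi(y,z) - 1} \leq \frac{y^2}{2} + \frac{(zy)^2}{6}\, e^{zy - y^2/2},
\]
obtained from the decomposition $\psi(y,z) - 1 = (e^{-y^2/2} - 1) + e^{-y^2/2}(\sinhc(zy) - 1)$, combined with $\abs{e^{-y^2/2}-1} \leq y^2/2$ and the elementary inequality $\sinhc(u) - 1 \leq \frac{u^2}{6}\cosh(u)$ for $u \geq 0$ (verified by a term-by-term comparison of the power series of $\sinhc$ and $\cosh$). The factor $e^{-y^2/2}$ in the second term is essential: substituting the bound into $I_1$ and using $\abs{(z-\eta)_+ F(\alpha(z-\eta))/z} \leq e^{\kappa\alpha z}$ from Assumption \ref{ass1}, one is led to a Gaussian integral of the form $\int_0^\infty z^4 e^{-z^2/2 + z(\kappa\alpha+y)}\diff z \leq C(\kappa)\, e^{(\kappa\alpha+y)^2/2}(1+y^4)$, and the prefactor $e^{-y^2/2}$ precisely cancels the $e^{y^2/2}$ produced by completing the square, leaving an exponent $\kappa\alpha y + \frac{(\kappa\alpha)^2}{2}$ and a polynomial factor $(1+y^4)$. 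This yields $\abs{I_1} \leq C(\kappa)\, y^2 (1+y^4) e^{\kappa\alpha y}$, and combining with the bound on $\abs{I_2}$ gives the claimed estimate.

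The main obstacle is identifying the right pointwise estimate for $\psi(y,z) - 1$. A naive second-order Taylor expansion in $y$ only gives $\abs{\psi(y,z) - 1} \leq C y^2 (1+z^2) e^{zy}$, which after completing the square in the Gaussian integral produces an extra and fatal $e^{y^2/2}$ factor. The decomposition above, which extracts the Gaussian prefactor $e^{-y^2/2}$ explicitly \emph{before} estimating the $\sinhc$ part, is exactly what is needed to avoid this factor.
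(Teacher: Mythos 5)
Your proof is correct and follows essentially the same route as the paper's: both decompose the left-hand side into the effect of the starting-point shift (your $I_1$, the paper's first step, with the crucial pointwise bound on the density ratio $\psi(y,z)-1$ that keeps the $e^{-y^2/2}$ prefactor explicit so that completing the square does not produce a stray $e^{y^2/2}$) plus the effect of the $\eta$-shift and $\alpha$-rescaling under $\P_0$. The only cosmetic difference is that you handle the $\eta$ and $\alpha$ perturbations in a single triangle inequality ($I_2$), whereas the paper splits them into two separate steps.
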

\begin{proof}
	We prove the inequality in three steps.
		The first step is to prove that
	\begin{align}
		\abs{
			\Eci{y}{\frac{(R_1-\eta)_+}{R_1} F(\alpha(R_1-\eta))}
			- \Ec{\frac{(R_1-\eta)_+}{R_1} F(\alpha(R_1-\eta))}
		} 
		& \leq C (y^2\wedge 1) \e^{\kappa\alpha y}. \label{le}
	\end{align}
	If $y\ge 1$, this follows from Lemma~\ref{lem:control-E_y[R_1^-alpha exp(lambdaR_1)]}, using Assumption~\ref{ass1}. It remains to consider $y < 1$.
	By \eqref{eq:density-of-R_1-under-P} and \eqref{eq:density-of-R_1-under-P_x}, the left-hand side of \eqref{le} equals
	\begin{align}
		\abs{\int_0^\infty 
			\frac{(z-\eta)_+}{z} F(\alpha(z-\eta))
			\sqrt{\frac{2}{\pi}} \frac{z}{y} \e^{-z^2/2}
			\left( \e^{-y^2/2} \sinh(zy) - zy \right) \diff z}. \label{ld}
	\end{align}
	But, for $y\in[0,1)$ and $z\ge 0$, we have
	\begin{align*}
		\abs{\e^{-y^2/2} \sinh(zy) - zy}
		& \leq \e^{-y^2/2}\abs{\sinh(zy) - zy} + \abs{\e^{-y^2/2}-1} zy\\
		&\leq \sum_{k \geq 1} \frac{(zy)^{2k+1}}{(2k+1)!} 
		+ \frac{z y^3}{2} \\
		& \leq (zy)^3 \sum_{k \geq 0} \frac{z^{2k}}{(2k+3)!} + z y^3\\
		&\leq (z^3 \e^{z} + z)y^3 
	\end{align*}
	Then, using Assumption \ref{ass1} (because of the factor $(z-\eta)_+$, $F$ is only evaluated at positive points) and that $\eta \geq 0$, \eqref{ld} is smaller than
	\begin{align*}
		C\left(
		\int_0^\infty 
		\e^{\kappa z} 
		\e^{-z^2/2}
		\left( z^2+ z^4 \e^{z} \right)\diff z\right) y^2,
	\end{align*}
	and this proves \eqref{le}.
	The second step consists in proving that
	\begin{equation} \label{lf}
		\Ec{\abs{
				\frac{(R_1-\eta)_+}{R_1} F(\alpha (R_1-\eta))
				- F(\alpha R_1) 
		}} 
		\leq C \eta. 
	\end{equation}
	This follows from the fact that for any $x > 0$, by Assumptions \ref{ass1}-\ref{ass2}, we have
	\begin{align}
		\abs{(x-\eta)_+ F(\alpha (x-\eta)) - x F(\alpha x)}
		& \leq (x-\eta)_+  \abs{F(\alpha (x-\eta)) - F(\alpha x)} 
		+ \abs{F(\alpha x)} \cdot \abs{(x-\eta)_+ - x} \nonumber \\
		& \leq (x \alpha+1) \eta \e^{\kappa \alpha x}, \label{eq:bound_Lipschitz}
	\end{align}
	whence,
	\[
	\Ec{\abs{
			\frac{(R_1-\eta)_+}{R_1} F(\alpha (R_1-\eta))
			- F(\alpha R_1) 
	}} \le \Ec{(1+R_1^{-1})e^{\kappa R_1}} \eta.
	\]
 This proves \eqref{lf}, since the last expectation is finite.
	Finally, the third step consists in the bound $\abs{\E[F(\alpha R_1)] - \rho(F)} \leq C (1-\alpha)$, which follows easily from Assumption \ref{ass2}.
	The result follows from this combined with \eqref{le} and \eqref{lf}.
\end{proof}

\begin{lem} \label{lem:Fh}
	Let $\alpha \in [0,2)$ and $\kappa \geq 0$.
	Let $F\colon \R \to \R$ be a function satisfying Assumption \ref{ass1_alpha}. 
	Let $h \in [0,1)$ and recall $F^h$ is defined in \eqref{eq:def F^h}.
	Then, $F^h$ is twice differentiable on $(0,\infty)$ and there exists $C = C(\alpha,\kappa,h)>0$ such that $(F^h)''(x) \leq C \e^{C x}$ for any $x>0$.
\end{lem}

\begin{proof}
	For $y \in \R^*$, we write $\sinhc(y) \coloneqq \sinh(y)/y$, which is extended at 0 by continuity into an analytic function on $\R$.
	First recalling the definition of $F^h$ and then using the density of $R_1$ given in \eqref{eq:density-of-R_1-under-P_x}, we get, for any $x>0$
	\begin{align}
		F^h(x) & = \Eci{x\sqrt{h/(1-h)}}{F(R_1 \sqrt{1-h} )} \nonumber \\
		& = \sqrt{\frac{2}{\pi}} \e^{-x^2 h/[2(1-h)]} 
		\int_0^\infty z^2 \e^{-z^2/2}
		\sinhc\left(\frac{zx \sqrt{h}}{\sqrt{1-h}} \right) 
		F\left(z\sqrt{1-h}\right) \diff z \nonumber \\
		& \eqqcolon \sqrt{\frac{2}{\pi}} \e^{-x^2 h/[2(1-h)]}  G(x).
		\label{eq:Fh_and_G}
	\end{align}
	A power series expansion shows that, for $k\in\{0,1,2\}$ and any $y>0$, $\sinhc^{(k)}(y) \leq \e^y$. This domination, together with the fact that $F$ satisfies Assumption \ref{ass1_alpha}, is enough to prove that $G$ is twice differentiable on $(0,\infty)$, and we get, for $k\in\{0,1,2\}$ and $x>0$
	\begin{align*}
	\abs{G^{(k)}(x)} 
	& \leq \frac{h^{k/2}}{(1-h)^{(k+\alpha)/2}} 
	\int_0^\infty z^{2+k-\alpha} \e^{-z^2/2}
	\exp\left(\frac{zx \sqrt{h}}{\sqrt{1-h}} + \kappa z \sqrt{1-h} \right) 
	\diff z.
	\end{align*}
	Using that $z^{2+k-\alpha} \e^{\kappa z \sqrt{1-h}} \leq C \e^{(\kappa+1) z}$ and then the formula for the Laplace transform of the normal distribution, we get
	\begin{align*}
	\abs{G^{(k)}(x)} 
	& \leq C \exp\left( \frac{1}{2} \left( \frac{x \sqrt{h}}{\sqrt{1-h}} + \kappa + 1\right)^2 \right)
	\leq C \exp\left( \frac{x^2 h}{2(1-h)} + Cx \right).
	\end{align*}
	Coming back to \eqref{eq:Fh_and_G}, this yields the result.
\end{proof}

\section{Two constants depending of \texorpdfstring{$F$}{f} are identical}
\label{section:two-constants-are-identical}

We prove here the relation \eqref{eq:two-constants-are-identical}. 
It could actually be deduced from the rest of our proof: if it was wrong, the $(\log t)/\sqrt{t}$ corrective term appearing in Theorem \ref{theorem-complete} would depend on the choice of $\beta_t$, which is not possible since $Z_{at}(F) - \rho(F)$ does not depend on $\beta_t$. However, we provide a direct proof here.

\begin{proof}[Proof of Equation \eqref{eq:two-constants-are-identical}]
Recall $\mathscr R F(u) = \Ec{F(\sqrt{1-u} R_1) \1_{u<1} - F(R_1)}$ for $u \geq 0$.
Hence, we have $\int_1^\infty \mathscr R F(u) \frac{\diff u}{u^{3/2}} = -2 \rho(F)$ and we now focus on the integral from 0 to 1.
We first introduce artificially a factor $\sqrt{1-u}$ and integrate by part
\begin{align*}
\int_0^1 \mathscr R F(u) \frac{\diff u}{u^{3/2}}
& = \int_0^1 \sqrt{1-u} \mathscr R F(u) 
	\frac{\diff u}{u^{3/2}\sqrt{1-u}} \\
& = \left[ \sqrt{1-u} \mathscr R F(u) 
	\left( \frac{-2\sqrt{1-u}}{\sqrt{u}} \right) \right]_0^1 \\
	&\quad+ \int_0^1 \left( 
	\frac{-\mathscr R F(u)}{2\sqrt{1-u}} 
	+ \sqrt{1-u}\, (\mathscr R F)'(u) 
	\right) 
	\frac{2\sqrt{1-u}}{\sqrt{u}} \diff u.
\end{align*}
We have $(1-u) \mathscr R F(u) \to 0$ as $u \to 1$ and $\mathscr R F(u)/\sqrt{u} \to 0$ as $u \to 0$ as consequences of Assumptions \ref{ass1_alpha}-\ref{ass2_alpha},
so the boundary terms are zero and we get
\begin{align*}
\int_0^1 \mathscr R F(u) \frac{\diff u}{u^{3/2}}
& = - \int_0^1 \Ec{F(\sqrt{1-u} R_1) + \sqrt{1-u} R_1 F'(\sqrt{1-u} R_1)} \frac{\diff u}{\sqrt{u}} 
+ \int_0^1 \frac{\rho(F)}{\sqrt{u}} \diff u.
\end{align*}
The second term equals $2 \rho(F)$, so we focus on the first one. By \eqref{eq:density-of-R_1-under-P}, it equals
\begin{align*}
& - \int_0^1 \int_0^\infty 
\left( F(\sqrt{1-u} z) + \sqrt{1-u} z F'(\sqrt{1-u} z) \right)
\sqrt{\frac{2}{\pi}} z^2 \e^{-z^2/2} \diff z
\frac{\diff u}{\sqrt{u}}  \\
& = - \int_0^\infty \sqrt{\frac{2}{\pi}} y^2 \e^{-y^2/2}
\left( F(y) + y F'(y) \right)
\left(
\int_0^1 \e^{-uy^2/2(1-u)} \frac{\diff u}{(1-u)^{3/2} \sqrt{u}} 
\right)
\diff y,
\end{align*}
with the change of variables $y = \sqrt{1-u} z$.
Then, with $v = y \sqrt{u/1-u} \Leftrightarrow u = v^2/(y^2+v^2)$, we have
\[
\int_0^1 \e^{-uy^2/2(1-u)} \frac{\diff u}{(1-u)^{3/2} \sqrt{u}} 
= \int_0^\infty \e^{-v^2/2} \frac{2}{y} \diff v
= \frac{\sqrt{2\pi}}{y}.
\]
Thus, we finally get
\begin{align*}
\int_0^1 \mathscr R F(u) \frac{\diff u}{u^{3/2}}
& = 2 \rho(F) - \int_0^\infty \sqrt{\frac{2}{\pi}} y^2 \e^{-y^2/2}
\left( F(y) + y F'(y) \right)
\frac{\sqrt{2\pi}}{y}
\diff y \\
& = 2 \rho(F) - \sqrt{2\pi} \Ec{F'(R_1) + \frac{F(R_1)}{R_1}}
\end{align*}
and the result follows.
\end{proof}

\section{Parameters of the limit in the case of the additive martingale}
\label{app:additive_martingale}

We compute here the values of $\sigma_F$, $\beta_F$ and $\mu_F$ defined in Remark~\ref{rem:one-dimensional}, in the case where 
\[
\mathscr R F(r) = \sqrt{\frac 2 \pi} \left(\frac{\1_{r<1}}{\sqrt{1-r}}-1\right).
\]
We have
\begin{align*}
\sigma_F 
&= \int_0^\infty \abs{\mathscr R F(r)} \frac{\sqrt \pi}{2\sqrt 2} \frac{\diff r}{r^{3/2}}\\
&= \int_0^1 \left(\frac{1}{\sqrt{1-r}}-1\right) \frac{\diff r}{2r^{3/2}}
+ \int_1^\infty \frac{\diff r}{2r^{3/2}}\\
&= \left[ \frac{r-1+\sqrt{r-1}}{\sqrt{r}\sqrt{r-1}} \right]_0^1 + 1 \\
&= 2.
\end{align*}
Similarly, we get $\int_0^\infty \mathscr R F(r) r^{-3/2}\diff r=0$ and therefore $\beta_F=0$.
Finally, 
\begin{align*}
\mu_F 
& = \int_0^\infty \mathscr R F(r)(\log\abs{\mathscr R F(r)}+\mu_Z) \frac{1}{\sqrt{2\pi}} \frac{\diff r}{r^{3/2}} \\
& =\left( \log \sqrt{\frac 2 \pi}  + \mu_Z \right)  \int_0^\infty \mathscr R F(r) \frac{1}{\sqrt{2\pi}} \frac{\diff r}{r^{3/2}}
+ \int_0^1 \mathscr R F(r) \log \left(\frac{1}{\sqrt{1-r}}-1\right) \frac{1}{\sqrt{2\pi}} \frac{\diff r}{r^{3/2}} \\
& = 0 + \frac{1}{\pi} \int_0^1 \left(\frac{1}{\sqrt{1-r}}-1\right) \log \left(\frac{1}{\sqrt{1-r}}-1\right) \frac{\diff r}{r^{3/2}} \\
& = \frac{2}{\pi} \int_0^\infty \frac{\log u}{\sqrt{u} (2+u)^{3/2}} \diff u,
\end{align*}
with the change of variables $u= \frac{1}{\sqrt{1-r}}-1$, and the last integral can be calculated to get
\begin{align*}
\mu_F & = \frac{2}{\pi} \left[ \sqrt{\frac{u}{2+u}} \log u - \log \left( 1 + \sqrt{\frac{u}{2+u}} \right) + \log \left( 1 - \sqrt{\frac{u}{2+u}} \right) \right]_0^\infty 
= - \frac{2\log 2}{\pi}.
\end{align*}

\addcontentsline{toc}{section}{References}
\bibliographystyle{abbrv}
\bibliography{biblio}

\end{document}